\newtheorem{lemma}{Lemma}[section]
\newtheorem{theorem}{Theorem}[section]
\newtheorem{remark}{Remark}
\def\rr{\mathbb{R}}
\def\eps{\varepsilon}
\def\rhon{{\rho_n}}
\def\mf{\mathcal{F}}
\def\what{\widehat}
\def\la{\lambda}
\title[Nonlocal convection-diffusion]{ A compactness tool for the analysis of nonlocal evolution equations}
\author[L. I. Ignat ]{Liviu I. Ignat, Tatiana I. Ignat, Denisa Stancu-Dumitru}
\address{L. I. Ignat
\hfill\break\indent Institute of Mathematics ``Simion Stoilow'' of the Romanian Academy\\
\hfill\break\indent  21 Calea Grivitei Street \\010702 Bucharest \\ Romania 
\hfill\break\indent \and
\hfill\break\indent Faculty of Mathematics and Computer Science, University of Bucharest \\
\hfill\break\indent 14 Academiei Street, 010014 Bucharest, Romania.
%\hfill\break\indent 
% BCAM - Basque Center for Applied Mathematics\\
% \hfill\break\indent  Alameda de Mazarredo, 14
%E-48009 Bilbao, Basque Country - Spain.
}
 \email{{\tt
liviu.ignat@gmail.com}\hfill\break\indent  {\it Web page: }{\tt
http://www.imar.ro/\~\,lignat}}
\address{T. I. Ignat
\hfill\break\indent Institute of Mathematics ``Simion Stoilow'' of the Romanian Academy\\
\hfill\break\indent  21 Calea Grivitei Street \\010702 Bucharest \\ Romania 
}
 \email{\tt tatiana.ignat@gmail.com}
\address{Denisa Stancu-Dumitru
\hfill\break\indent Institute of Mathematics ``Simion Stoilow'' of the Romanian Academy\\
\hfill\break\indent  21 Calea Grivitei Street \\010702 Bucharest \\ Romania 
}
\email{\tt denisa.stancu@yahoo.com}
\begin{document}

\keywords{Nonlocal diffusion, compactness arguments, convection-diffusion, asymptotic behaviour\\
\indent 2000 {\it Mathematics Subject Classification.} 35B40,
 45G10, 46B50.}

\begin{abstract}
In this paper we give a new compactness criterion in the Lebesgue spaces $L^p((0,T)\times \Omega)$ and use it to obtain the first term in the asymptotic behaviour of the solutions of a nonlocal convection diffusion equation. We use previous results of Bourgain, Brezis and Mironescu  
to give a new criterion in the spirit of the Aubin-Lions-Simon Lemma.

\end{abstract}

\maketitle

\section{Introduction}

The aim of this paper is to give a new version of the classical compactness arguments in the  space $L^p((0,T)\times \Omega)$, \cite{MR916688}, one which can be adapted to nonlocal evolution equations.
We will apply this new criterion for the  analysis of  the long time behavior of the solutions of the following system 
\begin{equation}\label{cd}
\left\{
\begin{array}{ll}
u_t=J\ast u-u+G\ast |u|^{q-1}u -|u|^{q-1}u,& x\in \rr^d,t>0,\\[10pt]
u(0)=\varphi,
\end{array}
\right.
\end{equation}
where  $J$ and $G$ are non-negative $L^1(\rr)$-functions with mass one having a second and first momentum respectively. More details about the assumptions on these two functions will be given later. 

Let us now recall a classical compactness result  in the spaces $L^p((0,T),B)$, with $B$ a Banach space. Aubin-Lions-Simon Lemma \cite[Th.~5]{MR916688} assumes that we have three Banach spaces    $X\hookrightarrow B \hookrightarrow Y$ where the embedding  $X\hookrightarrow B$ is compact. A sequence $\{f_n\}_{n\geq 1}$ is relatively compact in
$L^p((0,T),B)$ (and in $C([0,T],B)$ if $p=\infty$) if we can guarantee that $\{f_n\}_{n\geq 1}$  is bounded in $L^p((0,T),X)$ and $\|\tau _h f_n-f_n\|_{L^p((0,T-h),Y)}\rightarrow 0$ as $h\rightarrow 0$ uniformly in $n$.

There are situations where we cannot bound uniformly a sequence $\{g_n\}_{n\geq 1}$ in a space that is compactly embedded in  $L^p(\Omega)$.  Instead of that we have estimates on some Dirichlet  forms that vary with $n$, estimates  that allow us to obtain the compactness of the sequence $\{g_n\}_{n\geq 1}$  (see for example \cite{1103.46310}, \cite{MR2041005} and \cite[Th.~6.11, p. ~128]{1214.45002}). Let us now be more precise. We choose
$1\leq p<\infty$ and $\Omega\subset \rr^d$  a smooth domain.  Function $\rho:\rr^d\rightarrow \rr$ is a nonnegative smooth  radial function with compact support, non identically zero, satisfying $\rho(x)\geq \rho(y)$ if $|x|\leq |y|$. Set
 $\rho_n(x)=n^d\rho(nx)$. 
Let $\{g_n\}_{n\geq 1}$ be a bounded sequence in $L^p(\Omega)$ such that
\begin{equation*}
n^p\int _{\Omega}\int _{\Omega} \rho_n(x-y) |g_n(x)-g_n(y)|^pdxdy\leq {M}.
\end{equation*}
Then as proved in  \cite{1103.46310}, \cite{MR2041005}, \cite[Th.~6.11, p. ~128]{1214.45002}, sequence  $\{g_n\}_{n\geq 1}$ is relatively compact in $L^p(\Omega)$.
Our main contribution is to use this compactness argument instead of the compact embedding $X\hookrightarrow B$ in the Aubin-Lions-Simon Lemma and to obtain a new compactness criterion in $L^p((0,T)\times \Omega)$. 

The main compactness tool that we prove in this paper is the following one. 
\begin{theorem}\label{rossi-time}
Let $1\leq p<\infty$ and $\Omega\subset \rr^d$ be an open set. Let $\rho:\rr^d\rightarrow \rr$ be a nonnegative smooth  radial function with compact support, non identically zero, and $\rho_n(x)=n^d\rho(nx)$. Let $\{f_n\}_{n\geq 1}$ be a sequence of functions  in $L^p((0,T)\times \Omega)$ such that
\begin{equation}\label{hyp.1}
\int _0^T \int _{\Omega} |f_n(t,x)|^pdxdt \leq \ {M}
\end{equation}
and
\begin{equation}\label{hyp.2}
n^p\int _0^T\int _{\Omega}\int _{\Omega} \rho_n(x-y) |f_n(t,x)-f_n(t,y)|^pdxdydt\leq {M}.
\end{equation}

1. If  $\{f_n\}_{n\geq 1}$ is weakly convergent in $L^p((0,T)\times \Omega)$ to $f$ then $f\in L^p((0,T),W^{1,p}(\Omega))$ for $p>1$ and $f\in L^1((0,T),BV(\Omega))$ for $p=1$.

2. Let $p>1$. Assuming  that $\Omega$ is a smooth bounded domain in $\rr^d$, $\rho(x)\geq \rho(y)$ if $|x|\leq |y|$ and that
\begin{equation}\label{hyp.3}
\|\partial_t f_n\|_{L^p((0,T),W^{-1,p}(\Omega))}\leq M
\end{equation}
then  $\{f_n\}_{n\geq 1}$ is relatively compact in $L^p((0,T)\times \Omega)$.
\end{theorem}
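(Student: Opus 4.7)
For Part 1, my strategy is duality. For a test vector field $\Phi\in C_c^\infty((0,T)\times\Omega;\rr^d)$, I will estimate $\int_0^T\!\int_\Omega f\,\mathrm{div}_x\Phi\,dx\,dt$ by combining the weak convergence $f_n\rightharpoonup f$ with a nonlocal integration-by-parts identity. The key observation is that the antisymmetric object $\Psi(x,y):=\tfrac12(\Phi(x)+\Phi(y))\cdot(y-x)$, together with the radial symmetry of $\rho$, allows one to write for each smooth $\Phi$ and each $g\in L^p(\Omega)$,
\[
 \int_{\Omega\times\Omega}(g(x)-g(y))\,\Psi(x,y)\,\rho_n(x-y)\,dx\,dy \;=\; \frac{K_\rho}{n^2}\int_\Omega g\,\mathrm{div}\Phi\,dx + O(n^{-3}),
\]
where $K_\rho:=\int z_1^2\rho(z)\,dz$ and the error is controlled by $\|\nabla^2\Phi\|_\infty$ on the support of $\Phi$. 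A single H\"older inequality in $L^p(\rho_n(x-y)\,dxdy)$, together with $|\Psi|\le (|\Phi(x)|+|\Phi(y)|)|y-x|/2$ and $\int|z|^{p'}\rho_n(z)\,dz=O(n^{-p'})$, bounds the left-hand side of the identity by $C\,n^{-2}\bigl(n^p E_n(g)\bigr)^{1/p}\|\Phi\|_{L^{p'}(\Omega)}$, with $E_n(g):=\int\!\int\rho_n(x-y)|g(x)-g(y)|^p\,dx\,dy$. Substituting $g=f_n(t,\cdot)$, integrating in $t$, applying H\"older in $t$ and \eqref{hyp.2}, and then sending $n\to\infty$ using the weak convergence produces $|\int_0^T\!\int_\Omega f\,\mathrm{div}_x\Phi\,dx\,dt|\le CM^{1/p}\|\Phi\|_{L^{p'}((0,T)\times\Omega)}$. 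By duality this forces $\nabla_x f\in L^p((0,T)\times\Omega;\rr^d)$ for $p>1$, giving $f\in L^p((0,T),W^{1,p}(\Omega))$. For $p=1$, the right-hand side becomes $CM\|\Phi\|_{L^\infty}$, yielding that $\nabla_x f(t,\cdot)$ is a finite vector Radon measure for a.e.\ $t$, hence $f\in L^1((0,T),BV(\Omega))$.

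For Part 2, I will verify the Kolmogorov--Riesz--Fr\'echet criterion in $L^p((0,T)\times\Omega)$: $L^p$-boundedness is \eqref{hyp.1} and tightness follows from $\Omega$ bounded, so only uniform-in-$n$ equicontinuity in $(t,x)$ remains. Spatial equicontinuity uses the radial monotonicity of $\rho$: since $\rho_n(x-y)\ge n^d\rho(nh)$ whenever $|x-y|\le|h|$, \eqref{hyp.2} provides a uniform control on $\int_0^T\!\int\!\int_{|y-x|\le|h|}|f_n(t,x)-f_n(t,y)|^p\,dy\,dx\,dt$ up to the factor $\rho(nh)^{-1}$. Combining this with the convolution smoothing $\tilde f_n:=f_n*\rho_n/C_\rho$ -- which is $O(n^{-1})$-close to $f_n$ in $L^p$ by \eqref{hyp.2} -- and optimising across $n$ yields, in the spirit of \cite{MR2041005}, $\sup_n \int_0^T\!\int_\Omega|f_n(t,x+h)-f_n(t,x)|^p\,dx\,dt\to 0$ as $|h|\to 0$. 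For temporal equicontinuity, \eqref{hyp.3} and $f_n(t+h,\cdot)-f_n(t,\cdot)=\int_0^h\partial_t f_n(t+s,\cdot)\,ds$ combined with H\"older in $s$ give $\|\tau^t_h f_n-f_n\|_{L^p((0,T-h),W^{-1,p})}\le Mh$. To upgrade this to $L^p$ I test against $\psi\in L^{p'}((0,T-h)\times\Omega)$ and decompose $\psi=\psi*\eta_\alpha+(\psi-\psi*\eta_\alpha)$ for a spatial mollifier $\eta_\alpha$: the first pairing is dominated by $Mh\cdot C\alpha^{-1}\|\psi\|_{p'}$ (since $\|\psi*\eta_\alpha\|_{W^{1,p'}}\le C\alpha^{-1}\|\psi\|_{p'}$), while the second is controlled by the uniform spatial modulus of continuity of $f_n$ at scale $\alpha$ obtained in the previous step. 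Balancing $\alpha=\alpha(h)\to 0$ as $h\to 0$ produces the desired temporal $L^p$-modulus.

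The principal technical difficulty is the uniform spatial equicontinuity in Part 2. The bound \eqref{hyp.2} only furnishes a weighted average of finite differences at scale $\sim 1/n$, so converting this to a pointwise-in-$h$ modulus uniform in $n$ is not automatic. The radial monotonicity of $\rho$ assumed in Part 2 (but not in Part 1) is precisely what supplies the pointwise lower bound on $\rho_n(x-y)$ inside small balls and allows the passage from averaged to pointwise control. Once the uniform spatial modulus is in hand, the rest of the argument follows a streamlined Aubin--Lions--Simon template, with the BBM-type compactness of \cite{1103.46310,1214.45002,MR2041005} playing the role traditionally played by the compact embedding $X\hookrightarrow B$.
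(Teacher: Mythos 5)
Your Part~1 argument is essentially the standard BBM duality proof of the regularity of the weak limit (antisymmetrize, second-order Taylor expansion of the test field, one H\"older inequality against $\rho_n\,dx\,dy$), which is the content of the first part of Theorem~\ref{rossi}; the paper simply cites this, so the two approaches coincide there.

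For Part~2 your route is genuinely different. The paper runs the Aubin--Lions--Simon template: Step~I gets compactness in $L^p((0,T),W^{-1,p}(\Omega))$ via Theorem~\ref{simon}, Step~II upgrades to $L^p_{loc}$ via the nonlocal $\eps$-interpolation inequality of Lemma~\ref{localizare.n} (the BBM analogue of $\|u\|_B\le \eps\|u\|_X+\eta(\eps)\|u\|_Y$), and Step~III reaches the full domain via Ponce's boundary estimate, Lemma~\ref{ponce}. You instead verify Kolmogorov--Riesz--Fr\'echet in the joint variables $(t,x)$: interior spatial equicontinuity from a $v+w$ decomposition at scale $1/n$, then temporal equicontinuity by a duality/mollification argument that balances the $W^{-1,p}$ time modulus against the spatial modulus. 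This is a clean alternative, but as written it has two gaps.

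The less serious one concerns the mollifier. Writing $\tilde f_n = f_n\ast\rho_n/C_\rho$ and then bounding $\|\nabla\tilde f_n\|_{L^p}$ from \eqref{hyp.2} requires $|\nabla\rho|\lesssim\rho$, which fails for typical compactly supported bumps (e.g.\ $\rho(z)=(1-|z|^2)_+^2$ has $|\nabla\rho|\not\lesssim\rho$ near $|z|=1$). The paper avoids this in the proof of Lemma~\ref{balance-p} by choosing a separate kernel $\eta$, compactly supported where $\rho$ is bounded below, with $|\nabla\eta|+|\eta|\lesssim\rho$. The same fix applies to your argument.

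The more serious gap is at the boundary. The Kolmogorov--Riesz--Fr\'echet criterion on a bounded domain is not ``boundedness plus interior equicontinuity'': one must also rule out concentration near $\partial\Omega$, i.e.\ show $\sup_n\int_0^T\int_{\Omega\setminus\Omega_r}|f_n|^p\,dx\,dt\to 0$ as $r\to 0$ with $\Omega_r=\{x\in\Omega:\,d(x,\partial\Omega)>r\}$. You write ``tightness follows from $\Omega$ bounded'', but boundedness only removes escape to infinity; it does not prevent concentration at the boundary. Moreover, both of your equicontinuity arguments are intrinsically interior: the $v+w$ decomposition needs $\tilde f_n(x)$ to involve only values of $f_n$ inside $\Omega$, and your duality step for the temporal modulus pairs $\tau^t_h f_n-f_n$ against $\psi\ast\eta_\alpha$ via the $W^{-1,p}(\Omega)$--$W_0^{1,p'}(\Omega)$ duality, which requires $\psi\ast\eta_\alpha$ to vanish on $\partial\Omega$, i.e.\ $\psi$ supported away from the boundary. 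As stated, your argument therefore only yields relative compactness in $L^p((0,T),L^p_{loc}(\Omega))$. The missing boundary no-concentration estimate is exactly what Lemma~\ref{ponce} supplies; it is a nontrivial consequence of \eqref{hyp.2} together with the Lipschitz regularity of $\partial\Omega$, not of boundedness alone, and your proof needs an analogue of it to reach $L^p((0,T)\times\Omega)$.
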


%\begin{remark}
Extensions to mixed type space norms of the type $L^p((0,T),L^q(\Omega))$ could also be  obtained by adapting the estimates in this paper. The possibility of obtaining  more general nonlocal compactness tools as in Aubin-Lions-Simon Lemma (see Theorem \ref{3spaces} below) remains to be analyzed. 
In \eqref{hyp.3} for technical reasons we considered the space $W^{-1,p}(\Omega)$ but we believe that the results still hold by replacing $W^{-1,p}(\Omega)$ with any space $Y$ such that $L^p(\Omega) \hookrightarrow Y$ continuously. More general functions $\rho_n$ can be considered, like  those in \cite{MR2041005} but this is beyond the scope of this article. We leave to the reader the possible extension of the above result to the case when $\rho_n$ are not necessarily obtained by rescaling a given function $\rho$. The case of general weights $\rho_n$ will introduce new difficulties since technical results in 
Lemma \ref{localizare.n} and Lemma \ref{balance-p}, used  in the proof of Theorem \ref{rossi-time},
make use in an essential manner of the fact that the weights are obtained by scaling from a given function $\rho$.
%\end{remark}

Once we prove Theorem \ref{rossi-time} we apply it in the analysis of the asymptotic behaviour of system \eqref{cd}. Recently, similar results to those in Theorem \ref{rossi-time} have been employed in the analysis of some numerical splitting methods for Burgers like equations \cite{pozo-ignat}.

Let us now be more precise about the assumptions on the kernels $J$ and $G$. We assume that $J,G:\rr^d\rightarrow \rr$ are non-negative functions satisfying
the following assumptions 

(H0) $\displaystyle \int _{\rr^d}J(z)dz=\int _{\rr^d}G(z)dz=1$,\\[3pt]

(H1) $J\in L^1(\rr^d,1+|x|^2), G\in L^1(\rr^d,1+|x|),$

(H2) $J$ is positive in a neighborhood of the origin \\[2pt]

(H3) $J$ is symmetric, i.e. $J(z)=J(-z)$. In particular the first moment of $J$ vanishes
$$\int _{\rr^d}J(z)z_jdz=0, \quad j=1,\dots, d.$$

Condition (H0) is assumed for simplicity. In fact the same analysis can be done for integral equations of the type
\begin{equation*}\label{cd-gen}
\left\{
\begin{array}{ll}
u_t(t,x)=&\displaystyle \int _{\rr^d}J(x-y)( u(t,y)-u(t,x))dy+\\[10pt]
&\quad \displaystyle+\int _{\rr^d}G(x-y)(|u|^{q-1}u(t,y) -|u|^{q-1}u(t,x))dy, \quad  x\in \rr^d, t>0,\\[10pt]
u(0)=\varphi,
\end{array}
\right.
\end{equation*}
without assuming that the mass of each of the functions $J$ and $G$ is one.

The well-posedness of this model  has been analyzed in \cite[Th.~1.1]{MR2356418} under smoothness assumptions of the functions $J$ and $G$. We emphasize that the same global well-posedness result holds under the assumptions that $J$ and $G$ are non-negative $L^1(\rr^d)$-functions:
for any $q>1$ and $\varphi\in L^1(\rr^d)\cap L^\infty(\rr^d)$ there exists a unique global solution $u\in C([0,\infty), L^1(\rr^d)\cap L^\infty(\rr^d))$ satisfying 
$$\|u(t)\|_{L^1(\rr^d)}\leq \|\varphi\|_{L^1(\rr^d)}\quad \text{and} \quad   
\|u(t)\|_{L^\infty(\rr^d)}\leq \|\varphi\|_{L^\infty(\rr^d)}.$$
Since $J$ and $G$ have mass one the mass conservation property holds
$$\int_{\rr^d}u(t,x)dx=\int_{\rr^d}\varphi(x)dx.$$
Since the proof of the global well-posedness follows by the same fix point argument as in \cite{MR2356418} we will omit it here.

Observe that under the assumptions (H0), (H1) and (H3) there exist positive constants $R$, $\delta$
such that 
\[
\left\{
\begin{array}{cc}
 \widehat J(\xi)\leq 1-\frac{A}2 |\xi|^2, & |\xi|\leq R,     \\[10pt]
  \widehat J(\xi)\leq 1-\delta, & |\xi|\geq R,     
\end{array}
\right.
\]
where $A=\frac 12 \int _{\rr^d}J(z)|z|^2dz$.
Under these assumptions  it has been proved 
 in \cite[Th.~1.4]{MR2356418} that the solutions of \eqref{cd} decay similar to the classical heat equation: for any $1\leq p<\infty$ the following holds:
\begin{equation}\label{decay}
\|u(t)\|_{L^p(\rr^d)}\leq C(p,d,\|\varphi\|_{L^1(\rr^d)}, \|\varphi \|_{L^\infty(\rr^d)}) (t+1)^{-\frac d2(1-\frac 1p)}.
\end{equation}
This decay property has been obtained in \cite{MR2356418} by the so-called \textit{Fourier Splitting method} \cite{MR571048, MR856876, MR1356749} and in a more general setting in \cite{MR2542582}. When $p=2$ a similar argument has also been  used in \cite{MR2103702}. As far as the authors know, 
the case $p=\infty$ in \eqref{decay} is  open.

In the case when the nonlinear term is supercritical, i.e. $q>1+1/d$, the first term in the asymptotic behavior has been analyzed in \cite{MR2356418} under the additional assumption that $J\in \mathcal{S}(\rr^d)$, the class of rapidly decreasing functions. There the main idea was that the nonlinear part decays faster than the linear semigroup and then the first term in the long time behavior is given by the linear semigroup. 
This has been already observed in \cite{0762.35011} in the case of the classical convection-diffusion system.

The aim of this paper is to give an answer to the critical case $q=1+1/d$ even though we give a proof that both treats the critical and super-critical case. We emphasize that we do not require  function $J$ to  belong to $\mathcal{S}(\rr^d)$ as in \cite{MR2356418}.
The subcritical case $1<q<1+1/d$ is still open. The method we employ is  the so-called \textit{four step method} that consists in the analysis of some rescaled
orbits $\{u_\lambda(t)\}$. We refer to \cite{MR1977429} for a review of the method in the case of the porous medium equation.

We consider two important quantities
$$A=\frac 12 \int _{\rr^d}J(z)|z|^2dz\quad \text{and}\quad B=(B_1,\dots, B_d),  B_j=\int _{\rr^d}G(z)z_jdz, j=1,\dots, d.$$
The main result concerning system \eqref{cd} is the following one. 

\begin{theorem}\label{asimp}
Let $J$ and $G$ be two non-negative functions satisfying hypotheses (H0)-(H3).
For any $\varphi\in L^1(\rr^d)\cap L^\infty(\rr^d)$ the solution $u$ of system \eqref{cd} satisfies
\begin{equation}\label{lim.t}
\lim _{t\rightarrow \infty} t^{\frac  d2(1-\frac 1p)}\Big\|u(t) -t^{-d/2} f_m\Big(\frac {x}{\sqrt t}\Big)\Big\|_{L^p(\rr^d)}=0, \quad 1\leq p<\infty,
\end{equation}
where the profile $f_m$ is the smooth solution of  the equation
$$-A\Delta f_m-\frac 12 x\cdot \nabla f_m=\frac d2 f_m - \alpha B\cdot \nabla (|f_m|^{q-1}f_m) \quad \text{in}\  \rr^d,$$
%\begin{equation}\label{burgers}
%\left\{
%\begin{array}{ll}
%u_t=Au_{xx},& x\in \rr,t>0\\[10pt]
%u(0)=M\delta _0.
%\end{array}
%\right.
%\end{equation}
 with $\int_{\rr^d} f_m=m$ where $m$ is the mass of the initial data $\varphi$ and 
 $$\alpha=
 \left\{
 \begin{array}{ll}
 1, & q=1+\frac 1d,\\[10pt]
 0, & q>1+\frac 1d.
 \end{array}
 \right.
 $$
\end{theorem}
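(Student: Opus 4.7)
The plan is to apply the classical four-step method, with Theorem~\ref{rossi-time} supplying the compactness step that is unavailable in the purely nonlocal framework. Set
$$u_\lambda(t,x)=\lambda^d u(\lambda^2 t,\lambda x);$$
the change of variables $x=\sqrt{t}\,y$ shows that \eqref{lim.t} is equivalent to $\lim_{\lambda\to\infty}\|u_\lambda(1,\cdot)-f_m\|_{L^p(\rr^d)}=0$. It therefore suffices to prove relative compactness of $\{u_\lambda\}$ in $L^p_{\mathrm{loc}}((0,\infty)\times\rr^d)$ and to identify every accumulation point with the unique source-type self-similar solution of mass $m$ of the limiting local equation. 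A direct computation gives
$$\partial_t u_\lambda=\lambda^2\bigl[(J_\lambda\ast u_\lambda)-u_\lambda\bigr]+\lambda^{d+2-dq}\bigl[(G_\lambda\ast|u_\lambda|^{q-1}u_\lambda)-|u_\lambda|^{q-1}u_\lambda\bigr],$$
with $J_\lambda(x)=\lambda^d J(\lambda x)$, $G_\lambda(x)=\lambda^d G(\lambda x)$. The exponent $d+2-dq$ equals $1$ precisely for $q=1+1/d$ and is strictly smaller for $q>1+1/d$, which is the source of the dichotomy in $\alpha$.

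I would then verify the hypotheses of Theorem~\ref{rossi-time} for $\{u_\lambda\}$. For every $\tau>0$ the decay \eqref{decay} gives $\sup_{\lambda\ge1,\,t\ge\tau}\|u_\lambda(t)\|_{L^p(\rr^d)}\le C(\tau)$, i.e.\ \eqref{hyp.1} on each $(\tau,T)\times\Omega$. The key nonlocal seminorm bound \eqref{hyp.2} comes from the natural energy identity: multiplying \eqref{cd} by $|u|^{p-1}\sgn u$ and using the symmetry of $J$ together with the pointwise inequality $(a-b)(|a|^{p-1}\sgn a-|b|^{p-1}\sgn b)\ge c_p|a-b|^p$ (for $1<p<2$ replaced by its standard H\"older-type variant), and absorbing the convective contribution through the $L^{q+1}$ decay of \eqref{decay}, one obtains
$$\int_\tau^\infty\!\!\int_{\rr^d}\!\int_{\rr^d}J(x-y)|u(s,x)-u(s,y)|^p\,dx\,dy\,ds<\infty,$$
which, after the self-similar rescaling, becomes exactly \eqref{hyp.2} for $u_\lambda$. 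Testing the rescaled equation against $W^{1,p'}_0(\Omega)$ functions then yields \eqref{hyp.3}.

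Applying Theorem~\ref{rossi-time} on every bounded $\Omega$, together with a diagonal extraction and the tightness coming from the uniform second-moment bound $\int|x|^2u_\lambda(t,x)\,dx\le Ct$ (itself the self-similar rescaling of the linear growth of the second moment of $u$, which follows by testing \eqref{cd} against $|x|^2$), I would obtain a subsequence and a limit $U$ such that $u_\lambda\to U$ strongly in $L^p((\tau,T)\times\rr^d)$ for every $0<\tau<T$. The case $p=1$, not directly covered by the second part of Theorem~\ref{rossi-time}, follows by interpolation between a strong $L^{p_0}$ convergence for some $p_0>1$ and the $L^1$ tightness. Taylor expansion of the rescaled kernels yields, in the sense of distributions,
$$\lambda^2[(J_\lambda\ast u_\lambda)-u_\lambda]\longrightarrow c_J\,\Delta U,\quad \lambda^{d+2-dq}[(G_\lambda\ast v_\lambda)-v_\lambda]\longrightarrow -\alpha\, B\cdot\nabla(|U|^{q-1}U),$$
so $U$ solves the local convection-diffusion equation $U_t=c_J\,\Delta U-\alpha\,B\cdot\nabla(|U|^{q-1}U)$, conserves the mass $m$, and, because $u_\lambda(0,\cdot)=\lambda^d\varphi(\lambda\cdot)\rightharpoonup m\delta_0$ as measures, has initial trace $m\delta_0$. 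Uniqueness of the source-type self-similar solution of this local equation (Gaussian when $\alpha=0$; the result of Escobedo-V\'azquez-Zuazua in the critical case $\alpha=1$, already invoked in \cite{MR2356418}) forces $U(t,x)=t^{-d/2}f_m(x/\sqrt{t})$ with $f_m$ the stated profile. Since the limit is unique, the whole family converges and evaluating at $t=1$ gives \eqref{lim.t}.

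The hardest step will be the uniform nonlocal dissipation estimate when $p\ne 2$: the pointwise inequality loses its sharpness when $1<p<2$, and the convective term, lacking a sign, must be absorbed via a Gagliardo-Nirenberg-type interpolation between $L^{q+1}$ and the nonlocal seminorm itself. This difficulty is sharpest in the critical case $q=1+1/d$, in which the convective and diffusive contributions sit at the same scale under the self-similar rescaling and no decaying factor is available, so the bookkeeping must be sharp.
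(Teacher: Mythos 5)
Your overall framework --- self-similar rescaling, verifying the hypotheses of Theorem~\ref{rossi-time}, Taylor expansion of the rescaled kernels, identification of the limit as the unique source-type solution, uniqueness forcing convergence of the whole family --- is exactly the paper's route. However, you diverge from the paper on two technical points, and both divergences create genuine gaps.

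First, you propose to verify \eqref{hyp.2} for the $L^p$ seminorm directly, for all $p$, by multiplying the equation by $|u|^{p-1}\sgn u$. You correctly flag that the pointwise inequality degenerates for $1<p<2$ (one only gets $(a-b)(|a|^{p-1}\sgn a-|b|^{p-1}\sgn b)\gtrsim |a-b|^2/(|a|+|b|)^{2-p}$, which does \emph{not} yield the clean $L^p$ seminorm you need), and this is a real obstruction. The paper simply avoids the issue: it establishes the seminorm bound only for $p=2$ (Lemma~\ref{est.2.J}, a textbook energy identity where the convection term is nonpositive), runs the compactness argument of Theorem~\ref{rossi-time} with $p=2$ (via Proposition~\ref{main}) to extract an $L^1_{\mathrm{loc}}$-convergent subsequence, and then obtains the general $1\le p<\infty$ convergence \eqref{lim.t} at the very end by interpolating $L^p$ between $L^1$ and $L^{2p}$, using the uniform $L^{2p}$ decay of both $u$ and $U$. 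That one interpolation replaces your entire proposed family of $p$-dependent energy estimates and sidesteps the sub-quadratic difficulty completely.

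Second, your tightness mechanism --- a uniform second-moment bound $\int|x|^2 u_\lambda(t,x)\,dx\le Ct$ obtained by testing against $|x|^2$ --- does not go through under the stated hypotheses. Testing the rescaled equation against $|x|^2$ makes the diffusion contribution $2Am$ as expected, but the convection term yields $\int(\widetilde G_\lambda\ast|x|^2-|x|^2)u_\lambda^q\,dx$, which after expansion contains a term proportional to $\int (x\cdot B)\,u_\lambda^q\,dx$; this first-moment-weighted quantity is not controlled a priori. More fundamentally, the theorem only assumes $\varphi\in L^1\cap L^\infty$, so $\varphi$ need not have a finite second moment, and the initial second moment of $u_\lambda$ need not even be finite. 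The paper's Lemma~\ref{est.2r} proves tightness by a cutoff argument instead: testing against a smooth $\psi_R$ that is $0$ on $|x|<R$ and $1$ on $|x|>2R$ and using the scaling properties in Lemma~\ref{lemma5} gives
$\int_{|x|>2R}u_\lambda(t,x)\,dx\le\int_{|x|>R}\varphi+C(t/R^2+t^{1/2}/R)$,
which requires nothing beyond $\varphi\in L^1$. You should replace the second-moment argument by something of this form.

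A smaller remark: the decay \eqref{decay} is stated only for $1\le p<\infty$, not $p=\infty$ (this is noted as open in the paper), so any step that implicitly invokes uniform $L^\infty$ control of $u_\lambda$ would need to be restructured; the paper's lemmas are careful to use only $L^p$ norms with $p$ finite.
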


Next, we say a few words about the above asymptotic profile  $$U(t,x)=t^{-d/2} f_m\Big(\frac {x}{\sqrt t}\Big).$$
 When $q>1+1/d$ or $B=0_{1,d}$ the asymptotic profile   is  the rescaled heat kernel solution of 
 \begin{equation}\label{heat}
\left\{
\begin{array}{ll}
U_t=A \Delta U,& x\in \rr^d, t>0,\\[10pt]
U(0)=m\delta _0.
\end{array}
\right.
\end{equation} 
 When $q=1+1/d$ and $B\neq 0_{1,d}$, $U$ is the unique solution of the following equation
\begin{equation}\label{burgers}
\left\{
\begin{array}{ll}
U_t=A \Delta U- B\cdot \nabla (|U|^{1/d}U),& x\in \rr^d, t>0,\\[10pt]
U(0)=m\delta _0.
\end{array}
\right.
\end{equation}
The well-posedness of this system has been analyzed in \cite{MR1233647} in the one-dimensional case and  in \cite{MR1266100} in the multi-dimensional case.
It has been proved in \cite{MR1032626} that the profile $f_m$ is of constant sign and decays exponentially to zero as $|x|\rightarrow \infty$. 

We remark that in the case of the symmetric function $G$, i.e. $G(z)=G(-z)$, the solution of \eqref{cd} converges to the heat kernel since in this case $B$ vanishes. When $B\neq 0$ we obtain in the limit the solutions of the viscous convection-diffusion equation. Along the paper we will consider the case of nonnegative initial data, so nonnegative solutions of system \eqref{cd}. The case of sign-change solutions could also be analyzed with small modifications of the proof (see \cite{MR1233647} for a rigorous treatment of the critical case for the convection-diffusion equation). 

In the linear case, i.e. $u_t=J\ast u-u$, the asymptotic behavior has been obtained in \cite{MR2257732} by means of Fourier analysis techniques and in \cite{MR3155537} by scaling methods. In \cite{MR3155537} the scaling argument works since it is applied to the smooth part of the solution.  
Refined asymptotics  have been obtained in \cite{MR2460931, MR2557678}. 
We also recall here \cite{MR2748435, MR2805821} where a scaling method is used for equations of the type $u_t=J\ast u-u-u^p$. There the authors obtain barriers for  $W$ and its derivatives, $W$ being the smooth part  of the solution of the linear equation $u_t=J\ast u-u$. Once these barriers are obtained the authors split  the solution of the nonlinear problem in  a way that permits obtaining uniform H\"older estimates and then compactness.
The method developed here is more flexible in the sense that it uses only  energy estimates that involve the linear part of the equation and the good sign of the nonlinearity.

In the local case, i.e. $u_t=\Delta u+a\cdot \nabla (|u|^{q-1}u)$, the same analysis has been performed in a series of papers. In \cite{0762.35011} the case $q\geq 1+1/d$ is treated and  the results in the  critical case have been  obtained by a careful space-time change of variables and using weighted Sobolev spaces. The sub-critical case is more difficult and  the one-dimensional case has been considered in \cite{MR1233647}. The extension to higher dimensions has been obtained in \cite{MR1266100} and \cite{MR1440033}. 

In contrast with the analysis in \cite{0762.35011} here we assume that the initial data belong to the space $L^1(\rr^d)\cap L^\infty(\rr^d)$. This assumption is necessary since even in the linear case $u_t=J\ast u-u$ a lack of smoothing effect is present. More precisely the solutions of the linear model are as regular as the initial data. In the case of the heat equation with initial data in $L^1(\rr^d)$ the solution at any positive time belongs to any $L^p(\rr^d)$ space $1\leq p\leq \infty$,   and this type of gain of integrability can  also be proved for the nonlinear convection-diffusion \cite{0762.35011}. 

We recall some  similar models to those analyzed here. In \cite{MR2138795} the author considers a one-dimensional model that is nonlocal in the diffusive part $u_t=J\ast u-u+uu_x$ with $J=e^{-|x|}$ and he proves that its solutions converge to the ones of Bourger's equation with Dirac delta initial data. However the key tool used there, an Oleinik estimate $\partial_x u(t)\leq 1/t$ in $\mathcal{D}'(\rr)$  is not available in our model. The methods used here can be adapted to analyze similar models but with nonlinearities of the type $(u^q)_x$, $q\geq 2$,   \cite{MR3190994}. 
In these cases, entropy conditions in the sense of Kru{\v{z}}kov \cite{MR0267257}  should be imposed on weak solutions in order to have a well-posed problem.
This does not appear in our model since the nonlinearity does not involve derivatives.

The models considered here could be related with the ones considered in \cite{MR2924787} where a scalar conservation law with a diffusion-type source of the type $u_t+\nabla \cdot f(u)=\Delta P_s u$ is analyzed. There $P_s$ is essentially given by $\widehat {P_su}(\xi)\simeq (1+|\xi|^2)^{-s}\hat u(\xi)$ and even more general models are considered. However, in order to obtain the long time behavior of the solutions, the authors assume that the initial data belong to some $H^N(\rr^d)$ spaces where $N$ is large enough. In view of our analysis here we believe that the results in \cite{MR2924787} can be obtained by assuming less regularity on the initial data.
The analysis of these models by our methods remains to be considered in future papers.

Similar  nonlocal models have been introduced recently in \cite{MR2888353} where  the nonlocal convective term takes the form
$$\int _{\rr}\phi_0(y-x)\Big(\frac{u(t,y)+u(t,x)}2\Big)^2dy$$
where $\phi_0$ is an odd  function. The possible application of the methods introduced here remains to be analyzed. From the very beginning, the main difficulty in these models is the global existence of the solutions.
Some models when the convection is nonlocal have been considered previously in \cite{tesis-tatiana}, $u_t=u_{xx}+G\ast u^q-u^q$, $q\geq 2$ and \cite{MR2739418} , $u_t=u_{xx}+(u^{q-1}(K\ast u))_x$, $q=2$. The main difficulty in obtaining the asymptotic behavior for similar models where the convection is dominant, i.e. $1<q<2$, is to  obtain an entropy estimate. Even if the entropy inequality can be avoided in the critical case  it seems to be crucial for the uniqueness of the solutions of the limit equation in the sub-critical case.  However, we refer to \cite{MR1440033} where
the asymptotic behavior of systems of the type $u_t=\Delta u-\partial _y(|u|^{q-1}u)$ with $q$ subcritical
is obtained without entropy estimates  but rather with a kinetic formulation that allows to use some compactness arguments
  previously employed in the case of multidimensional scalar conservation laws \cite{MR1201239}. The possible application of these kinetic methods to the case on nonlocal diffusion and/or convection remains to be analyzed in the future.
%We will comment more on these problems in Section \ref{open}. 

The paper is organized as follows. In Section \ref{prelim} we review a few compactness arguments known to be useful in the analysis of time evolution problems and prove Theorem \ref{rossi-time}.
 Once the compactness tool is obtained,  in Section \ref{proof.main.result} we  prove Theorem \ref{asimp}.
%Finally we comment about possible extensions and present some open problems.

\section{Compactness Tools}\label{prelim}\setcounter{equation}{0}

In this section we review a few classical compactness tools and give some results that will allow us to prove the main result of this paper.

First we recall some results given in \cite{MR916688} about the characterization of compact sets in $L^p((0,T),B)$ where $B$ is a Banach space and $1\leq p\leq \infty$.

\begin{theorem}[\cite{MR916688}, Th. 1] \label{simon}
Let $\mf \subset L^p((0,T),B)$. $\mf$ is relatively compact in $L^p((0,T),B)$ for $1\leq p<\infty$, or $C([0,T],B)$ for $p=\infty$ if and only if
\begin{enumerate}
\item \label{condition.1} 
 $\displaystyle \Big\{\int _{t_1}^{t_2} f(t)dt, \, f\in \mf\Big\}$ is  relatively compact in $B$ for all $0<t_1<t_2<T$,\\

\item  \label{condition.2}  $\|\tau _h f-f\|_{L^p((0,T-h),B)}\rightarrow 0$ as $h\rightarrow 0$ uniformly for $f\in \mf$. 
\end{enumerate}
\end{theorem}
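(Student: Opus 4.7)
The plan is to handle the two implications separately, treating the Bochner $L^p$ case and the $C(0,T,B)$ case $(p=\infty)$ in parallel. For the necessity of condition (1), the key observation is that the map $\Phi:f\mapsto\int_{t_1}^{t_2}f(t)\,dt$ is a continuous linear operator from $L^p(0,T,B)$ into $B$ (trivially for $p=1$, via H\"older for $1<p<\infty$, and with bound $(t_2-t_1)$ for $p=\infty$), hence sends relatively compact sets to relatively compact sets. For the necessity of (2), I would combine the strong continuity of the translation $\tau_h$ on $L^p(\rr,B)$ (after zero extension) with the total boundedness of $\mf$: cover $\mf$ by finitely many balls of radius $\eta$ around centers $g_1,\dots,g_N$, observe that for $h$ small each $\|\tau_h g_j-g_j\|$ is small, and apply a triangle inequality to get smallness uniformly in $f\in\mf$.

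For sufficiency, the strategy is a time-mollification plus Arzel\`a--Ascoli argument. Fix a smooth compactly supported mollifier $\rho_\eps$ on $\rr$ and set $f_\eps(t)=\int\rho_\eps(t-s)f(s)\,ds$, with $f$ extended by zero outside $(0,T)$. Two facts must be established: first, $f_\eps\to f$ in $L^p(0,T,B)$ \emph{uniformly} for $f\in\mf$ as $\eps\to 0$; and second, for each fixed small $\eps$ the family $\{f_\eps\}_{f\in\mf}$ is relatively compact in $C([t_1,t_2],B)$ for every $[t_1,t_2]\subset(0,T)$, from which relative compactness in $L^p(0,T,B)$ is recovered by using the first fact near the boundary. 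The first is a direct consequence of condition (2), since writing $f_\eps(t)-f(t)=\int\rho_\eps(s)(f(t-s)-f(t))\,ds$ and applying a Jensen-type inequality in $L^p$ reduces uniform smallness of $\|f_\eps-f\|_{L^p(0,T,B)}$ to the equicontinuity-of-translation assumption.

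The Arzel\`a--Ascoli step is the main obstacle and the place where condition (1) enters. Equicontinuity in $t$ is easy: $\partial_t f_\eps=(\rho_\eps')*f$, so $\|\partial_t f_\eps(t)\|_B\leq\|\rho_\eps'\|_{L^{p'}(\rr)}\|f\|_{L^p(0,T,B)}$, yielding a Lipschitz constant uniform in $\mf$. Pointwise relative compactness of $\{f_\eps(t):f\in\mf\}$ in $B$ for each fixed $t$ is the delicate point. The trick is to approximate $\rho_\eps(t-\cdot)$ in $L^{p'}$ by a simple function $\sum_{i=1}^{N}c_i\mathbf{1}_{[a_i,b_i]}$; then $f_\eps(t)$ is $B$-approximated by $\sum_i c_i\int_{a_i}^{b_i}f(s)\,ds$, which lies in a finite linear combination of the relatively compact sets provided by (1), and is therefore in a relatively compact subset of $B$. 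Refining the simple-function approximation and using the uniform $L^p$-bound coming from relative compactness of $\Phi(\mf)$ give total boundedness of $\{f_\eps(t)\}_{f\in\mf}$.

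Assembling the pieces: $\{f_\eps\}_{f\in\mf}$ is relatively compact in $L^p(0,T,B)$ for each fixed small $\eps$, and $\mf$ lies within arbitrarily small $L^p$-neighborhoods of this relatively compact family, hence $\mf$ is totally bounded and thus relatively compact. The $p=\infty$ case proceeds identically with the uniform norm replacing $L^p$, noting that condition (2) in that regime means uniform equicontinuity of the family, which combined with (1) (via the approximation $h^{-1}\int_t^{t+h}f(s)\,ds\to f(t)$ uniformly) yields pointwise relative compactness of $\{f(t):f\in\mf\}$ and reduces the claim to a direct application of Arzel\`a--Ascoli.
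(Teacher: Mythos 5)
This theorem is quoted in the paper directly from Simon \cite{MR916688} with no proof supplied, so there is no in-paper argument to compare against; I will assess your proof on its own terms. The overall architecture — mollification in time, Arzel\`a--Ascoli via equicontinuity plus pointwise compactness from condition (1) through simple-function approximation of the mollifier kernel — is the right idea and is close in spirit to Simon's actual argument (Simon uses Steklov means $\frac1h\int_t^{t+h}f$ rather than smooth mollifiers, which avoids zero-extension issues, but the mechanism is the same). However, two steps are asserted but not actually established, and both are exactly the nontrivial content of Simon's proof.

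First, the claim that $f_\eps\to f$ in $L^p(0,T;B)$ \emph{uniformly} over $\mf$ is ``a direct consequence of condition (2)'' is not correct. Writing $f_\eps(t)-f(t)=\int\rho_\eps(s)\bigl(\bar f(t-s)-f(t)\bigr)\,ds$ with $\bar f$ the zero extension and applying Jensen, you do get control of the interior contribution by $\sup_{|s|\le\eps}\|\tau_s f-f\|_{L^p}$, but near the endpoints the extension by zero produces terms of the form $\int_0^\eps\|f(t)\|_B^p\,dt$ and $\int_{T-\eps}^T\|f(t)\|_B^p\,dt$, and neither (1) nor (2) controls these directly. Showing that conditions (1) and (2) force $\sup_{f\in\mf}\bigl(\|f\|_{L^p(0,\tau;B)}^p+\|f\|_{L^p(T-\tau,T;B)}^p\bigr)\to0$ as $\tau\to0$ is a separate lemma in Simon (a shifting/averaging argument combining both hypotheses), and without it neither your uniform-approximation claim nor your final ``recover compactness near the boundary'' step goes through; as written, that last step is circular, since you invoke the full-interval uniform convergence to repair the very boundary region where its proof fails.

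Second, you repeatedly use a uniform bound $\sup_{f\in\mf}\|f\|_{L^p(0,T;B)}<\infty$: it appears in the Lipschitz bound $\|\partial_t f_\eps(t)\|_B\le\|\rho'_\eps\|_{L^{p'}}\|f\|_{L^p}$ and in the simple-function approximation $\|f_\eps(t)-\sum_i c_i\int_{a_i}^{b_i}f\|_B\le\|\rho_\eps(t-\cdot)-\phi\|_{L^{p'}}\|f\|_{L^p}$. You attribute this bound to ``relative compactness of $\Phi(\mf)$,'' but boundedness of $\{\int_{t_1}^{t_2}f:f\in\mf\}$ in $B$ for each fixed $(t_1,t_2)$ does not by itself give an $L^p(0,T;B)$ bound on $\mf$; deriving it requires combining (1) with (2) (e.g.\ bounding $f$ by its Steklov mean plus a translation error on interior intervals and then handling the endpoints), which again is part of what makes Simon's theorem nontrivial. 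Both gaps are repairable, but as written the proposal assumes the two key quantitative consequences of the hypotheses rather than proving them.
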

The following  criterion  is also given.
\begin{theorem}[\cite{MR916688}, Th.~5] \label{3spaces}
Let us consider three Banach spaces  $X\hookrightarrow B \hookrightarrow Y$ where $X\hookrightarrow B$ is compact.
Assume $1\leq p\leq \infty$ and \\[5pt]
i) $\mf$ is bounded in $L^p((0,T),X)$,\\[5pt]
ii) $\|\tau _h f-f\|_{L^p((0,T-h),Y)}\rightarrow 0$ as $h\rightarrow 0$ uniformly for $f\in \mf$.

Then $\mf $ is relatively compact in $L^p((0,T),B)$ (and in $C([0,T],B)$ if $p=\infty$).
\end{theorem}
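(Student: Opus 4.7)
The plan is to derive Theorem \ref{3spaces} as a consequence of Theorem \ref{simon} by verifying its two conditions with target space $B$. Condition (\ref{condition.1}) will follow almost immediately from the compact embedding $X\hookrightarrow B$, while condition (\ref{condition.2}) — lifting the $Y$-valued equicontinuity hypothesis (ii) to a $B$-valued equicontinuity — is the real content and requires an interpolation argument.

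For condition (\ref{condition.1}), fix $0<t_1<t_2<T$. Hölder's inequality applied in the variable $t$ gives, for every $f\in\mf$,
\begin{equation*}
\Big\|\int_{t_1}^{t_2} f(t)\,dt\Big\|_X \leq (t_2-t_1)^{1-1/p}\,\|f\|_{L^p(0,T,X)} \leq C(t_1,t_2)\, \|\mf\|_{L^p(0,T,X)},
\end{equation*}
with the obvious modification when $p=\infty$. Hence the set of integrals is bounded in $X$, and the compactness of the embedding $X\hookrightarrow B$ makes it relatively compact in $B$.

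For condition (\ref{condition.2}), the key tool is the Ehrling-type interpolation inequality: since $X\hookrightarrow B$ is compact and $B\hookrightarrow Y$ is continuous, for every $\eta>0$ there exists $C_\eta>0$ such that
\begin{equation*}
\|u\|_B \leq \eta\,\|u\|_X + C_\eta\,\|u\|_Y \qquad \text{for all } u\in X.
\end{equation*}
This is a standard contradiction argument: if it failed, a sequence $u_n\in X$ with $\|u_n\|_B=1$ and $\|u_n\|_X\leq 1/\eta$, $\|u_n\|_Y\to 0$ would admit, by compactness, a subsequence converging in $B$, whose limit would vanish by the continuity $B\hookrightarrow Y$, contradicting $\|u_n\|_B=1$. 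Applying the Ehrling inequality pointwise in $t\in(0,T-h)$ to $\tau_h f(t)-f(t)$ and taking $L^p$ norms in time,
\begin{equation*}
\|\tau_h f-f\|_{L^p(0,T-h,B)} \leq \eta\,\|\tau_h f-f\|_{L^p(0,T-h,X)} + C_\eta\,\|\tau_h f-f\|_{L^p(0,T-h,Y)}.
\end{equation*}
By hypothesis (i) the first factor is bounded by $2\eta\,\|f\|_{L^p(0,T,X)}\leq 2\eta M$ uniformly in $f\in\mf$, while by (ii) the second term tends to $0$ as $h\to 0$ uniformly in $f$. Given $\varepsilon>0$, first fix $\eta$ so that $2\eta M<\varepsilon/2$, then choose $h$ small so that the second term is below $\varepsilon/2$ uniformly in $f$; this yields condition (\ref{condition.2}).

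Having verified both hypotheses of Theorem \ref{simon}, we conclude that $\mf$ is relatively compact in $L^p(0,T,B)$ for $1\leq p<\infty$, and in $C(0,T,B)$ for $p=\infty$ — in the latter case noting that the same Ehrling estimate combined with hypothesis (ii) and (i) yields the uniform continuity in $t$ with values in $B$ required for the $C(0,T,B)$ conclusion. The single substantive obstacle in the whole argument is the Ehrling inequality, which is the precise mechanism by which the compactness of $X\hookrightarrow B$ is actually used; everything else is either Hölder's inequality or a direct quote of Theorem \ref{simon}.
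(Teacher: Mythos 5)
Your proof is correct and follows exactly the route the paper indicates: reduce to Theorem \ref{simon} and supply the missing $B$-equicontinuity via the Ehrling-type interpolation inequality \eqref{ineg.eps}, which the paper explicitly identifies as the key ingredient. The only thing the paper states that you also correctly surface is that inequality \eqref{ineg.eps} is precisely where the compactness of $X\hookrightarrow B$ enters.
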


The last criterion is obtained mainly by using Theorem \ref{simon} and the following inequality that follows from the fact that $X$ is compactly embedded in $B$: for any $\eps>0$ there exists $\eta(\eps)>0$ such that 
\begin{equation}\label{ineg.eps}
\|u\|_B\leq \eps \|u\|_X+\eta(\eps) \|u\|_Y,\quad \forall \, u \in X.
\end{equation}
In the nonlocal setting we will obtain  a similar inequality  in Lemma \ref{localizare.n}.

\medskip
Now we recall  some compactness results that have been proved in the nonlocal context \cite{1103.46310}, \cite{1214.45002} and in a more general setting in \cite{MR2041005}.

\begin{theorem}[\cite{1214.45002}, Th.~6.11, p. ~128]\label{rossi}
Let $1\leq p<\infty$ and $\Omega\subset \rr^d$ be an  open set. Let $\rho:\rr^d\rightarrow \rr^d$ be a nonnegative smooth  radial function with compact support, non identically zero, and $\rho_n(x)=n^d\rho(nx)$. Let $\{f_n\}_{n\geq 1}$ be a bounded sequence in $L^p(\Omega)$ such that
\begin{equation}\label{ros.1}
n^p\int _{\Omega}\int _{\Omega} \rho_n(x-y) |f_n(x)-f_n(y)|^pdxdy\leq {M}.
\end{equation}
The following hold:

1. If $\{f_n\}_{n\geq 1}$ is weakly convergent in $L^p(\Omega)$ to $f$ then $f\in W^{1,p}(\Omega)$ for $p>1$ and $f\in BV(\Omega)$ for $p=1$. Moreover 
$$\|\nabla f\|_{L^{p}(\Omega)}\leq C(\Omega,\rho) M.$$

2. Assuming  that $\Omega$ is a smooth bounded domain in $\rr^d$ and $\rho(x)\geq \rho(y)$ if $|x|\leq |y|$ then $\{f_n\}_{n\geq 1}$ is relatively compact in $L^p(\Omega)$.
\end{theorem}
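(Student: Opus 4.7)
The plan is to prove Part 1 by adapting the Bourgain--Brezis--Mironescu argument underlying Theorem \ref{rossi} Part 1 to the space-time setting, and Part 2 by applying Simon's criterion Theorem \ref{simon} to the family $\{f_n\}_{n\geq 1}$, with condition \eqref{condition.1} reduced to Theorem \ref{rossi} Part 2 and condition \eqref{condition.2} addressed via a nonlocal interpolation inequality in the spirit of \eqref{ineg.eps} (the content of the paper's Lemma \ref{localizare.n}).

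For Part 1, after the change of variables $y=x-z/n$ in \eqref{hyp.2} I rewrite the hypothesis as
\[
\int_{\rr^d} \rho(z)\,n^p\|f_n - \tau_{z/n}^x f_n\|_{L^p((0,T)\times \Omega_{z/n})}^p\,dz \leq M,
\]
where $\tau_h^x g(t,x):=g(t,x-h)$ and $\Omega_h:=\Omega\cap(\Omega+h)$. For each fixed $z\in \mathrm{supp}\,\rho$, testing against $\Theta\in C_c^\infty((0,T)\times\Omega)$ and using the uniform convergence $n[\Theta(t,x)-\Theta(t,x+z/n)]\to -z\cdot\nabla_x\Theta(t,x)$ together with $f_n\rightharpoonup f$ in $L^p$ yields $n(f_n-\tau_{z/n}^x f_n)\rightharpoonup z\cdot\nabla_x f$ in $\mathcal{D}'((0,T)\times\Omega)$. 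Lower semicontinuity of the $L^p$ norm under weak convergence gives
\[
\|z\cdot\nabla_x f\|_{L^p((0,T)\times\Omega)}^p \leq \liminf_{n\to\infty}\|n(f_n-\tau_{z/n}^x f_n)\|_{L^p((0,T)\times\Omega_{z/n})}^p,
\]
and integrating against $\rho(z)\,dz$, using Fatou's lemma and the radial identity $\int\rho(z)|z\cdot v|^p\,dz = K_{p,d,\rho}|v|^p$, yields $K_{p,d,\rho}\|\nabla_x f\|_{L^p((0,T)\times\Omega)}^p\leq M$. This is exactly $f\in L^p(0,T;W^{1,p}(\Omega))$ for $p>1$; the case $p=1$ is analogous using weak-$*$ compactness in $M((0,T)\times\Omega)$ to realize $z\cdot\nabla_x f$ as a bounded measure, giving $f\in L^1(0,T;BV(\Omega))$.

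For Part 2 I verify the two hypotheses of Theorem \ref{simon} with $B=L^p(\Omega)$. For \eqref{condition.1}, fix $0<t_1<t_2<T$ and set $g_n(x)=\int_{t_1}^{t_2}f_n(t,x)\,dt$; Jensen's inequality applied to \eqref{hyp.1} and \eqref{hyp.2} gives the uniform bounds $\|g_n\|_{L^p(\Omega)}^p\leq (t_2-t_1)^{p-1}M$ and $n^p\int_\Omega\int_\Omega \rho_n(x-y)|g_n(x)-g_n(y)|^p\,dx\,dy\leq (t_2-t_1)^{p-1}M$, so Theorem \ref{rossi} Part 2 yields relative compactness of $\{g_n\}_{n\geq 1}$ in $L^p(\Omega)$. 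For \eqref{condition.2}, set $u_n^h(t,x)=f_n(t+h,x)-f_n(t,x)$; the identity $u_n^h(t)=\int_t^{t+h}\partial_s f_n(s)\,ds$ combined with Hölder in time and \eqref{hyp.3} yields $\|u_n^h\|_{L^p(0,T-h;W^{-1,p}(\Omega))}\leq hM^{1/p}$, while the triangle inequality gives the uniform crude bounds $\|u_n^h\|_{L^p((0,T-h)\times\Omega)}\leq 2M^{1/p}$ and $n^p\int_0^{T-h}\int\int\rho_n|u_n^h(t,x)-u_n^h(t,y)|^p\,dx\,dy\,dt\leq 2^p M$. Invoking the nonlocal analog of \eqref{ineg.eps} announced in the paper (Lemma \ref{localizare.n}) of the form
\[
\|u\|_{L^p(\Omega)} \leq \eps\Big(\|u\|_{L^p(\Omega)} + \bigl[n^p{\textstyle\int_\Omega\int_\Omega}\rho_n(x-y)|u(x)-u(y)|^p\,dx\,dy\bigr]^{1/p}\Big)+C(\eps)\|u\|_{W^{-1,p}(\Omega)},
\]
valid for every $n\geq n_0(\eps)$ and every admissible $u$, pointwise in $t$ to $u=u_n^h(t,\cdot)$, then absorbing the $\|u\|_{L^p}$ term, raising to the $p$-th power, and integrating in $t$ gives $\|u_n^h\|_{L^p((0,T-h)\times\Omega)}\leq C\eps M^{1/p}+C(\eps)hM^{1/p}$ for all $n\geq n_0(\eps)$. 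Given $\delta>0$, I choose $\eps$ first and then $h$ small so that the right-hand side is at most $\delta$; the finitely many indices $n<n_0(\eps)$ are handled by the standard $L^p$-continuity of time translations applied to each fixed $f_n\in L^p((0,T)\times\Omega)$.

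The main obstacle is the nonlocal interpolation inequality driving condition \eqref{condition.2}: whereas \eqref{ineg.eps} in the classical Aubin--Lions argument follows automatically from the compact embedding $X\hookrightarrow B$, here the running index $n$ of the sequence is the same as the scale of the kernel $\rho_n$, and the only compactness available is the quantitative BBM-type compactness of Theorem \ref{rossi} Part 2. A compactness-contradiction argument built on that theorem, exploiting the monotonicity assumption $\rho(x)\geq\rho(y)$ for $|x|\leq|y|$ and identifying weak limits via Part 1, seems to be the natural route, with the coupling of $n$ with $\rho_n$ requiring careful bookkeeping. The restriction $p>1$ is essential since $W^{-1,p}=(W_0^{1,p'})^{\ast}$ is what allows \eqref{hyp.3} to be dualized against the gradient information produced by the nonlocal seminorm.
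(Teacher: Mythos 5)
Your proposal is for Theorem \ref{rossi-time}, not for the cited result Theorem \ref{rossi} (which the paper quotes from \cite{1214.45002} without proof); I review it against the paper's proof of the former.

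Part 1 follows the paper's route: the paper simply invokes ``the same arguments as in the proof of Theorem \ref{rossi}'', and your spelling-out (weak $\mathcal{D}'$ identification of $n(f_n-\tau_{z/n}f_n)$, weak lower semicontinuity of the $L^p$-norm, Fatou in $z$, and the radial averaging identity) is the standard Bourgain--Brezis--Mironescu argument and is correct as far as it goes.

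For Part 2 there is a genuine gap, and you have actually misread what Lemma \ref{localizare.n} says. You apply Theorem \ref{simon} directly with $B=L^p(\Omega)$, checking condition \eqref{condition.2} via a ``global'' interpolation inequality
\begin{equation*}
\|u\|_{L^p(\Omega)} \leq \eps\Big(\|u\|_{L^p(\Omega)} + \Big[n^p\int_\Omega\int_\Omega\rho_n(x-y)|u(x)-u(y)|^p\,dx\,dy\Big]^{1/p}\Big)+C(\eps)\|u\|_{W^{-1,p}(\Omega)}
\end{equation*}
on the bounded domain $\Omega$, which you attribute to Lemma \ref{localizare.n}. That lemma in fact only controls the \emph{localized} quantity $\|\chi u\|_{L^p}$ for a cutoff $\chi\in C^1_c(\Omega)$, and the nonlocal seminorm on its right-hand side must be taken over the \emph{enlarged} set $\Omega_{\rho_n}\supset\Omega$, so one needs $u$ defined on all of $\rr^d$. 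The constructive input behind it (Lemma \ref{balance-p}) lives on $\rr^d$, where there is no boundary, and there is no nonlocal extension theorem to transfer it to $\Omega$. Consequently the paper never states or proves your global inequality. Instead it uses the three-step structure you do not follow: (i) Simon's criterion is invoked only in the weaker space $L^p((0,T),W^{-1,p}(\Omega))$, where condition \eqref{condition.2} is immediate from \eqref{hyp.3}; (ii) the localized Lemma \ref{localizare.n}, together with the strong $W^{-1,p}$ convergence from (i) and the $L^p(W^{1,p})$ regularity of the weak limit $f$ from Part 1, gives compactness in $L^p((0,T),L^p_{\mathrm{loc}}(\Omega))$; (iii) a boundary-layer estimate (Lemma \ref{ponce}, quoted from \cite{MR2041005}) closes up to $\partial\Omega$. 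Step (iii) is exactly what replaces the global inequality, and nothing in your proposal substitutes for it.

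That said, the compactness-contradiction argument you gesture at in your last paragraph does close the gap and would give a proof genuinely different from the paper's. Negating the global inequality for some $\eps_0>0$ produces $n_j\to\infty$ and $u_j\in L^p(\Omega)$ with $\|u_j\|_{L^p(\Omega)}=1$, $n_j^p\int_\Omega\int_\Omega\rho_{n_j}(x-y)|u_j(x)-u_j(y)|^p\,dx\,dy\leq(\eps_0^{-1}-1)^p$ and $\|u_j\|_{W^{-1,p}(\Omega)}<1/j$; Theorem \ref{rossi} Part 2 (this is where the monotonicity of $\rho$ and smoothness of $\Omega$ enter) yields a strong $L^p(\Omega)$ limit $u$ of unit norm, while $L^p(\Omega)\hookrightarrow W^{-1,p}(\Omega)$ forces $\|u\|_{W^{-1,p}(\Omega)}=0$, hence $u=0$, a contradiction. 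With this lemma your Simon-based scheme goes through and is shorter than the paper's, at the cost of being non-constructive (no explicit $C(\eps)$ or $n_0(\eps)$). As written, though, you have only identified the missing lemma and speculated about its proof; the proposal is incomplete.
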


We point out that the  assumption on the compact support of function $\rho$ could be removed. In fact once we have estimate \eqref{ros.1} for a function $\rho$ we also have this estimate for any  compactly supported function $\tilde \rho$ with $\tilde \rho\leq \rho$.

%\begin{remark}
%There are another estimates that follows from the proof of the above result:
%$$\Big \|f_n- \frac 1{|\Omega|}\int _{\Omega} f_n\Big\|_{L^p(\Omega)}\leq C(\Omega)M.$$
%Observe that in order to guarantee that sequence $\{f_n\}$ is bounded in $L^p(\Omega)$ it is sufficient to assume that it is bounded in $L^1(\Omega)$.
%\end{remark}

The above results hold under more general assumptions on the weights $\{\rho_n\}_{n\geq 1}$ and on a bounded domain $\Omega$  in $\rr^d$ with Lipshitz boundary.
As proved in \cite[Th.~1.2]{MR2041005} we can assume that  $\{\tilde\rho_n\}_{n\geq 1}$ is a sequence of radially symmetric functions in $L^1(\rr^d)$ satisfying
\begin{equation}\label{cond.rho}
\left\{
\begin{array}{ll}
\tilde \rho_n\geq 0,\ \text{a.e. in }\ \rr^d,\\[10pt]
\int_{\rr^d}\tilde\rho_n(x)=1, \, \forall \ n\geq 1,\\[10pt]
\lim _{n\rightarrow \infty} \int _{|x|>\delta}\tilde \rho_n(x)dx=0, \quad \forall \delta>0
\end{array}
\right.
\end{equation}
and that
$$\int _\Omega \int _\Omega \frac{\tilde \rho_n(x-y)}{|x-y|^p} |f_n(x)-f_n(y)|^pdxdy\leq {M}.$$
Then the results in Theorem \ref{rossi} remain true in dimension $d\geq 2$. In dimension $d=1$ some technical assumptions have to be assumed \cite[Th.~1.3]{MR2041005}. 
Choosing $\tilde \rho_n(x)= n^d (n|x|)^p \rho(nx)$ with $\rho$ radial and decreasing, these technical assumptions hold and we obtain the results in the second part of Theorem \ref{rossi}.
We also recall that under the above conditions on $\tilde \rho_n$ a Poincar\'e  inequality holds \cite[Th.~1.1]{MR2041005}
$$\Big\|f_n- \frac 1{|\Omega|}\int _{\Omega} f_n\Big\|^p_{L^p(\Omega)}\leq C(p,\Omega,\{\rho_n\})  \int _{\Omega}\int _{\Omega} \frac{\tilde\rho_n(x-y)}{|x-y|^p} |f_n(x)-f_n(y)|^pdxdy.$$
In view of this inequality the boundedness of  $\{f_n\}_{n\geq 1}$ in $L^p(\Omega)$ is guaranteed by \eqref{ros.1}  if we assume that   $\{f_n\}_{n\geq 1}$ is bounded in  $L^1(\Omega)$ and $\Omega$ has  finite measure.

%
%
%In our  nonlocal context the key compactness result is given by the following proposition.
%
%\begin{proposition}\label{main}
%Let $f_n$ be a sequence in $L^2(0,T,L^2(\Omega))$ such that
%\begin{equation}\label{hyp.1}
%\| f_n(x)\|_{L^2((0,T)\times \Omega)}\leq M
%\end{equation}
%
%\begin{equation}\label{hyp.2}
%n^2\int _0^T \int_{\Omega}\int _{\Omega} \rho_n(x-y)(f_n(x)-f_n(y))^2dxdy\leq M
%\end{equation}
%and
%\begin{equation}\label{hyp.3}
%\|\partial_tf_n\|_{L^2(0,T,H^{-1}(\Omega))}\leq M.
%\end{equation}
%Then there exists a function $f\in L^2((0,T),H^1(\Omega))$ such that, up to a subsequence, 
%\begin{equation}\label{conclusion}
%f_n\rightarrow f\quad \text{in}\quad  L^2_{loc}((0,T)\times \Omega).
%\end{equation}
%\end{proposition}

\begin{proof}[Proof of Theorem \ref{rossi-time}]
Using the same arguments as in the proof of Theorem \ref{rossi} (see \cite[Ch.~6, p.~128]{1214.45002}) we obtain the results in the first part.

% In the following we will show that for any $\omega$ a smooth bounded domain in $\rr^d$ there exists a subsequence $\{f_{n_k}\}$ that strongly converges to some %function $f$ in $L^2((0,T)\times \Omega)$. By a diagonal argument we will obtain the desired result.

We now prove the second part of the theorem by
following the ideas in \cite{MR916688} but taking into account the particular estimate  \eqref{hyp.2}. From now on, in order to simplify the presentation,  we assume that $\rho$ is  supported in the unit ball.

{\bf Step. I. Compactness in $L^p((0,T),W^{-1,p}(\Omega))$.} We now check the hypotheses in Theorem \ref{simon}. Let us choose $0\leq t_1<t_2\leq T$ and set
$$g_n(x)=\int _{t_1}^{t_2} f_n(s,x)ds. $$
Estimate \eqref{hyp.2} gives us that 
\[
n^p \int _{\Omega}\int _{\Omega} \rho_n(x-y)|g_n(x)-g_n(y)|^pdxdy\leq MT^{p-1}.
\]
Theorem \ref{rossi} applied to sequence $\{g_n\}_{n\geq 1}$ 
shows that there exists $g\in W^{1,p}(\Omega)$ such that, up to a subsequence, $g_n\rightarrow g$ in $L^p(\Omega)$ so in $W^{-1,p}(\Omega)$.
Estimate \eqref{hyp.3} shows that the second requirement in Theorem \ref{simon} is also satisfied. Hence $\{f_n\}_{n\geq 1}$ is relatively compact in
$L^p((0,T),W^{-1,p}(\Omega))$.  

\medskip

{\bf Step. II. Compactness in $L^p((0,T),L_{loc}^p(\Omega))$.} 
%We first prove that $f_n$ is uniformly bounded in $L^2((0,T),L^2(\Omega))$. Using Lemma \ref{incerease} we obtain that
%\[
%\int _0^T \int _{\rr^d}\int _{\rr^d} J(x-y)(f_n(x)-f_n(y))^2\leq MT.
%\]
%Let us define $\of_n$ the restriction of $f_n$ to the set $\Omega$: $\of_n=f_n {\bf 1}_{\Omega}$.   Applying Poincare like inequality (see \cite{1214.45002}, p.~144, Prop.~6.25) we obtain that
%$$
%C(\tilde J,\Omega)\int _{\Omega} \of_n ^2dx\leq \int _{\Omega}\int _{\Omega _{\tilde \rho} } \tilde J(x-y) (\of _n(x)-\of _n(y))^2dxdy,
%$$
%where $\Omega_{\tilde \rho}=\Omega + {\rm supp } ( \rho)$. 
Since $\{f_n\}_{n\geq 1}$ is bounded in $L^p((0,T)\times \Omega)$ then up to a subsequence $\{f_n\}_{n\geq 1} $ weakly converges to some function $f$ in $L^p((0,T)\times \Omega)$. The first part of Theorem \ref{rossi-time}   guarantees that $f\in L^p((0,T),W^{1,p}(\Omega))$.

We now use the strong convergence  in $L^p((0,T),W^{-1,p}(\Omega))$ obtained in Step I,  estimate \eqref{hyp.2} and the fact that $f\in L^p((0,T),W^{1,p}(\Omega))$ to prove that up to a subsequence, $\{f_n\}_{n\geq 1}$ strongly convergences to $f$ in $L^p((0,T)\times \Omega)$. 

To simplify the presentation we will always denote the subsequence by $\{f_n\}_{n\geq 1}$. Also, when possible, we will not write all the constants in inequalities of the type $f\leq Cg$ using  instead  $f\lesssim g$.

In the following we prove that for any   $\Omega'\subset \Omega$ such that $d( \Omega',\partial \Omega)>0$, $\{f_n\}_{n\geq 1}$ is relatively compact in $L^p((0,T)\times \Omega')$. 
From now on for a set $\mathcal{O}$ we will denote 
$$\mathcal{O}_{\rho}=\mathcal{O}+{\rm supp} (\rho)=\{x+\theta, \ x\in \mathcal{O}, \theta \in {\rm supp}(\rho)\}.$$

The following two Lemmas will be very useful in our analysis. Their proof will be given later.

\begin{lemma}\label{est.ariba}Let $\Omega$ be and open set of $\rr^d$.
For any $1<p<\infty $ there exists a positive constant $C(\rho,\Omega,p)$ such that the following inequality 
\begin{equation*}
%\label{ineg-1}
n ^p\int _{\Omega} \int _{\Omega} \rho_n (x-y)|u(x)-u(y)|^pdxdy\leq C(\rho,\Omega,p) \int _{\Omega} |\nabla u|^p
\end{equation*}
holds for all $n >0$ and  $u\in W^{1,p}(\Omega)$.
\end{lemma}

\begin{lemma}\label{localizare.n}
Let $\Omega$ be a bounded domain and $\chi\in C_c^1(\Omega)$. There exists a positive constant $C=C(\Omega, \chi, \rho,p)$ such that for every $\eps\in (0,1)$ the following inequality
\begin{equation}\label{balance.n}
C\int _{\Omega} |\chi u|^p\leq  \eps  n^p \int_{\Omega_\rhon}\int _{\Omega_\rhon} \rho_n(x-y)|u(x)-u(y)|^pdxdy +  \eps \int _{\Omega_\rhon}|u|^p +\frac 1\eps \|u\|_{W^{-1,p}(\Omega)}^p
\end{equation}
holds for all $n\eps^{1/p}\gtrsim 1$ and for all $u\in L^p(\rr^d)$. 
\end{lemma}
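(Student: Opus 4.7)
The plan is to combine a standard mollification/duality split of $\chi u$ with a telescoping step that bridges the scale $\delta$ of the mollifier and the scale $1/n$ of the nonlocal kernel. Let $J=\rho/\|\rho\|_{L^1}$ and $J_\delta(x)=\delta^{-d}J(x/\delta)$, and decompose
\[
\chi u = J_\delta*(\chi u) + \bigl[\chi u - J_\delta*(\chi u)\bigr],
\]
choosing $\delta^p=\epsilon$; the hypothesis $n\epsilon^{1/p}\gtrsim 1$ then translates into $n\delta\gtrsim 1$.

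For the smooth piece $J_\delta*(\chi u)$ I would use duality. For $\phi\in L^{p'}(\Omega)$ with $\|\phi\|_{L^{p'}}=1$, Fubini gives
\[
\int_\Omega \phi \cdot J_\delta*(\chi u)\,dx = \int_{\rr^d} u\cdot\chi\,(J_\delta*\phi)\,dy.
\]
Provided $\delta<d(\mathrm{supp}\,\chi,\partial\Omega)$, the test function $\chi(J_\delta*\phi)$ is supported in $\Omega$ and thus belongs to $W_0^{1,p'}(\Omega)$ with norm $\lesssim\delta^{-1}\|\phi\|_{L^{p'}}$, since the gradient of a mollifier costs a factor $\delta^{-1}$ in $L^1$. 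Duality then yields $\|J_\delta*(\chi u)\|_{L^p(\Omega)}^p\lesssim\delta^{-p}\|u\|_{W^{-1,p}(\Omega)}^p=\epsilon^{-1}\|u\|_{W^{-1,p}(\Omega)}^p$, i.e.\ the third term on the right of \eqref{balance.n}.

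For the fluctuation $\chi u - J_\delta*(\chi u)=\int J_\delta(y)[\chi(x)u(x)-\chi(x-y)u(x-y)]\,dy$, the product identity
\[
\chi(x)u(x)-\chi(x-y)u(x-y) = \chi(x)\bigl(u(x)-u(x-y)\bigr) + \bigl(\chi(x)-\chi(x-y)\bigr)u(x-y)
\]
together with Jensen's inequality splits the $L^p$ norm into two contributions. The second one is controlled by $|\chi(x)-\chi(x-y)|\leq\|\nabla\chi\|_\infty|y|\leq C\delta$ on $|y|\leq\delta$ and Fubini, yielding the bound $C\delta^p\int_{\Omega_{\rho_n}}|u|^p=C\epsilon\int_{\Omega_{\rho_n}}|u|^p$. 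For the first contribution $\int J_\delta(y)\int_{\mathrm{supp}\,\chi}|u(x)-u(x-y)|^p\,dx\,dy$, since $\delta$ need not be comparable to $1/n$ I would telescope
\[
u(x)-u(x-y)=\sum_{k=0}^{K-1}\Bigl[u\bigl(x-\tfrac{k}{K}y\bigr)-u\bigl(x-\tfrac{k+1}{K}y\bigr)\Bigr],\qquad K=\lceil n\delta\rceil,
\]
so that each increment has length $|y|/K\leq 1/n$. Hölder, successive changes of variables $z=x-(k/K)y$ and $w=z-y/K$, and then rescaling to the kernel $\rho_n$, give a bound $CK^p\int_{\Omega_{\rho_n}}\int_{\Omega_{\rho_n}}\rho_n(z-w)|u(z)-u(w)|^p\,dz\,dw$; since $K\leq 2n\delta$ and $\delta^p=\epsilon$, this is exactly $C\epsilon n^p$ times the nonlocal energy.

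The main obstacle is the telescoping step, forced by the conflicting scale requirements: $\delta\sim\epsilon^{1/p}$ is needed to extract the coefficient $\epsilon^{-1}$ from duality, but then $\delta$ is generally much larger than $1/n$, so one must partition each translation of length $\leq\delta$ into $K\sim n\delta$ pieces of length $\leq 1/n$. This is precisely where the assumption $n\epsilon^{1/p}\gtrsim 1$ enters, ensuring $K\geq 1$. A minor geometric point is that all points $x-(k/K)y$ generated by the telescoping lie in $\mathrm{supp}\,\chi+B_\delta$, which is contained in $\Omega_{\rho_n}$ under $\delta+1/n<d(\mathrm{supp}\,\chi,\partial\Omega)$; the remaining range of $\epsilon$ can be treated by a trivial $L^p$ bound absorbed into the $\epsilon\int_{\Omega_{\rho_n}}|u|^p$ term.
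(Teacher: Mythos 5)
Your proof is correct, and it takes a genuinely different route from the paper's. The paper reduces Lemma~\ref{localizare.n} to a global $\rr^d$ interpolation inequality (Lemma~\ref{balance-p}), proved by decomposing $u=\eta*u+(u-\eta*u)$ at the \emph{fixed} nonlocal scale $1/n$ (with $|\nabla\eta|+|\eta|\lesssim\rho$), controlling both pieces by the nonlocal energy, and then invoking for the smooth piece the local $\varepsilon$-inequality $\|v\|_{L^p}\lesssim\varepsilon\|v\|_{W^{1,p}}+\varepsilon^{-1}\|v\|_{W^{-1,p}}$, whose proof for $p\neq 2$ rests on Fourier multiplier theory; the localization from $\rr^d$ to $\Omega$ is then handled separately via Lemmas~\ref{localize-minus} and~\ref{local.sup}. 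You instead mollify at the $\varepsilon$-dependent scale $\delta=\varepsilon^{1/p}$: the smooth piece $J_\delta*(\chi u)$ is dispatched directly by duality against $W^{1,p'}_0(\Omega)$, the factor $\delta^{-p}=\varepsilon^{-1}$ coming simply from $\|\nabla J_\delta\|_{L^1}\simeq\delta^{-1}$, while the fluctuation, after splitting the product in $\chi$ and $u$, is bridged from scale $\delta$ down to scale $1/n$ by telescoping a translation of length $\leq\delta$ into $K=\lceil n\delta\rceil$ increments of length $\leq1/n$; the convexity inequality contributes $K^{p-1}$, the $K$ terms another $K$, and since $K^p\lesssim n^p\delta^p=\varepsilon n^p$ this lands exactly on the desired coefficient for the nonlocal energy. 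This replaces the multiplier-theoretic interpolation by an entirely elementary scaling argument, which is a genuine simplification. One point to make explicit is that comparing the rescaled kernel $K^dJ_\delta(K\cdot)$ (after the change of variables) to $\rho_n$ uses that $\rho$ is radially decreasing, so that $\rho(\sigma s)\leq\rho(ns)$ for $\sigma\in[n,2n]$; this monotonicity is assumed throughout the proof of Theorem~\ref{rossi-time} but is not needed by the paper's Lemma~\ref{balance-p}. You also correctly observe that the containment $\mathrm{supp}\,\chi+B_\delta\subset\Omega_{\rho_n}$ restricts $\delta$ to a fixed range depending on $\chi$ and $\Omega$, with the remaining range of $\varepsilon$ absorbed trivially into the $\varepsilon\int_{\Omega_{\rho_n}}|u|^p$ term.
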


\begin{remark}
In the right hand side of inequality \eqref{balance.n} we have $\eps^{-1}$ and the $W^{-1,p}(\Omega)$-norm.  We believe that  some improvement in \eqref{balance.n} can be done  by allowing the norm of the last term 
 to be in some space $Y$ with $L^p(\Omega)\hookrightarrow Y$ and replacing $\eps^{-1}$ correspondingly. The extension of  Lemma \ref{localizare.n} to general spaces $Y$ will enlarge the class of nonlocal problems where the scaling arguments used in this paper can be applied. 
\end{remark}

Let us fix $\Omega'\subset \Omega$ such that $d(\Omega',\partial \Omega)>0$ and  choose a smooth function $\chi$ compactly supported in $\Omega$ such that $\chi \equiv 1$ in $\Omega'$.
We choose $N_0$ large enough such that 
$\Omega_\rhon'\subset \Omega$ for all $n\geq N_0$.

Applying  Lemma \ref{localizare.n}  with $g=f_n-f$   to the set $\Omega'$
we have for any $n\gtrsim \eps^{-1/p}$  that 
\begin{equation}\label{est.g}
\|\chi g\|_{L^p(\Omega')}^p\lesssim \eps n^p \int _{\Omega_\rhon'}\int _{\Omega_\rhon'}\rho_n(x-y)| g(x)- g(y)|^pdxdy +\eps \int _{\Omega_\rhon'} |g|^p+\frac 1\eps \|g\|_{W^{-1,p}(\Omega')}^p.
\end{equation}
We integrate the above inequality on   the time interval $[0,T]$ and use that
$\|g\|_{W^{-1,p}(\Omega')}\leq \|g\|_{W^{-1,p}(\Omega)}$
to obtain 
\begin{align*}
\int _0^T\int _{\Omega'} \chi^p |f _n-f|^p\lesssim &\eps n^p \int _0^T\int _{\Omega_\rhon'}\int _{\Omega_\rhon'} \rho_n(x-y)| (f_n-f)(t,x)- (f_n-f)(t,y) |^pdxdydt\\
&+\eps \int _0^T\int _{\Omega_\rhon'} |f_n-f|^p
+\frac {1}\eps  \int _0^T \|f_n-f\|_{W^{-1,p}(\Omega)}^p\\
\lesssim & \eps n^p \int _0^T\int _{\Omega_\rhon'}\int _{\Omega_\rhon'} \rho_n(x-y)|   f_n(t,x)- f_n(t,y)|^pdxdydt\\
& +\eps n^p \int _0^T\int _{\Omega_\rhon'}\int _{\Omega_\rhon'}  \rho_n(x-y) | f(t,x)-f(t,y) |^pdxdydt\\
&+\eps \Big(\|f_n\|_{L^p((0,T)\times \Omega'_{\rho_n})}^p+\|f\|_{L^p((0,T)\times \Omega'_{\rho_n})}^p\Big)+\frac {1}\eps  \int _0^T \|f_n(t)-f(t)\|_{W^{-1,p}(\Omega)}^p.
\end{align*}
Since for $n\gtrsim \max\{N_0,\eps^{-1/p}\}$ we have that $\Omega_\rhon'\subset \Omega$ 
we use estimates \eqref{hyp.1}, \eqref{hyp.2}, Lemma \ref{est.ariba} and the fact that $f\in L^p((0,T),W^{1,p} (\Omega))$ to obtain that
$$\int _0^T\int _{\Omega'} \chi^p |f _n-f|^p\lesssim
 \eps (M+\|f \|^p_{L^p((0,T),W^{1,p}(\Omega))})+\frac {1}\eps  \int _0^T \|f_n(t)-f(t)\|_{W^{-1,p}(\Omega)}^pdt.$$
Using Step I, up to a subsequence,  we obtain that for any $\eps\in (0,1)$
$$\limsup _{n\rightarrow \infty} \int _0^T\int _{\Omega'}  |f _n-f|^pdxdt \lesssim \eps (M+\|f \|^p_{L^p((0,T),W^{1,p}(\Omega))}).$$
Then  $f_n$ strongly converges to $f$ in $L^p((0,T)\times \Omega')$.
Applying a standard diagonalisation procedure we can extract a subsequence, denoted again by $\{f_n\}_{n\geq 1}$, such that $f_{n}\rightarrow f$ in $L^p((0,T),L^p_{loc}(\Omega))$.

%Let us now choose $\eps'>0$. We take $\eps={\eps}/4MT$ and $n\geq N_1\sim \eps^{-1/2}$ . Choosing large $N_2\geq \max\{N_0,N_1\}$ we have that
%$$\|f_n-f\|_{L^p((0,T),W^{-1,p}(\Omega))}\leq \frac{\eps\eps'}{2C(\chi)}, \quad n\geq N_2.$$
%Hence 
%$$\|f_n-f\|_{L^p((0,T)\times\Omega')}\leq \eps', \quad n\geq N_2.$$

\medskip

{\bf Step. III. Compactness in $L^p((0,T),L^p(\Omega))$.} We now use the following result in \cite[Lemma~5.1, Lemma~7.2]{MR2041005}.
For a positive number $r>0$ we set
$$\Omega_r:=\{x\in \Omega: d(x,\partial \Omega)>r\}.$$
\begin{lemma}\label{ponce}
Let $\Omega$ be a bounded Lipschitz domain of $\rr^d$.
There exist constants $r_0>0$ depending on $\Omega$ and on $\rho$ and $C_1,C_2$ (depending on $p,\Omega$ and $d$) so that the following holds: given $0<r<r_0$ we can find $N_0\geq 1$ such that
 \begin{equation}\label{est.ponce}
\int _{\Omega} |g|^p\leq C_1 \int _{\Omega_r}|g|^p+C_2 r^p n^p \int _{\Omega}\int _{\Omega} \rho_n(x-y)|g(x)-g(y)|^pdxdy
\end{equation}
for every $g\in L^p(\Omega)$ and $n\geq N_0$.
\end{lemma}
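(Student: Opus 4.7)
The plan is to reduce, via a finite partition of unity for the bounded Lipschitz domain $\Omega$, to a local model in which $\Omega$ is (after rotation) the epigraph of a Lipschitz function, and then in each such local piece to reach $\Omega_r$ from a boundary point by a chain of $k\sim rn$ translations of length $1/n$ in a fixed interior-pointing direction, handling each individual step with the kernel $\rho_n$.

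First, I would cover $\partial\Omega$ by finitely many open sets $U_1,\dots,U_N$ on each of which, after a rotation, $\Omega\cap U_j=\{(x',x_d):x_d>\phi_j(x')\}$ with $\phi_j$ Lipschitz of constant $L_j$, and add an interior patch $U_0$ with $\overline{U_0}\subset\Omega$. With a subordinate partition of unity $\{\chi_j\}$, the contribution of $\chi_0 g$ is bounded by $\int_{\Omega_r}|g|^p$ once $r$ is small enough. In each graph patch, let $e$ be the ``upward'' unit vector; the Lipschitz condition yields $d(x+te,\partial\Omega)\geq t/\sqrt{1+L_j^2}$ for $x\in\Omega\cap U_j$ (and $t$ below a fixed threshold), so translation by $te$ with $t=C_jr$ ($C_j$ depending only on $L_j$) carries $x$ into $\Omega_r$. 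Setting $h=e/n$ and $k=\lceil C_jrn\rceil$, the pointwise inequality $|g(x)|^p\leq 2^{p-1}|g(x+kh)|^p+2^{p-1}|g(x)-g(x+kh)|^p$, combined with the telescoping H\"older bound
\[
|g(x)-g(x+kh)|^p\leq k^{p-1}\sum_{i=0}^{k-1}|g(x+ih)-g(x+(i+1)h)|^p,
\]
and integration over the boundary strip $(\Omega\setminus\Omega_r)\cap U_j$ (with a shift of variables in each summand) would give
\[
\int_{(\Omega\setminus\Omega_r)}\chi_j^p|g|^p\,dx\leq 2^{p-1}\int_{\Omega_r}|g|^p+2^{p-1}k^p\int_\Omega|g(y)-g(y+h)|^p\,dy.
\]

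The crucial third step is to dominate $\int_\Omega|g(y)-g(y+h)|^p\,dy$ by $n^{-p}I_n(g)$, where $I_n(g)=n^p\iint\rho_n(x-y)|g(x)-g(y)|^p\,dx\,dy$. Since $\rho$ is radially non-increasing and not identically zero, there is $r_0>0$ with $\rho\geq c_0>0$ on $B(0,r_0)$. I would replace $h$ by a slightly shortened vector $\tilde h=\delta_0 e/n$ with $B(\delta_0 e,\delta_0/2)\subset B(0,r_0)$ (which only changes $C_j$ by a bounded factor in the previous step) and use the triangle inequality
\[
|g(y)-g(y+\tilde h)|^p\leq 2^{p-1}\bigl(|g(y)-g(y+w/n)|^p+|g(y+w/n)-g(y+\tilde h)|^p\bigr)
\]
for every $w\in B(\delta_0 e,\delta_0/2)$; averaging $w$ against $\rho$ (bounded below on this ball) and changing variables in the second piece yields $\int|g(y)-g(y+\tilde h)|^p\,dy\leq Cn^{-p}I_n(g)$. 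Combined with $k\leq Crn$, this gives the desired bound $C_2 r^p I_n(g)$ for the long-difference contribution, and summing over $j$ finishes the proof. The main obstacle is precisely this last step: the ``direct'' vector $e/n$ sits on $\partial B(0,1)$ where $\rho$ may vanish, so one cannot bound a pure translation difference pointwise-in-$w$ by the $\rho_n$-weighted double integral; the substitution $h\rightsquigarrow\tilde h$ into the region $\{\rho\geq c_0\}$ is what makes the passage quantitative, at the price of absorbing $\rho$-dependent constants into $r_0$.
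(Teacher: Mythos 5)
The paper never proves Lemma \ref{ponce}: it is quoted directly from \cite[Lemmas~5.1 and~7.2]{MR2041005} and used as a black box. Your proposal is therefore a self-contained reconstruction, and its overall structure --- a finite Lipschitz partition near $\partial\Omega$, a telescoping chain of $k\sim rn$ steps of length $1/n$ along an interior-pointing direction, and an averaging trick converting a single translation difference into the $\rho_n$-weighted double integral --- is sound and of the same general flavor as the argument in the cited reference. The telescoping inequality, the Lipschitz distance bound $d(x+te,\partial\Omega)\geq t/\sqrt{1+L_j^2}$, the change of variables $z=y+w/n$ identifying $\rho(w)\,dw\,dy$ with $\rho_n(z-y)\,dz\,dy$ (which also explains the necessity of the prefactor $n^p$), and the use of radial monotonicity of $\rho$ to compare $\rho(w)$ with $\rho(w-\delta_0 e)$ in the second branch of your triangle inequality are all correct.

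The one genuine gap is the choice of averaging ball. In the step
\[
|g(y)-g(y+\tilde h)|^p\leq 2^{p-1}\bigl(|g(y)-g(y+w/n)|^p+|g(y+w/n)-g(y+\tilde h)|^p\bigr),
\]
$g$ is defined only on $\Omega$, and your telescoping chain produces points $y$ arbitrarily close to $\partial\Omega$ (the $i=0$ term is $y=x\in\Omega\setminus\Omega_r$). So every $w$ in the averaging ball must lie in the \emph{interior cone} of the local graph, i.e.\ satisfy $w_d>L_j|w'|$; otherwise $y+w/n$ can fall outside $\Omega$ and the right-hand side is undefined. The ball $B(\delta_0 e,\delta_0/2)$ you chose violates this as soon as $L_j\geq 1$. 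The fix is to take $B(\delta_0 e,\eta)$ with $\eta\leq\delta_0/(L_j+1)$, still keeping $\delta_0+\eta$ below the radius of a ball around the origin on which $\rho$ is bounded below; the constants then legitimately depend on both $\rho$ and the Lipschitz constants of $\Omega$, as the lemma allows, but that dependence is not produced by $B(\delta_0 e,\delta_0/2)$ alone. You should also make explicit the standard shrinking $U_j'\Subset U_j$ so that the entire chain $x,x+\tilde h,\dots,x+k\tilde h$ stays inside the graph coordinate patch for $0<r<r_0$; you gesture at this with ``$t$ below a fixed threshold,'' but it is a load-bearing part of the argument and deserves to be stated.
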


We apply the above Lemma with  $g=f_n-f$ and integrate the resulted inequality on the time interval $(0,T)$. Thus
\begin{align*}
\int _0^T \int _{\Omega}& |f_n-f|^p \\
& \lesssim \int _0^T \int _{\Omega_r}|f_n-f|^p+ r^pn^p \int _0^T \int _{\Omega}\int _{\Omega} \rho_n(x-y)|(f_n-f)(t,x)-(f_n-f)(t,y)|^pdxdydt\\
&\lesssim  \int _0^T \int _{\Omega_r}|f_n-f|^p+ r^pn^p\int _0^T  \int _{\Omega}\int _{\Omega} \rho_n(x-y)|f_n(t,x)-f_n(t,y)|^pdxdydt\\
&\quad + r^pn^p\int _0^T  \int _{\Omega}\int _{\Omega} \rho_n(x-y)|f(t,x)-f(t,y)|^pdxdydt.
\end{align*}
Using estimate \eqref{hyp.2} and Lemma \ref{est.ariba} we get
$$
\int _0^T \int _{\Omega} |f_n-f|^p \lesssim  \int _0^T \int _{\Omega_r}|f_n-f|^p+ r^p M+ r^p \int _0^T \int _{\Omega}|\nabla f|^p.
$$
Since $ f_{n}\rightarrow f$ in $L^p((0,T),L^p_{loc}(\Omega))$ we can let $n\rightarrow \infty$  and then for any $r\in (0,r_0)$ we have 
$$\limsup_{n\rightarrow\infty} \int _0^T \int _\Omega |f_n-f|^p\lesssim r^p \Big(M+\int _0^T \int _{\Omega}|\nabla f|^p\Big ).$$
This implies that, up to a subsequence, $f_n\rightarrow f$ in $L^p((0,T),L^p(\Omega))$ and the  proof of Theorem  \ref{rossi-time} is now finished.
\end{proof}

\begin{proof}[Proof of Lemma \ref{est.ariba}] We first consider the case when  $\Omega=\rr^d$. 
By scaling, it is sufficient to consider the case $n=1$. Since
\begin{equation}\label{taylor-1}
u(x)-u(y)=\int _0^1 (x-y)\cdot \nabla u (y+s(x-y))ds
\end{equation}
we get that
\begin{align*}
\iint _{\rr^{2d}} \rho(x-y)|u(x)-u(y)|^pdxdy&\leq \iint _{\rr^{2d}} \rho(x-y)|x-y|^p  \int _0^1 |\nabla u (y+s(x-y))|^pdsdxdy\\
&= \int _{\rr^d}\rho(z)|z|^p\int _{\rr^d}|\nabla u|^p.
\end{align*}

In the case of a bounded domain $\Omega$ we first extend $u$ to $\rr^d$ such that $\|\nabla u\|_{L^p(\rr^d)}\leq C(\Omega)\|\nabla u\|_{L^p(\Omega)}$.
Then we have
\begin{align*}
n ^p\iint _{\Omega\times \Omega} & \rho_n (x-y)|u(x)-u(y)|^pdxdy\leq 
n ^p\iint _{\rr^{2d}} \rho_n (x-y)|u(x)-u(y)|^pdxdy\\
&\leq C(\rho,p) \int _{\rr^d}  |\nabla u|^p\leq C(\rho,\Omega,p) \int _{\Omega} |\nabla u|^p.
\end{align*}
The proof of Lemma \ref{est.ariba} is now complete.
\end{proof}

The rest of this  subsection is devoted to the proof of Lemma \ref{localizare.n}. 
In order to give its proof we need some auxiliary Lemmas.

%We denote $$J_\lambda(x)=\lambda^dJ(\lambda x).$$

%
%==========================
%
%\textcolor{red}{mai acum nevoie de lemma asta? inainte o foloseai ca sa scoti marginirea fara sa o presupui initial dar acu nu mai facem asa}
%
%==========================
%
%\begin{lemma}\label{incerease} 
%There exists a positive constant $C(J)$ such that for any positive numbers $m<n$ the following 
%\begin{equation}\label{m<n}
%m^2\int _{\rr^d}\int _{\rr^d}J_m(x-y)(u(x)-u(y))^2dxdy
%\leq C(J) 
%n^2\int _{\rr^d}\int _{\rr^d}J_n(x-y)(u(x)-u(y))^2dxdy
%\end{equation}
%holds for all $u\in L^2(\rr^d)$.
%\end{lemma}
%
%
%\begin{proof}
%We apply Plancherel's identity and obtain that \eqref{m<n} is equivalent with
%\[
%m^2(1-\what J(\frac \xi m ))\leq C(J) n^2(1-\what J(\frac \xi n )), \quad \forall \ \xi \in \rr^d.
%\]
%Denoting $\alpha=m/n<1$ it remains to prove that
%\begin{equation}\label{est.323}
%1-\what J( \xi )\leq C(J) \frac{1-\what J(\alpha \xi  )}{\alpha^2}, \quad \forall \ \xi \in \rr^d.
%\end{equation}
%Since $J\in L^1(1+|x|^2)$ it follows that there exists two constants $c_1=c_1(J)$ and $c_2=c_2(J)$ such that
%\begin{equation}\label{est.J}
%\frac{ c_1 |\xi|^2}{1+|\xi|^2}\leq 1-\what J(\xi)\leq  \frac{c_2|\xi|^2}{1+|\xi|^2}, \ \forall \ \xi\in \rr^d.
%\end{equation}
%Using this inequality it follows that \eqref{est.323} holds and the proof is finished.
%\end{proof}
%

\begin{lemma}\label{balance-p}
Let $1<p<\infty$. There exists a positive constant $C=C(\rho,p,d)$ such that 
for every $\eps\in (0,1)$ the following inequality
\begin{equation}\label{est.balance}
C\|u\|_{L^p(\rr^d)}^p\leq \eps \Big[n^p \int_{\rr^d}\int _{\rr^d} \rho_n(x-y)|u(x)-u(y)|^pdxdy + \|u\|_{L^p(\rr^d)}^p\Big]+ \eps^{-1} \|u\|_{W^{-1,p}(\rr^d)}^p
\end{equation}
holds for all $n\eps^{1/p}\gtrsim  1$ and for all $u\in L^p(\rr^d)$.
\end{lemma}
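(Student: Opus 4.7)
The plan is to decompose $u=v+(u-v)$ with $v=u\ast\phi_{1/n}$ for a mollifier $\phi_{1/n}(z)=n^{d}\phi(nz)$ chosen compatibly with $\rho_{n}$, and to bound the two pieces separately: the oscillatory remainder $u-v$ by the nonlocal seminorm via Jensen's inequality, and the smooth part $v$ by a Sobolev interpolation between $W^{1,p}$ and $W^{-1,p}$. Concretely, I would fix $\phi\in C_{c}^{\infty}(\rr^{d})$ nonnegative with $\int\phi=1$, supported in a small ball on which $\rho\geq c_{0}>0$ (such a ball exists because $\rho$ is radial, non-increasing in $|x|$, and not identically zero) and small enough that both $\phi\leq C\rho$ and $|\nabla\phi|\leq C\rho$ hold pointwise. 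Rescaling gives $\phi_{1/n}\leq C\rho_{n}$ and $|\nabla\phi_{1/n}|\leq Cn\rho_{n}$.

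For the remainder, writing $(u-v)(x)=\int\phi_{1/n}(x-y)(u(x)-u(y))\,dy$ and applying Jensen's inequality (since $\phi_{1/n}$ is a probability density), I obtain
\[
\|u-v\|_{L^{p}(\rr^{d})}^{p}\leq\iint\phi_{1/n}(x-y)|u(x)-u(y)|^{p}\,dx\,dy\leq C\iint\rho_{n}(x-y)|u(x)-u(y)|^{p}\,dx\,dy,
\]
which is exactly $Cn^{-p}$ times the nonlocal seminorm on the right of \eqref{est.balance}. The hypothesis $n\eps^{1/p}\gtrsim 1$ gives $n^{-p}\lesssim\eps$, so this contribution fits into the first term on the right of \eqref{est.balance}.

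For the smooth part $v$, using $\int\nabla\phi_{1/n}=0$ I would write $\nabla v(x)=\int\nabla\phi_{1/n}(x-y)(u(y)-u(x))\,dy$; then $|\nabla\phi_{1/n}|\leq Cn\rho_{n}$ together with H\"older's inequality in the $y$-variable yields $\|\nabla v\|_{L^{p}}^{p}\leq Cn^{p}\iint\rho_{n}(x-y)|u(x)-u(y)|^{p}\,dx\,dy$, while elementary convolution and duality estimates using $\|\phi_{1/n}\|_{L^{1}}=\|\phi\|_{L^{1}}$ give $\|v\|_{L^{p}}\leq C\|u\|_{L^{p}}$ and $\|v\|_{W^{-1,p}}\leq C\|u\|_{W^{-1,p}}$. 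I then invoke the Sobolev interpolation
\[
\|v\|_{L^{p}(\rr^{d})}^{2}\leq C\|v\|_{W^{1,p}(\rr^{d})}\|v\|_{W^{-1,p}(\rr^{d})},
\]
valid for $1<p<\infty$ as a consequence of $L^{p}=[W^{1,p},W^{-1,p}]_{1/2}$ in the complex interpolation scale of Bessel potential spaces. Raising to the $p/2$ power and applying Young's inequality $ab\leq\eps a^{2}+\eps^{-1}b^{2}$ to $a=\|v\|_{W^{1,p}}^{p/2},\,b=\|v\|_{W^{-1,p}}^{p/2}$ produces $\|v\|_{L^{p}}^{p}\leq C\eps\|v\|_{W^{1,p}}^{p}+C\eps^{-1}\|v\|_{W^{-1,p}}^{p}$. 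Substituting the previous bounds for $v$ and combining with the estimate on $u-v$ yields exactly the shape of \eqref{est.balance}.

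The single non-routine ingredient, and the place where the hypothesis $p>1$ is essential, is the Sobolev interpolation $\|v\|_{L^{p}}^{2}\lesssim\|v\|_{W^{1,p}}\|v\|_{W^{-1,p}}$. For $p=2$ it is immediate from Plancherel, but for general $p\in(1,\infty)$ one must appeal to complex or real interpolation of the Bessel scale rather than attempt a direct Fourier argument. This is the main obstacle; once granted, the rest is a routine combination of mollification, Jensen's inequality, and H\"older's inequality, with the hypothesis $n\eps^{1/p}\gtrsim 1$ entering only to absorb the factor $n^{-p}$ coming from the Jensen bound on $u-v$.
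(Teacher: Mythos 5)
Your proof is correct and follows essentially the same strategy as the paper's: mollifier decomposition $u=v+(u-v)$, Jensen's inequality to bound the remainder by the nonlocal seminorm, and an $\varepsilon$-interpolation inequality between $W^{1,p}(\rr^d)$ and $W^{-1,p}(\rr^d)$ for the smooth part. The only cosmetic differences are that the paper first rescales so the mollifier sits at unit scale (replacing $W^{-1,p}$ by $\|(I-n^2\Delta)^{-1/2}\cdot\|_{L^p}$) and derives the interpolation inequality via Fourier-multiplier/localization arguments, whereas you mollify directly at scale $1/n$ and obtain the same inequality from the multiplicative bound $\|v\|_{L^p}^2\lesssim\|v\|_{W^{1,p}}\|v\|_{W^{-1,p}}$ plus Young's inequality.
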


Before starting the proof of this Lemma a few comments are needed.
The case $p=2$ is reduced after using  the Fourier transform to the following inequality 
\begin{equation}\label{pt.p=2}
C(\rho)\leq  \eps \Big[n^2 \Big(\widehat\rho(0)-\widehat \rho(\frac \xi n)\Big)+1\Big]+\frac{1}{\eps(1+|\xi|^2)}, \quad \forall \, \xi\in \rr^d. 
\end{equation}
Using that $\rho$ is a smooth radially symmetric function we obtain that its Fourier transform decays at infinity and moreover, $\widehat\rho(0)-\widehat \rho(\xi)\simeq |\xi|^2$ for $\xi\simeq 0$. This shows the existence of two positive constants $c_1$ and $c_2$ such that
\begin{equation}\label{est.J.1}
\frac{ c_1 |\xi|^2}{1+|\xi|^2}\leq \widehat\rho(0)-\what \rho(\xi)\leq  \frac{c_2|\xi|^2}{1+|\xi|^2}, \ \forall \ \xi\in \rr^d.
\end{equation}
This property implies that inequality \eqref{pt.p=2} holds for all $n\gtrsim \eps^{-1/2}$.

%\begin{remark}\label{local-version}
The local version of inequality \eqref{est.balance} is the following one
\begin{align}\label{est.eps}
\|u\|_{L^p(\rr^d)}^p&\lesssim  \eps \|u\|_{W^{1,p}(\rr^d)}^p+\eps^{-1} \|u\|_{W^{-1,p}(\rr^d)}^p\\
\nonumber&= \varepsilon \|(I-\Delta)^{1/2}u\|_{L^p(\rr^d)}^p+\eps^{-1} \|(I-\Delta)^{-1/2}u\|_{L^p(\rr^d)}^p.
\end{align}
We remark that when $p\neq 2$ this inequality is not a consequence of a duality argument since the dual of $W^{1,p}(\rr^d)$ is $W^{-1,p'}(\rr^d)$.
Inequality \eqref{est.eps} holds by  proving that, depending on the Fourier localization of $u$, 
its  $L^p$-norm is controlled by one of the two terms in the right hand side of \eqref{est.eps}.
In fact, for any $0<\beta<1$ using classical multiplier arguments (see  \cite[Ch.~5]{MR2445437}) we have
\begin{equation}\label{star}
\|u\|_{L^p(\rr^d)}\lesssim \beta \|(I-\Delta)^{1/2}u\|_{L^p(\rr^d)}, \quad {\rm supp}\, \widehat u\subset \{\xi: |\xi|\gtrsim  \beta ^{-1}\}
\end{equation}
and
\begin{equation}\label{star2}
\|u\|_{L^p(\rr^d)}\lesssim \beta^{-1} \|(I-\Delta)^{-1/2}u\|_{L^p(\rr^d)}, \quad {\rm supp}\, \widehat u\subset \{\xi: |\xi|\lesssim  \beta ^{-1}\}.
\end{equation}

%\end{remark}

\begin{proof}[Proof of Lemma \ref{balance-p}]
Let us first make a change of variable to avoid the presence of $\rho_n(x)=n^d\rho(nx)$. Estimate \eqref{est.balance} is equivalent to the following one
\begin{align}\label{est.fara.n}
  C\|u\|_{L^p(\rr)}^p \leq \eps  \Big[n^p & \int_{\rr^d}\int _{\rr^d} \rho(x-y)|u(x)-u(y)|^pdxdy+ \|u\|_{L^p(\rr^d)}^p\Big] \\
\nonumber&+ \eps^{-1} \|(I-n^2\Delta)^{-1/2}u\|_{L^p(\rr^d)}^p.
\end{align}
 We  use  a decomposition of $u$ that has already been  used in \cite{MR2542582}. Let us choose $\eta\in C_c^\infty(\rr^d)$ with 
 $$\int _{\rr^d}\eta=1 \quad \text{and} \quad |\nabla \eta|+|\eta| \lesssim \rho.$$ 
 This choice of $\eta$ can be always done if $\rho$ is positive in some open set.
 We write 
 $$u=v+w,\quad v=\eta\ast u, \quad w=u-v.$$
We now emphasize some important properties of $v$ and $w$.
First of all observe that both of them have the $L^p$-norm controlled by the $L^p$-norm of $u$:
\begin{equation}\label{norms}
\|v\|_{L^p(\rr^d)}\leq C(\eta)\|u\|_{L^p(\rr^d)}, \quad \|w\|_{L^p(\rr^d)}\leq C(\eta)\|u\|_{L^p(\rr^d)}
\end{equation}
and moreover
\[
\|u\|_{L^p(\rr^d)}\leq \|v\|_{L^p(\rr^d)}+\|w\|_{L^p(\rr^d)}.
\]
Since the mass of $\eta$ is one we have the following representation for $w$:
\[
w(x)=\int _{\rr^d}\eta(x-y)(u(x)-u(y))dy.
\]
 H\"older's inequality gives us that 
\begin{align}\label{norm.w}
\int _{\rr^d}|w|^p &\leq \Big(\int _{\rr^d}|\eta|\Big)^{p/p'} \int _{\rr^d}\int_{\rr^d} | \eta(x-y)| |u(x)-u(y)|^pdxdy \\
\nonumber &\leq C(\eta,\rho)\int _{\rr^d}\int_{\rr^d}\rho(x-y) |u(x)-u(y)|^pdxdy.
\end{align}
In the case of $v$, since $\int_{\rr^d} \partial_{x_j}\eta=0$, $j=1,\dots,d$,   we write its gradient  as
$$(\nabla v)(x)=(\nabla \eta \ast u)(x)=\int _{\rr^d} \nabla \eta (x-y)(u(x)-u(y))dy.$$
Thus the same argument as before gives us that 
\begin{align}\label{norm.v}
\int _{\rr^d}|\nabla v|^p &\leq C(\eta) \int _{\rr^d}\int_{\rr^d} |\nabla \eta(x-y)| |u(x)-u(y)|^pdxdy\\
\nonumber &\leq C(\eta,\rho)\int _{\rr^d}\int_{\rr^d}\rho(x-y) |u(x)-u(y)|^pdxdy.
\end{align}

We now prove estimate \eqref{est.fara.n}. In view of \eqref{norm.w}  for $\eps n^p \gtrsim 1$ we have that
\begin{equation}\label{norm.w.1}
\int _{\rr^d}|w|^p \lesssim \eps n^p \int_{\rr^d} \int_{\rr^d}\rho(x-y) |u(x)-u(y)|^pdxdy.
\end{equation}
We claim that $v$ satisfies the following inequality for all $\eps\in (0,1)$ and for all $n\geq 1$
\begin{align}\label{local.norm.v}
\|v\|_{L^p(\rr^d)}\lesssim \eps^{1/p} \|(I-n^2\Delta )^{1/2} v\|_{L^p(\rr^d)}+\eps ^{-1/p}\|(I-n^2\Delta )^{-1/2} v\|_{L^p(\rr^d)}.
\end{align}
Estimates  \eqref{local.norm.v}, \eqref{norms} and  \eqref{norm.v}   imply that
\begin{align*}
\|v &\|_{L^p(\rr^d)}^p\lesssim \eps \Big[ \int_{\rr^d} |v|^p + n^p \int _{\rr^d} |\nabla v|^p\Big]+\eps ^{-1}\|(I-n^2\Delta )^{-1/2} v\|_{L^p(\rr^d)}^p\\
&\lesssim \eps \Big[ \int_{\rr^d} |u|^p + n^p \int _{\rr^d}\int_{\rr^d}\rho(x-y) |u(x)-u(y)|^pdxdy\Big]+\eps ^{-1}\|(I-n^2\Delta )^{-1/2} (\eta \ast u)\|_{L^p(\rr^d)}^p\\
 &\lesssim \eps \Big[ \int_{\rr^d} |u|^p + n^p \int _{\rr^d}\int_{\rr^d}\rho(x-y) |u(x)-u(y)|^pdxdy\Big]+\eps ^{-1}\|(I-n^2\Delta )^{-1/2}  u\|^p_{L^p(\rr^d)}.
\end{align*}
Taking into account the above estimate and estimate \eqref{norm.w.1} for $w$, we obtain 
 that \eqref{est.fara.n} holds.
It remains to prove that \eqref{local.norm.v} holds.
Writing explicitly the terms in the right hand side of \eqref{local.norm.v} we reduce it to the case $n=1$. In this case inequality \eqref{local.norm.v} follows from estimates \eqref{star} and \eqref{star2}.
\end{proof}

\begin{lemma}\label{localize-minus}
Let $\Omega$ be a smooth bounded domain of $\rr^d$ and $p \in (1,\infty)$. For any smooth function $\chi$ supported in  $\Omega$ there exists a positive constant $C(\chi)$ such that
\begin{equation}\label{est.-1}
\|\chi u\|_{W^{-1,p}(\rr^d)}\leq C(\chi) \|u\|_{W^{-1,p}(\Omega)}.
\end{equation}
\end{lemma}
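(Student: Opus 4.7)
The natural approach is by duality, using the standard identification of $W^{-1,p}(\Omega)$ with the dual of $W^{1,p'}_0(\Omega)$ (and of $W^{-1,p}(\rr^d)$ with the dual of $W^{1,p'}(\rr^d)$), where $p'=p/(p-1)$. The plan is to show that for every test function $\varphi\in W^{1,p'}(\rr^d)$ the product $\chi\varphi$ lies in $W^{1,p'}_0(\Omega)$ with a norm controlled by $C(\chi)\|\varphi\|_{W^{1,p'}(\rr^d)}$, and then to transfer the pairing $\langle \chi u,\varphi\rangle_{\rr^d}$ to the pairing $\langle u,\chi\varphi\rangle_{\Omega}$.

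More precisely, I would first fix $u\in W^{-1,p}(\Omega)$ and assume first, by density, that $u$ is smooth (or realized as a distribution on $\Omega$ with a concrete representation $u=f_0+\sum_j\partial_{x_j}f_j$, $f_j\in L^p(\Omega)$). For any $\varphi\in C_c^\infty(\rr^d)$ I would write
\[
\langle \chi u,\varphi\rangle_{\rr^d}=\langle u,\chi\varphi\rangle_{\Omega},
\]
which is well-defined because $\chi\varphi$ is smooth and supported in a compact subset of $\Omega$ (here is where the compact support of $\chi$ inside $\Omega$ is used). Then I would bound
\[
|\langle u,\chi\varphi\rangle_{\Omega}|\leq \|u\|_{W^{-1,p}(\Omega)}\,\|\chi\varphi\|_{W^{1,p'}_0(\Omega)}.
\]

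The next step is the product-rule estimate
\[
\|\chi\varphi\|_{W^{1,p'}_0(\Omega)}\leq \|\chi\|_{L^\infty}\|\varphi\|_{L^{p'}(\rr^d)}+\|\chi\|_{L^\infty}\|\nabla\varphi\|_{L^{p'}(\rr^d)}+\|\nabla\chi\|_{L^\infty}\|\varphi\|_{L^{p'}(\rr^d)}\leq C(\chi)\|\varphi\|_{W^{1,p'}(\rr^d)},
\]
which combined with the previous inequality and a density argument (extending the estimate from $C_c^\infty(\rr^d)$ to $W^{1,p'}(\rr^d)$) yields
\[
|\langle \chi u,\varphi\rangle_{\rr^d}|\leq C(\chi)\,\|u\|_{W^{-1,p}(\Omega)}\,\|\varphi\|_{W^{1,p'}(\rr^d)},
\]
so that taking the supremum over $\varphi$ with $\|\varphi\|_{W^{1,p'}(\rr^d)}\leq 1$ gives \eqref{est.-1}.

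There is no deep obstacle: the one point requiring care is to make sure the two dual-space pairings are compatible. Since $\chi\varphi\in W^{1,p'}_0(\Omega)$ (its zero extension outside $\Omega$ still lies in $W^{1,p'}(\rr^d)$ because $\chi$ is compactly supported inside $\Omega$), the identity $\langle \chi u,\varphi\rangle_{\rr^d}=\langle u,\chi\varphi\rangle_\Omega$ holds unambiguously, and the rest of the argument reduces to the elementary product rule in $W^{1,p'}$.
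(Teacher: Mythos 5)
Your proof is correct and takes essentially the same route as the paper: both argue by duality, multiplying the test function $\varphi\in W^{1,p'}(\rr^d)$ by $\chi$ to land in $W^{1,p'}_0(\Omega)$, bounding $\|\chi\varphi\|_{W^{1,p'}_0(\Omega)}$ by the product rule, and pairing against $u$ on $\Omega$. The only cosmetic difference is that the paper works with a maximizing sequence $\varphi_n$ while you take a supremum directly; these are equivalent.
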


\begin{proof}We consider the case of the smooth function $u$. The general case follows by density.
By the definition of the space $W^{-1,p}(\rr^d)$ there exists a sequence $\varphi_n\in W^{1,p'}(\rr^d)$ with $\|\varphi_n\|_{W^{1,p'}(\rr^d)}\leq 1$ such that
\[
<\chi u,\varphi_n> _{W^{-1,p}(\rr^d),W^{1,p'}(\rr^d)}=\int _{\rr^d} \chi u \varphi_n \rightarrow \|\chi u\|_{W^{-1,p}(\rr^d)}.
\]
Since $\chi$ has the support included in $\Omega$, we have $\chi \varphi_n\in W^{1,p'}_0(\Omega)$ and
\[
\|\chi \varphi_n\|_{W^{1,p'}_0(\Omega)}\leq \|\chi\|_{W^{1,\infty}(\Omega)} \|\varphi_n\|_{W^{1,p'}(\rr^d)}\leq C(\chi).
\] 
Hence
\[
\int _{\rr^d} \chi u \varphi_n\leq \|u\|_{W^{-1,p}(\Omega)}\|\chi \varphi_n\|_{W^{1,p'}_0(\Omega)}
\leq C(\chi) \|u\|_{W^{-1,p}(\Omega)}.
\]
Letting   $n\rightarrow \infty$ we obtain the desired estimate.
\end{proof}

\begin{lemma}\label{local.sup}Let $\rho:\rr^d\rightarrow\rr$ be a radial function with compact support, $\rho(0)\neq 0$,
 $\Omega$ be a domain in $\rr^d$ and $1<p<\infty$. For any smooth function $\chi$  supported in 
$\Omega$ there exists a positive constant $C=C(\chi,p,\Omega)$ such that 
 the following inequality
\begin{align}\label{est.sup}
C n^p & \int _{\rr^d}\int _{\rr^d}  \rho_n(x-y)|(\chi u)(x)-(\chi u)(y)|^pdxdy\\
&
\nonumber \leq  n^p \int _{\Omega_{\rho_n}}\int _{\Omega_{\rho_n}} \rho_n(x-y)|u(x)- u(y)|^pdxdy + \Big(\int _{\rr^d}\rho(z)|z|^p\Big)\int _{\Omega_{\rho_n}} |u|^p.
\end{align}
holds for any  $n>0$ and any $u\in L^p(\rr^d)$.
\end{lemma}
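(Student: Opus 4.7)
The plan is to exploit the product-rule style identity
$$(\chi u)(x)-(\chi u)(y)=\chi(x)\bigl(u(x)-u(y)\bigr)+u(y)\bigl(\chi(x)-\chi(y)\bigr)$$
and then use the convexity inequality $|a+b|^p\le 2^{p-1}(|a|^p+|b|^p)$ to split the left-hand side of \eqref{est.sup} into two integrals $I_1$ and $I_2$. The compact support of $\rho$ (hence of $\rho_n$, in a ball of radius $R/n$) together with the compact support of $\chi$ in $\Omega$ should take care of the support bookkeeping: in either integrand, whenever $\rho_n(x-y)$ and one of $\chi(x),\chi(y)$ are both nonzero, both $x$ and $y$ lie in $\Omega+\operatorname{supp}\rho_n=\Omega_{\rho_n}$.

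For $I_1$, after factoring $|\chi(x)|^p\le \|\chi\|_\infty^p$ out and using the support observation above, I would get
$$I_1\le\|\chi\|_\infty^p\, n^p\iint_{\Omega_{\rho_n}\times\Omega_{\rho_n}}\rho_n(x-y)|u(x)-u(y)|^p\,dxdy,$$
which is (up to a constant) exactly the first term on the right-hand side of \eqref{est.sup}. For $I_2$, the mean value theorem gives $|\chi(x)-\chi(y)|\le \|\nabla\chi\|_\infty|x-y|$, so Fubini in the $x$ variable produces the moment $\int_{\rr^d}\rho_n(z)|z|^p\,dz$. A scaling computation $z=w/n$ yields
$$\int_{\rr^d}\rho_n(z)|z|^p\,dz=n^{-p}\int_{\rr^d}\rho(w)|w|^p\,dw,$$
and the factor $n^{-p}$ kills the $n^p$ in front; the remaining $|u(y)|^p\,dy$ integral can be restricted to $\Omega_{\rho_n}$ using the same support reasoning as for $I_1$, giving
$$I_2\le \|\nabla\chi\|_\infty^p\left(\int_{\rr^d}\rho(w)|w|^p\,dw\right)\int_{\Omega_{\rho_n}}|u|^p.$$

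Putting the two pieces together and taking $C^{-1}=2^{p-1}\max\bigl(\|\chi\|_\infty^p,\|\nabla\chi\|_\infty^p\bigr)$ produces \eqref{est.sup}. There is no real obstacle here: the only thing requiring a moment of care is the support bookkeeping (verifying that the nonzero contributions in $I_1$ and $I_2$ are confined to $\Omega_{\rho_n}\times \Omega_{\rho_n}$, respectively $\Omega_{\rho_n}$), which is immediate from the compact support of $\rho$ and $\chi$. The hypothesis $\rho(0)\neq 0$ plays no role in this particular estimate; it matters only for the comparison with $\eta$ used elsewhere in the proof of Lemma \ref{localizare.n}.
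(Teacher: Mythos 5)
Your proof follows the same route as the paper's: the product-rule decomposition of $(\chi u)(x)-(\chi u)(y)$, convexity to split into $I_1$ and $I_2$, the mean-value/Taylor bound on $\chi(x)-\chi(y)$, the change of variables to compute the $p$-th moment of $\rho_n$, and the support argument to localize to $\Omega_{\rho_n}$. All the estimates are correct.

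The one thing you get wrong is the parenthetical claim that the hypothesis $\rho(0)\neq 0$ plays no role here. It does, precisely in the support bookkeeping you rely on. You assert that ``whenever $\rho_n(x-y)$ and one of $\chi(x),\chi(y)$ are both nonzero, both $x$ and $y$ lie in $\Omega_{\rho_n}$.'' From, say, $\chi(x)\neq 0$ you get $x\in\operatorname{supp}\chi\subset\Omega$, and from $\rho_n(x-y)\neq 0$ together with radial symmetry you get $y\in\Omega+\operatorname{supp}\rho_n=\Omega_{\rho_n}$. But concluding that $x$ itself lies in $\Omega_{\rho_n}$ requires $\Omega\subset\Omega_{\rho_n}$, i.e.\ $0\in\operatorname{supp}\rho_n$, i.e.\ $0\in\operatorname{supp}\rho$ --- and that is exactly what $\rho(0)\neq 0$ (together with continuity of $\rho$) guarantees. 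If $\rho$ were supported in an annulus away from the origin, $\Omega+\operatorname{supp}\rho_n$ could fail to contain $\Omega$, and the reduction of the double integral to $\Omega_{\rho_n}\times\Omega_{\rho_n}$ in your bound for $I_1$ (and the restriction of the $y$-integral in $I_2$) would break. So $\rho(0)\neq 0$ is not merely there for the comparison with $\eta$ in the proof of Lemma~\ref{balance-p}; it is also silently doing work in your own support argument.
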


\begin{proof}
Let us first observe that since $\rho$ is radially symmetric and $\rho(0)\neq 0$ we have
$${\rm supp }(\rho_n)=\frac{1}n {\rm supp}(\rho).$$ 
For $x\notin \Omega_\rhon $ and $y\in \Omega$ we have that 
 $\rho_n(x-y)=0$. If $y\notin \Omega$ then $\chi(x)=\chi(y)=0$. Similar things hold if we interchange $x$ and $y$. Hence
\begin{align}\label{iden.1}
n^p  \int _{\rr^d}\int _{\rr^d} \rho_n(x-y)&|(\chi u)(x)-(\chi u)(y)|^pdxdy\\
\nonumber =n^p  &\int _{\Omega_\rhon}\int _{\Omega_\rhon}  \rho_n(x-y)|(\chi u)(x)-(\chi u)(y)|^pdxdy.
\end{align}
Using the following identity
\[
(\chi u)(x)-(\chi u)(y)=\chi(x)(u(x)-u(y))+u(y)(\chi(x)-\chi(y))
\]
we obtain that
\begin{align}\label{iden.2}
n^p  \int _{\Omega_\rhon}\int _{\Omega_\rhon} &  \rho_n(x-y)|(\chi u)(x)-(\chi u)(y)|^pdxdy\\
\nonumber &\lesssim
n^p \|\chi\|_{L^\infty(\Omega)}^p  \int _{\Omega_\rhon}\int _{\Omega_\rhon}  \rho_n(x-y)| u(x)- u(y)|^pdxdy\\
\nonumber &\quad +n^p \int _{\Omega_\rhon}\int _{\Omega_\rhon}  \rho_n(x-y) |u(y)|^p|\chi(x)-\chi(y)|^pdxdy.
\end{align}
Using identity \eqref{taylor-1} for $\chi$ it follows that
\begin{align}
\label{iden.3}
n^p \int _{\Omega_\rhon}& \int _{\Omega_\rhon}  \rho_n(x-y) |u(y)|^p|\chi(x)-\chi(y)|^pdxdy\\
\nonumber &\leq n^p \int _{\Omega_\rhon}\int _{\Omega_\rhon}  \rho_n(x-y) |u(y)|^p|x-y|^p \int _0^1| (\nabla \chi )(y+s(x-y))|^p dsdxdy\\
\nonumber &\leq  \|\chi\|_{W^{1,\infty}(\rr^d)} \int _{\rr^d} \rho(z)|z|^p dz\int _{\Omega_\rhon} |u(y)|^pdy . 
\end{align}
Putting together estimates \eqref{iden.1}, \eqref{iden.2} and \eqref{iden.3} we infer the desired estimate \eqref{est.sup}.
\end{proof}

\begin{proof}[Proof of Lemma \ref{localizare.n}]
From Lemma \ref{balance-p} we know that for any $\eps\in (0,1)$ and $n\eps^{1/p}\gtrsim 1$ the following inequality holds for all $v\in L^p(\rr^d):$ 
$$\|v\|_{L^p(\rr^d)}^p\lesssim \eps n^p \int_{\rr^d}\int _{\rr^d} \rho_n(x-y)|v(x)-v(y)|^pdxdy + \frac {1}\eps \|v\|_{W^{-1,p}(\rr^d)}^p+\eps\|v\|^p_{L^p(\rr^d)}.
$$
We now localize the above inequality by applying it to  $v=\chi u$ where $\chi $ has been extended by zero outside of $\Omega$. Thus
\begin{align*}
\int _\Omega |\chi u|^p&\lesssim \eps n^p \int_{\rr^d}\int _{\rr^d} \rho_n(x-y)|(\chi u)(x)-(\chi u)(y)|^pdxdy + \frac{1}\eps \|\chi u\|_{W^{-1,p}(\rr^d)}^p+\eps \|\chi u\|^p_{L^p(\rr^d)}.
\end{align*}
By Lemma \ref{localize-minus} and Lemma \ref{local.sup}  we deduce  that
\begin{align*}
\int _\Omega |\chi u|^p \lesssim 
 \eps n^p \int_{\Omega_\rhon}\int _{\Omega_\rhon} \rho_n(x-y)|u(x)- u(y)|^pdxdy +\eps\int _{\Omega_\rhon} |u|^p
 +\frac{1}\eps\|u\|_{W^{-1,p}(\Omega)}^p
\end{align*}
and the proof is finished.
\end{proof}

\section{Proof of Theorem \ref{asimp}}\label{proof.main.result}
\setcounter{equation}{0}

Before starting the proof of Theorem \ref{asimp} we need some preliminary results that will be used throughout the proof.

\subsection{Preliminaries}In the following we denote 
$$J_\lambda(x)=\lambda^d J(\lambda x), \, G_\lambda(x)=\lambda^d G(\lambda x),\ \widetilde G(x)=G(-x),\ \widetilde G_\lambda(x)=\lambda^d \widetilde G(\lambda x) .$$
%In our  nonlocal context the key compactness result is given by the following proposition.
%\begin{proposition}\label{main}
%Let $\{f_n\}_{n\geq 1}$ be a sequence in $L^2((0,T),L^2(\rr^d))$ such that
%\begin{equation}\label{hyp.prop.1}
%\|f_n\|_{L^2((0,T)\times \rr^d)}\leq M,
%\end{equation}
%\begin{equation}\label{hyp.prop.2}
%n^2\int _0^T \int_{\rr^d}\int _{\rr^d} J_n(x-y)(f_n(x)-f_n(y))^2dxdy\leq M
%\end{equation}
%and
%\begin{equation}\label{hyp.prop.3}
%\|\partial_tf_n\|_{L^2((0,T),H^{-1}(\rr^d))}\leq M.
%\end{equation}
%Then there exists a function $f\in L^2((0,T),H^1(\rr^d))$ such that, up to a subsequence, 
%\begin{equation}\label{conclusion}
%f_n\rightarrow f\quad \text{in}\quad  L^2_{loc}((0,T)\times \rr^d ).
%\end{equation}
%\end{proposition}
%
%\begin{proof}
%We apply the first step in Theorem \ref{rossi-time} to sequence $\{f_n\}_{n\geq 1}$ and to $\Omega=\rr^d$. Assumptions \eqref{hyp.prop.1} and \eqref{hyp.prop.2} guarantee the  existence of a function
%$f\in L^2((0,T),H^1(\rr^d))$ such that  $f_n$  weakly converges to $f$  in $L^2((0,T)\times \rr^d)$. The strong convergence in \eqref{conclusion} follows from  the second step of 
%Theorem \ref{rossi-time}.
%\end{proof}

%The following Lemmas will be used along the proof of  Theorem \ref{asimp}.

\begin{lemma}\label{parts}
The  following \textit{integration by parts}   identities hold
\begin{align}\label{id}
\int _{\rr^d}(J\ast \Phi -\Phi)&(x)\Psi(x)dx=\int _{\rr^d}\Phi(x)(J\ast \Psi-\Psi)(x)dx\\
\nonumber &=-\frac {1}2 \int_{\rr^d}\int_{\rr^d} J(x-y)(\Phi(x)-\Phi(y))(\Psi(x)-\Psi(y))dxdy
\end{align}
and
\begin{equation}\label{idG}
\int _{\rr^d}(G\ast \Phi -\Phi)(x)\Psi(x)dx=\int _{\rr^d}\Phi(x)(\widetilde G\ast \Psi-\Psi)(x)dx.
\end{equation}
\end{lemma}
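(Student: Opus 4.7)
The plan is a direct Fubini computation exploiting the symmetry of $J$ and the reflection that relates $G$ to $\widetilde G$. No analytic subtlety is needed: all integrals are absolutely convergent whenever $\Phi, \Psi$ are, say, Schwartz functions (or more generally under standard integrability hypotheses), so the proof is purely algebraic reshuffling.

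For the first equality in \eqref{id}, I would start by writing
\[
\int_{\rr^d}(J\ast\Phi)(x)\Psi(x)\,dx=\int_{\rr^d}\int_{\rr^d}J(x-y)\Phi(y)\Psi(x)\,dy\,dx
\]
and apply Fubini. Since $J$ is radially symmetric, $J(x-y)=J(y-x)$, and swapping the roles of $x$ and $y$ yields
\[
\int_{\rr^d}\int_{\rr^d}J(y-x)\Phi(y)\Psi(x)\,dx\,dy=\int_{\rr^d}\Phi(y)(J\ast\Psi)(y)\,dy.
\]
Subtracting $\int\Phi\Psi$ from both sides then gives the first equality of \eqref{id}.

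For the quadratic form representation, I would use the mass-one property $\int_{\rr^d}J(x-y)\,dy=1$ to write $\Phi(x)=\int_{\rr^d}J(x-y)\Phi(x)\,dy$, obtaining
\[
\int_{\rr^d}(J\ast\Phi-\Phi)(x)\Psi(x)\,dx=\int_{\rr^d}\int_{\rr^d}J(x-y)\bigl(\Phi(y)-\Phi(x)\bigr)\Psi(x)\,dy\,dx.
\]
Performing the same manipulation with the roles of $x$ and $y$ swapped (using again $J(x-y)=J(y-x)$) produces the analogous expression with $\bigl(\Phi(x)-\Phi(y)\bigr)\Psi(y)$. Averaging the two expressions collapses them into the desired symmetric double integral $-\tfrac12\iint J(x-y)(\Phi(x)-\Phi(y))(\Psi(x)-\Psi(y))\,dx\,dy$.

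For \eqref{idG}, the argument is even simpler: Fubini gives
\[
\int_{\rr^d}(G\ast\Phi)(x)\Psi(x)\,dx=\int_{\rr^d}\int_{\rr^d}G(x-y)\Phi(y)\Psi(x)\,dy\,dx,
\]
and recognizing that $G(x-y)=\widetilde G(y-x)$ by the definition of $\widetilde G$, the inner integral in $x$ is exactly $(\widetilde G\ast\Psi)(y)$, whence the right-hand side equals $\int\Phi(y)(\widetilde G\ast\Psi)(y)\,dy$. Subtracting $\int\Phi\Psi$ from both sides yields \eqref{idG}. The only step that deserves care is checking that Fubini applies, which is immediate since $J,G\in L^1$ and $\Phi,\Psi$ may be taken sufficiently integrable; so there is no real obstacle.
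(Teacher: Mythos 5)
Your proof is correct and takes the same route as the paper, which simply says ``Use Fubini's theorem and in the first case the fact that $J(-z)=J(z)$.'' You have merely written out the Fubini/symmetrization computation in full, including the mass-one trick and the averaging to obtain the quadratic form, which is exactly the intended argument.
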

\begin{proof}
Use Fubini's theorem and in the first case the fact that $J(-z)=J(z)$. 
\end{proof}

\begin{lemma}\label{lemma5}For any $p\in [1,\infty]$
there exist two positive constants $C(p,J)$ and $C(p,G)$ such that 
\begin{equation}\label{est.J}
\|\lambda^2(J_\lambda\ast \psi-\psi)\|_{L^p(\rr^d)}\leq C(p,J)\|D^2\psi\|_{L^p(\rr^d)}
\end{equation}
and 
\begin{equation}\label{est.G}
\|\lambda(\widetilde G_\lambda\ast \psi-\psi)\|_{L^p(\rr^d)}\leq C(p,G)\|\nabla \psi\|_{L^p(\rr^d)}
\end{equation}
hold for all $\lambda>0$ and $\psi\in C^2_c(\rr^d).$
\end{lemma}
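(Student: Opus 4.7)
The plan is to reduce both estimates to Taylor expansion of $\psi$ combined with Minkowski's integral inequality. First I would remove the scaling by the change of variables $z=\lambda y$:
\[
(J_\lambda\ast\psi-\psi)(x)=\int_{\rr^d}J(z)\bigl(\psi(x-z/\lambda)-\psi(x)\bigr)dz,
\]
and similarly for $\widetilde G_\lambda$. This isolates the factor $1/\lambda$ coming from the inner arguments.

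For \eqref{est.J} the key point is the second-order Taylor formula
\[
\psi(x-z/\lambda)-\psi(x)=-\frac{1}{\lambda}\nabla\psi(x)\cdot z+\frac{1}{\lambda^{2}}\int_{0}^{1}(1-s)\,z^{T}D^{2}\psi(x-sz/\lambda)\,z\,ds.
\]
Since $J$ is radially symmetric, $\int_{\rr^d}J(z)z_{j}dz=0$ for each $j$, so the linear term vanishes after integration against $J$. Therefore
\[
\lambda^{2}(J_\lambda\ast\psi-\psi)(x)=\int_{\rr^d}J(z)\int_{0}^{1}(1-s)\,z^{T}D^{2}\psi(x-sz/\lambda)\,z\,ds\,dz.
\]
I would then apply Minkowski's integral inequality in $L^p(\rr^d)$ together with the translation invariance of the $L^p$-norm to obtain
\[
\|\lambda^{2}(J_\lambda\ast\psi-\psi)\|_{L^{p}(\rr^d)}\leq \frac{1}{2}\Bigl(\int_{\rr^d}J(z)|z|^{2}dz\Bigr)\|D^{2}\psi\|_{L^{p}(\rr^d)},
\]
which is \eqref{est.J} with $C(p,J)=\tfrac12\int J(z)|z|^{2}dz$.

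For \eqref{est.G} no cancellation is needed and first-order Taylor suffices:
\[
\psi(x-z/\lambda)-\psi(x)=-\frac{1}{\lambda}\int_{0}^{1}\nabla\psi(x-sz/\lambda)\cdot z\,ds,
\]
so
\[
\lambda(\widetilde G_\lambda\ast\psi-\psi)(x)=-\int_{\rr^d}\widetilde G(z)\int_{0}^{1}\nabla\psi(x-sz/\lambda)\cdot z\,ds\,dz.
\]
Minkowski's inequality in $L^{p}(\rr^d)$ and translation invariance yield
\[
\|\lambda(\widetilde G_\lambda\ast\psi-\psi)\|_{L^{p}(\rr^d)}\leq \Bigl(\int_{\rr^d}\widetilde G(z)|z|dz\Bigr)\|\nabla\psi\|_{L^{p}(\rr^d)},
\]
which is \eqref{est.G} with $C(p,G)=\int G(z)|z|dz$ (using $\widetilde G(z)=G(-z)$).

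There is no genuine obstacle here; the only thing to be careful about is that the kernels have sufficient decay for the moments $\int J(z)|z|^{2}dz$ and $\int G(z)|z|dz$ to be finite, which is implicit in the standing hypothesis that $J$ and $G$ are smooth with mass one and in the dependence of the constants on $J$ and $G$. The radial symmetry of $J$ is essential in \eqref{est.J} to kill the linear Taylor term, which is what allows the quadratic $1/\lambda^{2}$ rate rather than only $1/\lambda$.
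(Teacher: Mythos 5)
Your proof is correct and follows essentially the same approach as the paper: Taylor-expand $\psi$ to second (resp.\ first) order, use the radial symmetry of $J$ to annihilate the first-order term, and estimate the remainder in $L^p$. The only cosmetic difference is that you invoke Minkowski's integral inequality directly for every $p\in[1,\infty]$, whereas the paper verifies the endpoint cases $p=1$ and $p=\infty$ and interpolates.
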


\begin{proof}
We treat the cases $p=1$ and $p=\infty$ since the other cases follow by interpolation.
  A Taylor expansion up to the second order gives us that  for any $x,y\in \rr^d$ the following holds  
$$\psi(y)-\psi(x)= \nabla \psi(x) (y-x)+  \int _0^1(1-s) (y-x) D^2 \psi(x+s(y-x))(y-x)^t ds.$$
After a change of variables we have
\begin{align*}\lambda^2 (J_{\lambda}\ast \psi-\psi)(x)&=
\lambda^{d+2}\int_{\rr^d}  J(\lambda(x-y))(\psi(y)-\psi(x))dy=\lambda^2 \int _{\rr^d}J(z)\Big(\psi(x-\frac z\lambda)-\psi(x)\Big)dz\\
&=\lambda^2  \int _{\rr^d}J(z)\Big[ -\frac z\lambda \cdot \nabla \psi(x)+\frac 1{\lambda^2}\int _0^1 (1-s)z D^2\psi (x-\frac{sz}\lambda)z^t ds \Big]dz.
\end{align*}
Since $J$ is radially symmetric we have
\begin{equation}\label{moment.J}
\int _{\rr^d}J(z)z_jdz=0 \quad \text{for all}\ j=1,\dots, d
\end{equation}
and
\begin{equation}\label{second.moment}
\int _{\rr^d}J(z)z_jz_kdz=0 \quad \text{for all}\ 1\leq j\neq k\le  d.
\end{equation}
Those identities give us that
\begin{equation}\label{taylor.j}
\lambda^2 (J_{\lambda}\ast \psi-\psi)(x)= \sum_{j,k=1}^d \int _0^1 (1-s) \int _{\rr^d}J(z)z_j z_k\frac{\partial^2\psi}{\partial x_j \partial x_k} (x-\frac{sz}\lambda)dz\;ds
\end{equation}
and then for $p\in \{1,\infty\}$ inequality \eqref{est.J} holds  with $C(J)=\frac 12\int _{\rr^d}J(z)|z|^2dz$.

In the case of the second estimate \eqref{est.G} we use the  identity:
$$\psi(y)-\psi(x)= \int_0^1 (y-x)\cdot \nabla \psi(x+s(y-x))ds$$
to obtain 
\begin{align}\label{taylor.g}
\lambda(\widetilde G_\lambda &\ast \psi-\psi) (x)= \lambda ^{d+1}\int _{\rr^d}
\tilde G(\lambda (x-y))(\psi (y)-\psi(x))dy\\
&= \lambda \int _{\rr^d} \tilde G(z)\Big(\psi (x-\frac z\lambda)-\psi(x)\Big)dy
\nonumber=\int_{\rr^d}\tilde G(z)\int _0^1 z\cdot \nabla \psi (x-\frac{sz}\lambda)dsdz \\
&=
\nonumber\int_{\rr^d} G(z)\int _0^1 z\cdot \nabla \psi (x+\frac{sz}\lambda)dsdz.
\end{align}
Using the same  arguments as in the first case we obtain the second estimate.
\end{proof}

\medskip

\subsection{Proof of Theorem \ref{asimp}}
We consider the family $\{u_\lambda(t)\}_{\lambda>0}$ defined by
 $$u_\lambda(t,x)=\lambda^d u(\lambda^2 t,\lambda x).$$
  It follows that $u_\lambda$ is a solution of the following
rescaled equation
\begin{equation}\label{rescal}
\left\{
\begin{array}{ll}
(u_\lambda)_t=\lambda^2(J_\lambda\ast u_\lambda-u_\lambda)+\lambda^{d(1-q)+2}(G_\lambda\ast u_\lambda^q -u_\lambda^q),& x\in \rr^d,t>0,\\[10pt]
u_\lambda(0,x)=\varphi_\lambda(x),
\end{array}
\right.
\end{equation}
where $\varphi_\lambda(x)=\lambda^d \varphi(\lambda x)$.

The proof of Theorem \ref{asimp} is divided into four steps. 

%\begin{lemma}\label{Lintegral} There exists a positive constant $C$ such that
%\begin{equation}\label{integral}
%\lambda ^{-1} \int_{0}^{\lambda^2 t} (1+s)^{-\frac{d}{2}(q-1)}\; ds\leq C\; t^{\frac{1}{2}}
%\end{equation}
%holds true for any positive $t$.
%\end{lemma}
%\begin{proof}
%Since for any $q\geq 1+\frac 1d$
%$$\lim _{x\rightarrow 0} x^{-1}\int _0^{x^2}(1+ s)^{-\frac {d}{2}(q-1)}\; ds=0$$
%and
%$$ 
%\lim _{x\rightarrow \infty} x^{-1}\int _0^{x^2} (1+ s)^{-\frac {d}{2}(q-1)}\; ds= \lim _{x\rightarrow \infty} \frac{2x}{(1+ x^2)^{\frac {d}{2}(q-1)}}=
%\left\{
%\begin{array}{ll}
%0,& q>1+\frac 1d\\[10pt]
%<\infty, & q=1+\frac 1d,
%\end{array}
%\right.$$
%we find that
%$$ \left(\lambda t^{\frac{1}{2}}\right) ^{-1} \int_{0}^{\lambda^2 t} (1+s)^{-\frac{d}{2}(q-1)}\; ds\leq C
%$$
%which is proves (\ref{integral}).
%\end{proof}
%

\medskip

\textbf{Step I. {Estimates on the rescaled solutions $u_\lambda$.}} We recall  \cite[Theorem 1.4]{MR2356418} that  solution $u$ of system \eqref{cd} satisfies for any $p\in [1,\infty)$ and $t>0$ the following  estimate
\begin{equation}\label{normLpU}
\|u(t)\|_{L^p(\rr^d)}\leq  C(p,\|\varphi\|_{L^1(\rr^d)}, \|\varphi\|_{L^\infty(\rr^d)}) (t+1)^{-\frac{d}{2} \left(1-\frac{1}{p}\right)}.
\end{equation}

In the sequel we will denote by $C$ a constant that may change from line to line, may depend  on $\|\varphi\|_{L^1(\rr^d)}$ and $\|\varphi\|_{L^\infty(\rr^d)}$
but it is independent of the scaling parameter $\lambda$. 
In the following lemmas the constant $M$ will  depend on $\|\varphi\|_{L^1(\rr^d)}$ and  $\|\varphi\|_{L^\infty(\rr^d)}$. We will not make explicit this dependence unless this is necessary. 

\begin{lemma} \label{linfty-l2}
For any $0<t_1<t_2<\infty$ and $p\in [1,\infty)$ there exists a positive constant $M=M(t_1,p)$ %which depends on ...  
such that
$$\|u_{\lambda}\|_{L^\infty((t_1,t_2),\, L^p(\rr^d))}\leq M$$
holds for all $\lambda>0$.
\end{lemma}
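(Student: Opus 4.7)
The plan is a direct scaling computation combined with the decay estimate \eqref{normLpU}. The key observation is that the definition $u_\lambda(t,x)=\lambda^d u(\lambda^2 t,\lambda x)$ is precisely the scaling that preserves the $L^1$ norm, so higher $L^p$ norms inherit a factor $\lambda^{d(1-1/p)}$ which will be cancelled by the decay.

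First I would compute
\[
\|u_\lambda(t)\|_{L^p(\rr^d)}^p = \int_{\rr^d} \lambda^{dp}|u(\lambda^2 t,\lambda x)|^p\,dx = \lambda^{d(p-1)} \|u(\lambda^2 t)\|_{L^p(\rr^d)}^p,
\]
so that $\|u_\lambda(t)\|_{L^p(\rr^d)} = \lambda^{d(1-1/p)} \|u(\lambda^2 t)\|_{L^p(\rr^d)}$. Applying \eqref{normLpU} to the right-hand side yields
\[
\|u_\lambda(t)\|_{L^p(\rr^d)} \leq C\, \lambda^{d(1-1/p)} (\lambda^2 t+1)^{-\frac{d}{2}(1-\frac{1}{p})}.
\]

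Next, for $t \geq t_1 > 0$ I bound $\lambda^2 t+1 \geq \lambda^2 t_1$, which gives
\[
\|u_\lambda(t)\|_{L^p(\rr^d)} \leq C\, \lambda^{d(1-1/p)} (\lambda^2 t_1)^{-\frac{d}{2}(1-\frac{1}{p})} = C\, t_1^{-\frac{d}{2}(1-\frac{1}{p})}.
\]
This bound is independent of both $\lambda>0$ and $t\in[t_1,t_2]$, so it furnishes the required constant $M = M(t_1,p)$. Taking the supremum over $t\in(t_1,t_2)$ concludes the lemma for $p>1$; the case $p=1$ is even more direct, since $\|u_\lambda(t)\|_{L^1(\rr^d)}=\|u(\lambda^2 t)\|_{L^1(\rr^d)} \leq \|\varphi\|_{L^1(\rr^d)}$ by mass conservation.

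There is no serious obstacle here; the proof is essentially a one-line check. The only subtle point is that the restriction $t_1>0$ is indispensable, because otherwise as $\lambda\to 0$ the quantity $(\lambda^2 t+1)^{-d(1-1/p)/2}$ tends to $1$ and fails to cancel the prefactor $\lambda^{d(1-1/p)}$; in other words, the uniform bound blows up as $t\to 0^+$, which is consistent with the fact that one cannot expect a uniform-in-$\lambda$ $L^p$ bound at the initial time for $p>1$ (the initial data $\varphi_\lambda$ concentrates to $m\delta_0$).
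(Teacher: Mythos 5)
Your proof is correct and coincides with the paper's: the same scaling identity $\|u_\lambda(t)\|_{L^p}=\lambda^{d(1-1/p)}\|u(\lambda^2 t)\|_{L^p}$ is combined with \eqref{normLpU}, and the bound $\lambda^2 t+1\geq\lambda^2 t_1$ gives the $\lambda$-independent constant. One small slip in your closing remark: the direction in which the prefactor $\lambda^{d(1-1/p)}$ threatens to blow up is $\lambda\to\infty$, not $\lambda\to 0$ (for $\lambda\to 0$ the prefactor vanishes and the bound is trivially fine), though your conclusion that the bound degenerates as $t_1\to 0^+$ is correct.
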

\begin{proof}
Using  estimate \eqref{normLpU} and the fact that the rescaled solutions satisfy
 $$\|u_{\lambda}(t)\|_{L^p(\rr^d)} =\lambda^{d\left(1-\frac{1}{p}\right)}\|u(\lambda^2 t)\|_{L^p(\rr^d)},$$ we deduce that
for any $p\in [1,\infty)$ and $t>0$ the following inequality holds for all $\lambda>0$:
\begin{equation}\label{normLp}
\|u_{\lambda}(t)\|_{L^p(\rr^d)}\leq  C \left(\frac{\lambda^2}{\lambda^2 t+1}\right)^{\frac{d}{2} \left(1-\frac{1}{p}\right)}\leq  C t^{-\frac{d}{2} \left(1-\frac{1}{p}\right)}.
\end{equation}
Using that $t\geq t_1$ we obtain the desired estimate.
\end{proof}

\begin{lemma} \label{est.2.J}
For  any $0<t_1<t_2<\infty$ there exists a positive constant
$M=M(t_1)$  such that the following inequality 
$$\lambda^2 \int_{t_1}^{t_2} \int_{\rr^d}\int_{\rr^d} J_{\lambda} (x-y) (u_{\lambda}(t,x)-u_{\lambda}(t,y))^2\; dxdy dt\leq M$$
holds for all $\lambda>0$.
\end{lemma}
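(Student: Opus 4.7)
My plan is to run an $L^2$ energy estimate on the rescaled equation \eqref{rescal}. Multiplying \eqref{rescal} by $u_\lambda$, integrating on $\rr^d\times(t_1,t_2)$, and applying the integration-by-parts identity \eqref{id} with $\Phi=\Psi=u_\lambda$ to the linear part converts it to the sought dissipation and yields, after rearranging,
$$\frac{\lambda^2}{2}\int_{t_1}^{t_2}\iint J_\lambda(x-y)(u_\lambda(t,x)-u_\lambda(t,y))^2\,dxdydt\le\frac12\|u_\lambda(t_1)\|_{L^2}^2+\lambda^{d(1-q)+2}\Bigl|\int_{t_1}^{t_2}\!\int u_\lambda\,(G_\lambda\ast u_\lambda^q-u_\lambda^q)\,dxdt\Bigr|.$$
The first term on the right is $\le M(t_1)$ by Lemma \ref{linfty-l2} (with $p=2$), and the $-\|u_\lambda(t_2)\|_{L^2}^2$ piece has been discarded, so the whole problem reduces to absorbing part of the nonlinear integral into the dissipation on the left.

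To handle the nonlinear contribution I would split $G=G_s+G_a$ into symmetric and antisymmetric parts. Identity \eqref{id} applied to $G_s$ produces
$-\tfrac12\iint G_{s,\lambda}(x-y)(u_\lambda(x)-u_\lambda(y))(u_\lambda^q(x)-u_\lambda^q(y))\,dxdy$,
which is $\le 0$ because $(a-b)(a^q-b^q)\ge0$ for $a,b\ge0$ and $q\ge1$, and can simply be dropped. For the antisymmetric part, the cancellation $\int G_{a,\lambda}=0$ lets me write $\int u_\lambda\,G_{a,\lambda}\ast u_\lambda^q=\iint u_\lambda^q(x)\,G_{a,\lambda}(x-y)(u_\lambda(y)-u_\lambda(x))\,dxdy$ (up to a sign, using $\widetilde G_a=-G_a$), and Cauchy--Schwarz in both variables together with $\int|G_{a,\lambda}|=\|G_a\|_{L^1}$ yields
$$\Bigl|\int u_\lambda\,G_{a,\lambda}\ast u_\lambda^q\,dx\Bigr|\le C\,\|u_\lambda\|_{L^{2q}(\rr^d)}^q\Bigl(\iint|G_{a,\lambda}(x-y)|\,(u_\lambda(x)-u_\lambda(y))^2\,dxdy\Bigr)^{1/2}.$$
The crux of the argument, and the main obstacle I anticipate, is the comparison $\iint|G_{a,\lambda}(x-y)|(u_\lambda(x)-u_\lambda(y))^2\,dxdy\lesssim\iint J_\lambda(x-y)(u_\lambda(x)-u_\lambda(y))^2\,dxdy$. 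This holds trivially under a pointwise bound $|G_a|\lesssim J$ (natural when $G$ and $J$ have comparable compact supports with $J$ bounded below on the support of $G_a$); in the general case one would split into short-range and long-range parts, using the fact that $J$ is positive near $0$ and $G_a(0)=0$ (so $|G_a|$ vanishes at least linearly near $0$) for the short range, and the $L^1$ finiteness of $|G_a|$ combined with $(u_\lambda(x)-u_\lambda(y))^2\le 2(u_\lambda(x)^2+u_\lambda(y)^2)$ for the long range.

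With the comparison in hand, Young's inequality $ab\le\tfrac14 a^2+b^2$ turns the above bound into $\tfrac14\cdot\lambda^2\iint J_\lambda(\cdots)^2+C\,\lambda^{2[d(1-q)+1]}\|u_\lambda(t)\|_{L^{2q}}^{2q}$; the first summand is absorbed into the left-hand side of the energy inequality. What remains is the uniform bound $\lambda^{2[d(1-q)+1]}\int_{t_1}^{t_2}\|u_\lambda(t)\|_{L^{2q}}^{2q}\,dt\le M(t_1)$, independent of $\lambda>0$ and of $t_2$. Plugging in $\|u_\lambda(t)\|_{L^{2q}}^{2q}\le C(\lambda^2/(\lambda^2 t+1))^{d(2q-1)/2}$ from Lemma \ref{linfty-l2} and carrying out the substitution $\tau=\lambda^2 t+1$, the algebraic identity $2[d(1-q)+1]+d(2q-1)-2=d-2$ and the boundary factor $(\lambda^2 t_1+1)^{1-d(2q-1)/2}$ combine to give a bound by $\lambda^{2(d+1-dq)}\,t_1^{1-d(2q-1)/2}$ in the regime $\lambda^2 t_1\ge 1$ (finite since $d+1-dq\le 0$ for $q\ge 1+1/d$) and by $t_1^{-d/2}$ in the regime $\lambda^2 t_1<1$; the $\tau$-integral itself converges because $q\ge 1+1/d$ forces $d(2q-1)/2>1$. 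Combining all estimates completes the proof.
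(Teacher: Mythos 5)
Your energy-estimate setup and the use of identity \eqref{id} on the linear part are correct, and the symmetric-part monotonicity argument is fine. But the route you take through the nonlinear term is both more complicated than needed and, as written, incomplete: the comparison $\iint|G_{a,\lambda}(x-y)|(u_\lambda(x)-u_\lambda(y))^2\,dxdy\lesssim\iint J_\lambda(x-y)(u_\lambda(x)-u_\lambda(y))^2\,dxdy$ is not available under the paper's hypotheses. Nothing relates the pointwise sizes or supports of $G$ and $J$ (only that each is nonnegative, smooth, of unit mass, with $J$ radial), so the pointwise bound $|G_a|\lesssim J$ may simply fail, and your ``long-range'' fallback produces a term of the form $\lambda^{d(1-q)+2}\|u_\lambda\|_{L^{2q}}^q\|u_\lambda\|_{L^2}$ that, after the time integration and the substitution $\tau=\lambda^2 t$, carries the uncontrolled power $\lambda^{d+1-d q}$ at the critical exponent only borderline and in general does not close without the short-range dissipation absorbing it.

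The paper avoids the antisymmetric/symmetric split entirely. Since $u_\lambda\ge 0$, $G_\lambda\ge 0$, and $\int G_\lambda=1$, Young's inequality with exponents $(q+1)/q$ and $q+1$ gives pointwise
\begin{equation*}
G_\lambda(x-y)\,u_\lambda^q(y)\,u_\lambda(x)\le G_\lambda(x-y)\Bigl(\tfrac{q}{q+1}u_\lambda^{q+1}(y)+\tfrac{1}{q+1}u_\lambda^{q+1}(x)\Bigr),
\end{equation*}
and integrating in $x$ and $y$ yields $\int (G_\lambda\ast u_\lambda^q)\,u_\lambda\le \int u_\lambda^{q+1}$. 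Hence the \emph{entire} nonlinear contribution $\lambda^{d(1-q)+2}\int (G_\lambda\ast u_\lambda^q-u_\lambda^q)u_\lambda$ is nonpositive and can be discarded at once, with no comparison between $G$ and $J$ and no use of the decay rates beyond $p=2$. After that, integrating \eqref{energy} over $(t_1,t_2)$, dropping $\|u_\lambda(t_2)\|_{L^2}^2$, and applying \eqref{normLp} with $p=2$ gives the claim with $M\lesssim t_1^{-d/2}$. This is precisely the ``good sign of the nonlinearity'' highlighted in the introduction; you should fold your symmetric/antisymmetric observation into this single Young's-inequality step and delete the $G_a$ vs.\ $J$ comparison, which is the point where your argument breaks down.
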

\begin{proof}
Multiplying  (\ref{rescal}) by $u_{\lambda}$ and integrating over $\rr^d$ we get
\begin{align}\label{energy}
\frac 12\frac{d}{dt} \|u_{\lambda}(t)\|^2_{L^2(\rr^d)} &=\int_{\rr^d} \lambda^2 (J_{\lambda}\ast  u_{\lambda}-u_{\lambda}) u_{\lambda}(t)\; dx+\int_{\rr^d} \lambda^{d(1-q)+2} (G_{\lambda}\ast u^q_{\lambda}-u^q_{\lambda})\; u_{\lambda}(t)\; dx.
\end{align}
Using that  $G_\lambda$ has mass one the last term in the above identity is negative. Indeed,
\begin{align*}
\int_{\rr^d}  (G_{\lambda}\ast &u^q_{\lambda})(t,x) u_{\lambda}(t,x)\; dx=\int_{\rr^d} \int _{\rr^d} G_\lambda (x-y)u_\lambda^q(t,y)u_\lambda(t,x)dxdy\\
&\leq \int_{\rr^d} \int _{\rr^d} G_\lambda (x-y) \Big( \frac {q}{q+1} u_\lambda^{q+1}(t,y)+\frac 1{q+1}u_\lambda^{q+1}(t,x)\Big)dxdy=\int _{\rr^d}u_\lambda^{q+1}(t,x)dx.
\end{align*}
Next,  integrating \eqref{energy} over the interval $(t_1, t_2)$ and using identity \eqref{id} we obtain 
$$ \|u_{\lambda}(t_2)\|^2_{L^2(\rr^d)}+{\lambda^2} \int_{t_1}^{t_2} \int_{\rr^d}\int_{\rr^d} J_{\lambda} (x-y) (u_{\lambda}(t,x)-u_{\lambda}(t,y))^2\; dxdydt\leq \|u_{\lambda}(t_1)\|^2_{L^2(\rr^d)}.$$ 
Using inequality (\ref{normLp})  in the  case $p=2$ we conclude that
$${\lambda^2}\int_{t_1}^{t_2} \int_{\rr^d}\int_{\rr^d} J_{\lambda} (x-y) (u_{\lambda}(t,x)-u_{\lambda}(t,y))^2\; dxdydt\leq C t_1^{-\frac{d}{2}}$$
and the proof finishes.
\end{proof}

\begin{lemma}\label{tderivative}
 For any  $0<t_1<t_2<\infty$ there exists a positive constant $M=M(t_1)$ such that
$$\|u_{\lambda,t}\|_{L^2((t_1,t_2),\, H^{-1}(\rr^d))}\leq M$$
holds for all $\lambda>1$.
\end{lemma}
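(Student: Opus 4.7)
The starting point is the rescaled equation \eqref{rescal}, which gives the pointwise-in-time decomposition
\[
u_{\lambda,t}=\lambda^{2}(J_{\lambda}\ast u_{\lambda}-u_{\lambda})+\lambda^{d(1-q)+2}(G_{\lambda}\ast u_{\lambda}^{q}-u_{\lambda}^{q}).
\]
My plan is to bound the $H^{-1}(\rr^{d})$-norm of each term separately by duality against an arbitrary $\psi\in H^{1}(\rr^{d})$ and then integrate in $t\in(t_{1},t_{2})$.

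For the diffusive term I would use the symmetry identity \eqref{id} to write
\[
\int_{\rr^{d}}\lambda^{2}(J_{\lambda}\ast u_{\lambda}-u_{\lambda})\psi\,dx
=-\frac{\lambda^{2}}{2}\iint J_{\lambda}(x-y)\bigl(u_{\lambda}(x)-u_{\lambda}(y)\bigr)\bigl(\psi(x)-\psi(y)\bigr)dxdy.
\]
Cauchy--Schwarz in $(x,y)$, followed by Lemma \ref{est.ariba} applied to $\psi$ (with $p=2$), yields
\[
\|\lambda^{2}(J_{\lambda}\ast u_{\lambda}-u_{\lambda})\|_{H^{-1}(\rr^{d})}^{2}
\lesssim \lambda^{2}\iint J_{\lambda}(x-y)\bigl(u_{\lambda}(t,x)-u_{\lambda}(t,y)\bigr)^{2}dxdy,
\]
and integrating in $t$ reduces this contribution to Lemma \ref{est.2.J}.

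For the convective term I would use the transposition identity \eqref{idG} to transfer the convolution onto $\psi$, so that
\[
\int_{\rr^{d}}(G_{\lambda}\ast u_{\lambda}^{q}-u_{\lambda}^{q})\psi\,dx
=\int_{\rr^{d}}u_{\lambda}^{q}(\widetilde G_{\lambda}\ast\psi-\psi)\,dx.
\]
Cauchy--Schwarz and the consistency estimate \eqref{est.G} of Lemma \ref{lemma5} with $p=2$ give
\[
\|\lambda^{d(1-q)+2}(G_{\lambda}\ast u_{\lambda}^{q}-u_{\lambda}^{q})\|_{H^{-1}(\rr^{d})}
\lesssim \lambda^{d(1-q)+1}\|u_{\lambda}(t)\|_{L^{2q}(\rr^{d})}^{q}.
\]
The criticality assumption $q\geq 1+\tfrac{1}{d}$ forces $d(1-q)+1\leq 0$, so $\lambda^{d(1-q)+1}\leq 1$ whenever $\lambda>1$, while the decay estimate \eqref{normLp} gives $\|u_{\lambda}(t)\|_{L^{2q}}^{2q}\leq Ct^{-d(q-1/2)}$, an integrable power on $(t_{1},t_{2})$ since $t_{1}>0$. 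Summing the two contributions produces the required bound $M(t_1)$.

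The main obstacle is the convective piece in the critical case $q=1+\tfrac{1}{d}$: the brute bound $\|G_\lambda\ast u_\lambda^q-u_\lambda^q\|_{L^2}\lesssim\|u_\lambda^q\|_{L^2}$ loses a factor of $\lambda$ because $\lambda^{d(1-q)+2}=\lambda$ cannot be absorbed. The gain of one derivative in \eqref{est.G} is precisely what compensates this, which is why the duality must be done against $H^{1}$ (equivalently, the target space must be $H^{-1}$ rather than $L^{2}$). The remark following Theorem \ref{rossi-time} also suggests why this particular choice of dual space is natural for the compactness machinery developed earlier.
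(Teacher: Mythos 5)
Your proof is correct and follows essentially the same route as the paper: split $u_{\lambda,t}$ into the diffusive and convective pieces, transpose the convolution onto the test function via the identities \eqref{id} and \eqref{idG}, control the $J$-piece with Cauchy--Schwarz plus Lemma \ref{est.ariba} (reducing to Lemma \ref{est.2.J} after time integration), and control the $G$-piece with Lemma \ref{lemma5} and the decay estimate \eqref{normLp}, using $\lambda>1$ and $q\geq 1+1/d$ to absorb the scaling factor $\lambda^{d(1-q)+1}$. The only cosmetic difference is that you state the $H^{-1}$ bounds termwise and test against $\psi\in H^1$ directly, whereas the paper tests against $\psi\in C_c^2$ (for which Lemma \ref{lemma5} is stated) and passes to $H^1$ by density; this has no mathematical consequence.
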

\begin{proof}
Multiplying  (\ref{rescal}) by $\psi\in C^2_c( \rr^d)$, integrating over $\rr^d$ and using Lemma \ref{parts} we get
\begin{align*}
\int_{\rr^d} u_{\lambda,t}(t,x) \psi(x)\; dx&= \int_{\rr^d} \lambda^2 (J_{\lambda}\ast u_{\lambda}-u_{\lambda}) \psi(x)\; dx+\int_{\rr^d} \lambda^{d(1-q)+2} (G_{\lambda}\ast u^q_{\lambda}-u^q_{\lambda})\; \psi(x)\; dx\\
&= -\frac {\lambda^2}2\int_{\rr^d} \int _{\rr^d} J_{\lambda}(x-y )( \psi(x) -\psi(y))(u_{\lambda}(t,x)-u_\lambda(t,y))\; dxdy\\
&\quad +\int_{\rr^d} \lambda^{d(1-q)+2} (\widetilde{G}_{\lambda}\ast \psi-\psi)\;  u^q_{\lambda}(t,x)\; dx,
\end{align*}
where $\widetilde G_{\lambda}(x)=G_{\lambda}(-x)$. 
Using Cauchy's inequality, the fact that $\lambda>1$ and $q\geq 1+1/d$ we get
\begin{align*}
\Big|\int_{\rr^d}  u_{\lambda,t}(t,x) \psi(x)\; dx\Big| &\leq  \Big(\frac {\lambda^2}2\int_{\rr^d} \int _{\rr^d} J_{\lambda}(x-y )( \psi(x) -\psi(y))^2dxdy\Big)^{1/2}\\
&\quad \times \Big(
 \frac {\lambda^2}2\int_{\rr^d} \int _{\rr^d} J_{\lambda}(x-y)(u_{\lambda}(t,x)-u_\lambda(t,y))^2\; dxdy\Big)^{1/2}
\\
& \quad \quad +\|\lambda (\widetilde{G}_{\lambda}\ast \psi-\psi)\|_{L^2(\rr^d)}\Big( \int_{\rr^d} |u_{\lambda}(t,x)|^{2q}\; dx\Big)^{1/2}.
\end{align*}
Applying Lemma \ref{est.ariba} to $J_\lambda$ and $\psi$, Lemma \ref{lemma5} to $\tilde G_\lambda$  and estimate \eqref{normLp} to $u_\lambda $ we deduce that
\begin{align}\label{ineq}
\Big|\int_{\rr^d} u_{\lambda,t} &(t,x)\psi(x)dx\Big| \\
\nonumber &\lesssim   \| \psi\|_{H^1(\rr^d)} \Big[
\big( {\lambda^2}\int_{\rr^d} \int _{\rr^d} J_{\lambda}(x-y)(u_{\lambda}(t,x)-u_\lambda(t,y))^2\; dxdy\big)^{1/2} +t^{-\frac d4(2q-1)}\Big]. 
\end{align}
Thus
$$\|u_{\lambda,t}(t)\|_{H^{-1}(\rr^d)}^2\lesssim \lambda^2\int_{\rr^d} \int _{\rr^d} J_{\lambda}(x-y)(u_{\lambda}(t,x)-u_\lambda(t,y))^2\; dxdy+t^{-\frac d2(2q-1)}.$$
Integrating this inequality on the time interval $(t_1,t_2)$ and then applying Lemma  \ref{est.2.J} we obtain the desired result.
\end{proof}

\medskip

\textbf{Step II. Compactness in $L^1_{loc}((0,\infty), L^1(\rr^d))$.} We first establish the compactness
of the family $\{u_\lambda\}$ in $L^1_{loc}((0,\infty)\times \rr^d)$. Using estimates on the tail of $\{u_\lambda\}$ we will obtain  strong convergence in $L^1_{loc}((0,\infty), L^1(\rr^d))$.

 Lemma \ref{linfty-l2} and Lemma \ref{tderivative} give us that $\{u_\lambda\}$ is uniformly bounded in $L^\infty_{loc}((0,\infty), L^2_{loc}(\rr^d))$ and 
$\{\partial _t u_\lambda\}$ is uniformly bounded in $L^2_{loc}((0,\infty), H^{-1}(\Omega))$ for any bounded domain $\Omega$ of $\rr^d$. 
Taking into account that $L^2(\Omega)$ is compactly embedded in $H^{-\eps}(\Omega)$ for any $\eps>0$, and  $H^{-\eps}(\Omega)$ is continuously embedded
in $H^{-1}(\Omega)$ for $0<\eps<1$, by classical compactness arguments (\cite{MR916688}, Corollary 4, p.~85) we deduce that $\{u_\lambda\}$ is relatively compact in 
$C([t_1,t_2], \, H^{-\eps}(\Omega))$ for all $0<t_1<t_2$ and $0<\eps<1$. Extracting a subsequence we get
$$u_{\lambda_n}\rightarrow U\ \text{in}\ C([t_1,t_2], \, H^{-\eps}(\Omega)).$$

Using  estimate \eqref{normLp} we obtain that  for each fixed $t>0$, the family $\{u_{\lambda_n}(t)\}_{n\geq 1}$ is uniformly bounded in $L^p_{loc}(\rr^d)$. Then  any subsequence $\{u_{\lambda_{k_n}}(t)\}_{n\geq 1}$  weakly convergent should converge to $U(t)$. Indeed, if $u_{\lambda_{k_n}}(t) \rightharpoonup v  $ in $L^p(\Omega)$ then $u_{\lambda_{k_n}}(t) \rightharpoonup v  $ in $\mathcal{D}'(\Omega)$ and hence $v=U(t)$. This fact shows that for every $t>0$ and $p\in (1,\infty)$  we have
$$u_{\lambda_n} (t)\rightharpoonup U(t)\quad \text{in }\ L^p_{loc}(\rr^d).$$

The uniform bound given in \eqref{normLp} of $\{u_\lambda(t)\}$ transfers to $U(t)$. 
Hence, the limit 
point $U$  belongs to $L^\infty_{loc}((0,\infty),L^p(\rr^d))$ for all $1< p<\infty$ and moreover
 we get that
\begin{equation}\label{norma.p.U}
\|U(t)\|_{L^p(\rr^d)}\leq \liminf _{\lambda\rightarrow \infty}\|u_\lambda(t) \|_{L^p(\rr^d)}\leq \frac {C}{t^{\frac d2(1-\frac 1p)}}, \quad \forall\ t>0.
\end{equation}
%In the particular case $p=1$, for any $t>0$ we obtain that 
%$\|U(t)\|_{L^1(\rr^d)}\leq m$, $m$ being the mass of the initial data $\varphi$.
% In fact we will prove latter that $U(t)$ inherits from $\{u_\lambda(t)\}$ the fact that is nonnegative and has mass equal to the mass of $\varphi$ for all $t>0$.

Let us now prove the strong convergence in $L^1_{loc}((0,\infty)\times \rr^d)$.
Lemma \ref{linfty-l2}, Lemma \ref{est.2.J} and Lemma \ref{tderivative} show  that for any $0<t_1<t_2<\infty$ 
there exists  $M=M(t_1,\|\varphi\|_{L^1(\rr^d)},\|\varphi\|_{L^{\infty}(\rr^d)})$ such that 
\begin{equation}\label{cond.1}
\|u_\la\|_{L^2((t_1,t_2)\times \rr^d)}\leq M,
\end{equation}
\begin{equation}\label{cond.2}
\la^2\int _{t_1}^{t_2}\int _{\rr^d}\int _{\rr^d} J_\la (x-y)(u_\la(t,x)-u_\la(t,y))^2dxdydt\leq M
\end{equation}
and
\begin{equation}\label{cond.3}
\|u_{\la,t}\|_{L^2((t_1,t_2),H^{-1}(\rr^d))}\leq M.
\end{equation}
%Moreover, for any $p\in [1,\infty)$ there exists $C=C(p,\|\varphi\|_{L^1(\rr^d)}, \|\varphi\|_{L^\infty(\rr^d)} )$ such that 
%the following holds for any $t\geq 0$
%\begin{equation}\label{norma.p}
%\|u_\lambda (t)\|_{L^p(\rr^d)}\leq  \frac{C\lambda ^{d(1-\frac 1p)}}{(1+\lambda^2 t)^{\frac d2 (1-\frac 1p)}}\leq
%\frac{C}{t^{\frac d2(1-\frac 1p)}}.
%\end{equation}

Let us now choose a  function $\rho$ as in Theorem \ref{rossi-time} such that $0\leq \rho\leq J$. In view of hypothesis (H2) this is always possible. Also \eqref{cond.2} holds with function $\rho$ instead of $J$.
We apply Theorem \ref{rossi-time}   to family $\{u_\lambda\}_{\lambda>0}$ and to the time  interval $(t_1,t_2)$. We obtain that there exists a function
$v\in L^2((t_1,t_2),H^1(\rr^d))$ such that, up to a subsequence, 
 $$u_\la\rightarrow v\quad \text{in}\quad L^2((t_1,t_2);L^2_{loc}( \rr^d)).$$ 
 The previous analysis shows that $v=U$. Thus $U\in L^2_{loc}((0,\infty),H^1(\rr^d))$ and, up to a subsequence
  $$u_\la\rightarrow U\quad \text{in}\quad L^1_{loc}((0,\infty)\times  \rr^d).$$ 
  
We now prove that in fact  $u_\lambda$ strongly  converges to $U$ in $L^1_{loc}((0,\infty),\,L^1(\rr^d))$.
Using a standard diagonal argument the compactness in $L^1_{loc}((0,\infty),\,L^1(\rr^d))$  is reduced to the fact that for any $0<t_1<t_2<\infty$ the following holds
\begin{equation}\label{est.fuera}
\int _{t_1}^{t_2} \|u_\lambda (t)\|_{L^1(|x|>R)}dt\rightarrow 0 \quad \text{as}\quad R\rightarrow \infty, \, \text{uniformly in }\, \lambda\geq1. 
\end{equation}
This follows from the  Lemma below  since the initial data $\varphi$ belongs to  $ L^1(\rr^d)$.

\begin{lemma}\label{est.2r}
There exists a constant $C=C(J,G,\|\varphi\|_{L^1(\rr^d)},\|\varphi\|_{L^\infty(\rr^d)})$ such that the following inequality
\begin{equation}\label{int.2r}
\int _{|x|>2R} u_\lambda (t,x)dx\leq \int _{|x|>R}\varphi(x)dx+C(\frac{t}{R^2}+\frac {t^{1/2}}R)
\end{equation}
 holds for any $t>0$ and $R>0$, uniformly on $\lambda\geq 1$.
\end{lemma}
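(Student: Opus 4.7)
The plan is to test the rescaled equation \eqref{rescal} against a smooth radial cutoff concentrated at the tail. Fix once and for all $\psi\in C^2(\rr^d)$ with $0\leq\psi\leq 1$, $\psi\equiv 0$ on $\{|x|\leq 1\}$ and $\psi\equiv 1$ on $\{|x|\geq 2\}$, and set $\psi_R(x):=\psi(x/R)$, so that $\|\nabla\psi_R\|_{\infty}\lesssim R^{-1}$ and $\|D^2\psi_R\|_{\infty}\lesssim R^{-2}$. Multiplying \eqref{rescal} by $\psi_R$, integrating over $\rr^d$, and applying the symmetry identities of Lemma \ref{parts} to move both nonlocal operators onto $\psi_R$, one obtains
\[
\frac{d}{ds}\int_{\rr^d} u_\lambda(s,x)\psi_R(x)\,dx = \int_{\rr^d} u_\lambda\,\lambda^2(J_\lambda\ast\psi_R-\psi_R)\,dx + \lambda^{d(1-q)+2}\int_{\rr^d} u_\lambda^q(\widetilde G_\lambda\ast\psi_R-\psi_R)\,dx.
\]

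I would then use Lemma \ref{lemma5} applied to the smooth function $\psi_R$ to obtain the pointwise bounds $\lambda^2\|J_\lambda\ast\psi_R-\psi_R\|_{\infty}\lesssim R^{-2}$ and $\lambda\|\widetilde G_\lambda\ast\psi_R-\psi_R\|_{\infty}\lesssim R^{-1}$. For the diffusive term, mass conservation $\|u_\lambda(s)\|_{L^1}\leq m$ yields a time integral of order $mt/R^2$. For the convective term, the $L^q$-decay estimate \eqref{decay} applied to $u$ and rescaled gives $\|u_\lambda(s)\|_{L^q}^q \leq C\lambda^{d(q-1)}(\lambda^2 s+1)^{-\alpha}$ with $\alpha:=d(q-1)/2\geq 1/2$, so the integrand is bounded by $C\lambda R^{-1}(\lambda^2 s+1)^{-\alpha}$. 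Changing variables $\tau=\lambda^2 s$ and using the elementary estimate $(\tau+1)^{-\alpha}\leq(\tau+1)^{-1/2}$ (valid for $\alpha\geq 1/2$ and $\tau\geq 0$) reduces this to a multiple of $R^{-1}\sqrt{t}$, uniformly in $\lambda\geq 1$.

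For the initial contribution, since $\psi_R\leq\mathbf{1}_{\{|x|\geq R\}}$, the change of variables $y=\lambda x$ and the hypothesis $\lambda\geq 1$ give $\int \varphi_\lambda\psi_R \leq \int_{|y|\geq\lambda R}\varphi(y)\,dy \leq \int_{|y|\geq R}\varphi(y)\,dy$. Combined with $\int_{|x|>2R}u_\lambda(t,x)\,dx\leq\int u_\lambda(t)\psi_R$ (valid because $u_\lambda\geq 0$) and integrating the above time identity from $0$ to $t$, this yields \eqref{int.2r}. The one delicate point is securing uniformity in $\lambda\geq 1$ of the nonlinear contribution: it is precisely the hypothesis $q\geq 1+1/d$, through $\alpha\geq 1/2$, that lets the change-of-variables step absorb the $\lambda$-factor coming from the rescaled $L^q$-decay and produce the clean bound $C\sqrt{t}/R$ without any residual $\lambda$-dependence.
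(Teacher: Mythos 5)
Your proof is correct and follows essentially the same route as the paper's: test \eqref{rescal} against $\psi_R$, use Lemma~\ref{parts} and Lemma~\ref{lemma5} together with the scaling $\|D^2\psi_R\|_\infty\lesssim R^{-2}$, $\|\nabla\psi_R\|_\infty\lesssim R^{-1}$, control the diffusive term by mass conservation and the convective term by the rescaled $L^q$-decay and a change of variables, and finally compare $\psi_R$ to indicator functions using $u_\lambda\geq 0$ and $\lambda\geq 1$. The only (cosmetic) difference is that you close the nonlinear time-integral by the pointwise inequality $(\tau+1)^{-\alpha}\leq(\tau+1)^{-1/2}$ for $\alpha=d(q-1)/2\geq 1/2$, whereas the paper establishes boundedness of $x\mapsto x^{-1}\int_0^{x^2}(1+s)^{-\alpha}\,ds$ by examining its limits at $0$ and $\infty$; both yield the same $C t^{1/2}/R$ bound uniformly in $\lambda\geq 1$.
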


\begin{proof}
Let $\psi\in C^\infty(\rr^d)$ be such  that $0\leq \psi\leq 1$ and  satisfies $\psi(x)\equiv 0$ for $|x|<1$ and $\psi(x)\equiv 1$ for $|x|>2$. We put $\psi_R(x)=\psi(x/R)$.
We multiply equation \eqref{rescal} by $\psi_R$ and integrate by parts to obtain 
\begin{align*}
\int _{\rr^d} u_\lambda (t,x)\psi_R(x)dx-\int_{\rr^d}\varphi_{\lambda}(x) \psi_R(x)dx=&
\lambda^2\int _0^t \int_{\rr^d} u_\lambda (s,x)  (J_\lambda\ast \psi_R -\psi_R)dxds\\
&+ \lambda^{d(1-q)+2}\int _0^t \int _{\rr^d} u_\lambda^q (s,x)(\widetilde G_\lambda\ast  \psi_R-\psi_R)(x)dxds.
\end{align*}
%where $\widetilde G_\lambda (z)=G_\lambda (-z)$.

We now use  Lemma \ref{lemma5} with $p=\infty$, the fact that 
$$\|D^2(\psi_R)\|_{L^\infty(\rr^d)}=R^{-2}\|D^2\psi\|_{L^\infty(\rr^d)},\quad 
\|\nabla \psi_R\|_{L^\infty(\rr^d)}= R^{-1}\|\nabla \psi\|_{L^\infty(\rr^d)}$$
and the conservation of the $L^1(\rr^d)$-norm of $u_\lambda$
to find that
\begin{align}\label{ineq.uR}
\int _{\rr^d} u_\lambda (t,x)\psi_R(x)dx \lesssim &\int_{\rr^d}\varphi_{\lambda}(x) \psi_R(x)dx+
R^{-2}\|D^2\psi\|_{L^\infty(\rr^d)}\int _0^t \int_{\rr^d} u_\lambda (s,x) dx ds\\
\nonumber &+ \lambda^{d(1-q)+1}R^{-1}\|\nabla \psi\|_{L^\infty(\rr^d)}\int _0^t \int _{\rr^d} u_\lambda^q (s,x)dxds\\
\nonumber\lesssim &\int_{|x|>R}\varphi_{\lambda}(x)dx+
R^{-2}\|D^2 \psi\|_{L^\infty(\rr^d)}t \|\varphi\|_{L^1(\rr^d)}\\
\nonumber&+ \lambda^{d(1-q)+1}R^{-1}\|\nabla \psi\|_{L^\infty(\rr^d)}\int _0^t \int _{\rr^d} u_\lambda^q (s,x)dxds.
\end{align}
To estimate the last term in the above inequality we use the decay of  $u_\lambda$  given by \eqref{normLp} and obtain that
$$\lambda^{d(1-q)+1}\int _0^t \int _{\rr^d} u_\lambda^q (s,x)dxds\lesssim \lambda^{d(1-q)+1} \int _0^t\frac {\lambda ^{d(q-1)}ds}{(1+\lambda^2 s)^{\frac {d(q-1)}2}}
=\lambda^{-1}\int _0^{t\lambda ^2} \frac{ds}{(1+ s)^{\frac {d(q-1)}2}}.$$
Since
% for any $q\geq 1+\frac 1d$
%$$\lim _{x\rightarrow 0} x^{-1}\int _0^{x^2}\frac{ds}{(1+ s)^{\frac {d(q-1)}2}}=0$$
%and
$$ 
\lim _{x\rightarrow \infty} x^{-1}\int _0^{x^2}\frac{ds}{(1+ s)^{\frac {d(q-1)}2}}= \lim _{x\rightarrow \infty} \frac{2x}{(1+ x^2)^{\frac {d(q-1)}2}}=
\left\{
\begin{array}{ll}
0,& q>1+\frac 1d,\\[10pt]
2, & q=1+\frac 1d,
\end{array}
\right.
$$
we find that
\begin{equation}\label{star3}
\lambda^{d(1-q)+1}\int _0^t \int _{\rr^d} u_\lambda^q (s,x)dxds\lesssim Ct^{1/2}.
\end{equation}
Going back to \eqref{ineq.uR}, using that $\lambda>1$ and  $\psi(x)\equiv 1$ for $|x|> 2$ we get 
$$\int _{|x|>2R} u_\lambda (t,x)dx\leq \int _{|x|>\lambda R} \varphi(x)dx+ C(\frac{t}{R^2}+\frac {t^{1/2}}R)
\leq \int _{|x|> R} \varphi(x)dx+ C(\frac{t}{R^2}+\frac {t^{1/2}}R)$$
and the proof of the Lemma is finished.
\end{proof}

In view of \eqref{est.fuera} $\{u_\lambda\}_{\lambda>0}$ is relatively compact in 
$L^1_{loc}((0,\infty),\, L^1(\rr^d))$ so 
$u_\lambda \rightarrow U$ in $L^1_{loc}((0,\infty),\, L^1(\rr^d))$. This result also shows that for a.e. $t>0$ we have
\begin{equation}\label{conv.pointwise.1}
\|u_\lambda(t) -U(t)\|_{L^1(\rr^d)}\rightarrow 0 \quad \text{as} \quad \lambda\rightarrow \infty.
\end{equation}
In particular $\|U(t)\|_{L^1(\rr^d)}=m$, a.e. $t>0$.

This fact will be used in Step IV to obtain the main convergence result of this paper.

\medskip

\textbf{Step IIIa. Passing to the limit.} Using the results obtained  in the previous step we  can pass to the weak limit in the equation involving $u_\lambda$.
Let us choose $0<\tau<t$. For any test function $\psi \in C^\infty_c(\rr^d)$ we multiply equation \eqref{rescal} by $\psi$ and we integrate on $(\tau,t)\times \rr^d$. We get
\begin{align*}
\int_{\rr^d} & u_\lambda(t,x)\psi(x)dx-\int_{\rr^d} u_\lambda(\tau,x)\psi(x)dx\\
=&\int^t_\tau\int_{\rr^d} \lambda^2(J_\lambda\ast u_\lambda-u_\lambda)\psi(x)dxds
+
\lambda^{d(1-q)+2}\int^t_\tau\int_{\rr^d} ( G_\lambda\ast u_\lambda^q -u_\lambda^q)\psi(x)dxds\\
=&\int^t_\tau\int_{\rr^d} \lambda^2(J_\lambda\ast \psi-\psi)u_\lambda(s,x)dxds
+
\lambda ^{d(1-q)+2}\int^t_\tau\int_{\rr^d} (\widetilde G_\lambda\ast \psi-\psi)u^q_\lambda(s,x)dxds.
\end{align*} 
First of all observe that since for any $t>0$,   $u_\lambda(t)\rightharpoonup U(t)$ in $L^p_{loc}(\rr^d)$, $1< p<\infty$, we have
\begin{align*}
\int_{\rr^d}  u_\lambda(t,x)\psi(x)dx-\int_{\rr^d} u_\lambda(\tau, x)\psi(x)dx \rightarrow
\int_{\rr^d}  U(t,x)\psi(x)dx-\int_{\rr^d} U(\tau,x)\psi(x)dx.
\end{align*} 
%Observe that after a change of variables and a Taylor expansion we get
%\begin{align*}
%\lambda^{d+2}\int_{\rr^d} & J(\lambda(x-y))(\psi(y)-\psi(x))dy=\lambda^2 \int _{\rr^d}J(z)\Big(\psi(x-\frac z\lambda)-\psi(x)\Big)dz\\
%&=\lambda^2  \int _{\rr^d}J(z)\Big[ -\frac z\lambda \cdot \nabla \psi(x)+\frac 1{\lambda^2}\int _0^1 (1-s)z D^2\psi (x-\frac{sz}\lambda)z^t ds \Big].
%\end{align*}
%Since $J$ is radially symmetric we have \eqref{moment.J} and \eqref{second.moment}. 
Using identity \eqref{taylor.j} and   the Lebesgue dominated convergence theorem we obtain that
$$\lambda^2(J_\lambda \ast \psi-\psi)(x) \rightarrow \frac 12\sum _{i,j=1}^n \int _{\rr^d}J(z)z_iz_j \frac{\partial^2\psi}{\partial x_i\partial x_j}=
A\Delta \psi(x),$$
where
 $$A=\frac{1}2\int_{\rr^d} J(z)|z|^2dz.$$
Since $u_\lambda \rightarrow U$ in $L^1((\tau,t)\times \rr^d)$ we obtain by using the
Lebesgue  theorem that
 $$\int^t_\tau\int_{\rr^d} \lambda^2(J_\lambda\ast \psi-\psi)u_\lambda(s,x) dxds\rightarrow A\int^t_\tau\int_{\rr^d} \Delta \psi U(s,x)dxds.$$

For the term involving $\tilde G$ we  prove that
\begin{equation}\label{G}
\lambda ^{d(1-q)+2}\int^t_\tau\int_{\rr^d} (\widetilde G_{\lambda}\ast\psi-\psi)u^q_\lambda(s,x)dxds \rightarrow 
\left\{
\begin{array}{ll}
0,& q>1+\frac 1d,\\[10pt]
\int^t_\tau\int_{\rr^d} B\cdot \nabla \psi (x)U^q(s,x)dxds, & q=1+\frac 1d,
\end{array}
\right.
\end{equation}
where
 $$B=(B_1,\dots, B_d),\  B_j=\int_{\rr^d} G(z)z_jdz .$$
 When $q>1+ 1/d$ we use Lemma \ref{lemma5} for $\widetilde G$ and estimate \eqref{normLp} on the $L^p$-norms of $u_\lambda$  
to get
\begin{align*}
\lambda ^{d(1-q)+2}\int^t_\tau \int_{\rr^d} &|\widetilde G_{\lambda}\ast\psi-\psi)(x)|u^q_\lambda(s,x)dxds \\
& \leq  \lambda ^{d(1-q)+1}\|\lambda (\widetilde G_{\lambda}\ast\psi-\psi)\|_{L^{\infty}(\rr^d)}\int^t_\tau\int_{\rr^d}u^q_\lambda(s,x)dxds\\
&\lesssim  \|\nabla \psi\|_{L^{\infty}(\rr^d)} \; \lambda \int _\tau^t \frac {ds}{(1+\lambda^2 s)^{\frac d2(q-1)}}\\
& \lesssim  \|\nabla \psi\|_{L^{\infty}(\rr^d)}\;  \lambda^{1-d(q-1)}\rightarrow 0 \quad \text{as} \;  \lambda \rightarrow \infty.
\end{align*}

Let us now consider the case $q=1+1/d$.
First observe that identity \eqref{taylor.g}  and Lebesgue convergence theorem give us that for any 
$x\in \rr^d$ 
$$\lambda (\tilde G_\lambda\ast \psi -\psi)(x)\rightarrow B\cdot \nabla \psi (x), \quad
\lambda \rightarrow \infty.$$
The results in the previous step and H\"older's inequality give us that for a.e. $t>0$ the following holds
\begin{align*}
\|u_\lambda ^q(t)-U^q(t)\|_{L^1(\rr^d)}&\lesssim \int _{\rr^d}
|u_\lambda (t)-U(t)|^{1/2} (|u_\lambda(t)|^{q-1/2} +|U(t)|^{q-1/2})dx\\
&\lesssim \|u_\lambda (t)-U(t)\|^{1/2}_{L^1(\rr^d)}(\|u_\lambda(t) \|_{L^{2q-1}(\rr^d)}^{\frac{2q-1}2}+
\|U(t) \|_{L^{2q-1}(\rr^d)}^{\frac{2q-1}2})\\
&\lesssim C(t)\|u_\lambda (t)-U(t)\|_{L^1(\rr^d)}\rightarrow 0, \quad \lambda\rightarrow \infty.
\end{align*}
Hence $u_\la^q\rightarrow U^q$ in $L^1((\tau,t), L^1(\rr^d))$ and in consequence the second convergence in \eqref{G} holds.

%For any $p\in [1,\infty)$ we have that
%$$\|u_\lambda(s)\|_{L^p(\rr^d)}\leq C(p,\tau), \quad \forall \ s\geq \tau.$$
%Thus, up to a subsequence, 
%$$u_\lambda^q\rightharpoonup \chi\quad \text{in}\quad L^\infty((\tau,t),L^p(\rr^d)).$$
%Since $u_{\lambda}\rightarrow U$ a.e. on $(\tau,t)\times \rr^d$ we conclude that $\chi=U^q$.
%Using now \eqref{est.G.rest} we obtain that  \eqref{limita2} holds. 

 All the above convergences  show that  $U$  satisfies
\begin{align*}
\int_{\rr^d} & U(t,x)\psi(x)dx-\int_{\rr^d} U(\tau,x)\psi(x)dx\\
&=A\int^t_\tau \int_{\rr^d} U(s,x)\Delta \psi(x)dxds+\alpha\int^t_\tau \int_{\rr^d} U^q(s,x)B\cdot \nabla \psi(x)dxds,
\end{align*}
where $\alpha=1$ if $q=1+1/d$ and $\alpha=0$ for $q>1+1/d$.
Thus, when $q>1+1/d$ or $B\neq 0_{1,d}$, $U$ is a distributional  solution of the heat equation
$U_t=A\Delta U$. When  $q=1+1/d$ and $B\neq 0_{1,d}$, $U$ is a distributional solution of 
the equation:
$
U_t=A\Delta U-B\cdot \nabla (U^q).
$
 
 \medskip
 \textbf{Step IIIb. Identification of the initial data.} Let us choose $\tau=0$ in the previous step. 
 Using estimate \eqref{star3} and the mass conservation of $u_\la$
  for any $\psi\in C_c^2(\rr^d)$ we get
 \begin{align*}
\Big|\int_{\rr^d} & u_\lambda(t,x)\psi(x)dx-\int_{\rr^d} u_\lambda(0,x)\psi(x)dx\Big|\\
&\leq \|D^2\psi\|_{L^\infty(\rr^d)}\int _0^t \int _{\rr^d} u_\lambda (s,x)dxds +\lambda ^{d(1-q)+1}\|D\psi\|_{L^\infty(\rr^d)} \int _0^t \int _{\rr^d} u_\lambda^q(s,x)dxds\\
&\lesssim t  \|D^2\psi\|_{L^\infty(\rr^d)}+ t^{1/2} \|D\psi\|_{L^\infty(\rr^d)} .
\end{align*}
Letting $\lambda\rightarrow \infty$ and using that for any $t>0$, $u_\lambda (t)\rightharpoonup U(t)$ in 
$L^2(\rr^d)$
we get
$$\Big|\int_{\rr^d}  U(t,x)\psi(x)dx-m\psi(0)\Big|\lesssim   t  \|D^2\psi\|_{L^\infty(\rr^d)}+ t^{1/2} \|D\psi\|_{L^\infty(\rr^d)}, \forall\, t>0,$$
where $m$ is the mass of the initial data $\varphi$.
This shows that for any $\psi\in C_c^2(\rr^d)$
$$\lim _{t \downarrow 0}\int_{\rr^d}  U(t,x)\psi(x)dx=m\psi(0).$$
We want to show that this is true for any smooth bounded function $\psi$ and then $U(t)\rightarrow m\delta _0$ in the weak sense of nonnegative measures in $\rr^d$.

Let us now choose  $\psi$ a bounded smooth function. For any $\eps>0$ we choose $\psi_\eps\in C_c^2(\rr^d)$ such that $\|\psi-\psi_\eps\|_{L^\infty(\rr^d)}\leq \eps$. Then
\begin{align*}
\Big|\int _{\rr^d} & U(t,x)\psi(x)\; dx-m\psi(0)\Big| \\
&\leq \Big|\int _{\rr^d}  U(t,x)(\psi(x)-\psi_\eps(x))dx\Big|+m|\psi(0)-\psi_\eps(0)|+\Big | \int_{\rr^d} U(t,x)\psi_\eps(x)dx-m\psi_\eps(0)\Big|\\
&\lesssim 2\eps m+\|\psi_\eps\|_{W^{2,\infty}(\rr^d)}(t+t^{1/2}).
\end{align*}
Thus there exists $t_0=t_0(\eps)$ such that for all $t\in (0,t_0)$ the following holds
$$\Big|\int _{\rr^d}  U(t,x)\psi(x)\; dx-m\psi(0)\Big| \leq 4\eps m.$$
This shows that $U(t)$ goes to $m\delta_0$ as $t\rightarrow 0$ in the sense of measures.

In conclusion the limit point $U$ satisfies
$U\in L^\infty_{loc}((0,\infty), L^1(\rr^d))\cap L^2_{loc}((0,\infty), \, H^1(\rr^d))$.
When $q>1+1/d$ or $B= 0_{1,d}$, $U$
is a solution of the heat equation $U_t=A\Delta U$ with $m\delta _0$ initial data. When  $q=1+1/d$ and $B\neq 0_{1,d}$, $U$ is a solution of 
the equation:
\begin{equation}\label{ubar}
\left\{
\begin{array}{ll}
U_t=A\Delta U-B\cdot \nabla U^q,& x\in \rr^d,t>0,\\[10pt]
U(0)=m\delta_0.
\end{array}
\right.
\end{equation} 
Since for any $\tau>0$ we have $U(\tau)\in L^1(\rr^d)$ classical results on parabolic equations show that for any $\tau>0$
$$U\in C((\tau,\infty), L^1(\rr^d))\cap L^\infty((\tau,\infty) \times \rr^d).$$
Using the fact that  the heat system as well as system \eqref{ubar} have a unique solution (see \cite{MR1233647}, \cite{MR1266100} for complete details)
then  the full  sequence $\{u_\lambda\}$, not only a subsequence, converges to $U$.

\medskip 
\textbf{Step IV. The asymptotic behavior.}
We recall that from Step II we have
$$\|u_\lambda(1)-U(1)\|_{L^1(\rr^d)}\rightarrow 0 \quad \text{as} \ \lambda\rightarrow \infty.$$
After setting $t=\lambda^2$ and using the self-similar form of $U(t,x)$
$$U(t,x)=t^{-d/2}U(1,xt^{-1/2})=t^{-d/2}f_m(x t^{-1/2})$$ we obtain that solution $u$ of system 
\eqref{cd} satisfies
$$\lim _{t\rightarrow \infty} \|u(t)-U(t)\|_{L^1(\rr^d)}=0.$$
This is exactly \eqref{lim.t} in the case $p=1$. The general case, $1\leq p<\infty$, follows immediately since
\begin{align*}
\|u(t)-U(t)\|_{L^p(\rr^d)}& \lesssim \|u(t)-U(t)\|_{L^1(\rr^d)}^{\frac 1{2p-1}}\Big( \|u(t)\|_{L^{2p}(\rr^d)} +\|U(t)\|_{L^{2p}(\rr^d)}  \Big )^{\frac{2p-2}{2p-1}}\\
&\leq \|u(t)-U(t)\|_{L^1(\rr^d)}^{\frac 1{2p-1}} t^{-\frac d2 (1-\frac 1p)}=o(t^{-\frac d2 (1-\frac 1p)}).
\end{align*}
The proof of Theorem \ref{asimp} is now completed.

%\section{Open Problems}\label{open}

\medskip
 {\bf
Acknowledgements.}

L. Ignat was partially supported by Grant PN-II-ID-PCE-2011-3-0075 of the Romanian National Authority for Scientific Research, CNCS -- UEFISCDI.

T. Ignat was  supported by Grant PN-II-ID-PCE-2012-4-0021 of the Romanian National Authority for Scientific Research, CNCS -- UEFISCDI and by a doctoral fellowship offered by IMAR.   

D. Stancu-Dumitru was  partially supported by grant CNCS-UEFISCDI PN-II-ID-PCE-2011-3-0075 {\it Analysis, Control and Numerical Approximations of Partial Differential Equations}. 

Parts of this paper have been done during the visit of the authors at BCAM-Basque Center for Applied Mathematics, Bilbao, Spain. The authors thank the center for hospitality and support. The authors thank J.L. Vazquez, J.D. Rossi and M. Escobedo for fruitful discussions.

\bibliographystyle{plain}
\bibliography{biblio}

\begin{thebibliography}{10}

\bibitem{MR1032626}
J.~Aguirre, M.~Escobedo, and E.~Zuazua.
\newblock Self-similar solutions of a convection diffusion equation and related
  semilinear elliptic problems.
\newblock {\em Comm. Partial Differential Equations}, 15(2):139--157, 1990.

\bibitem{1214.45002}
Fuensanta Andreu-Vaillo, Jos\'e~M. Maz\'on, Julio~D. Rossi, and J.~Juli\'an
  Toledo-Melero.
\newblock {\em {Nonlocal diffusion problems.}}
\newblock {Mathematical Surveys and Monographs 165. Providence, RI: American
  Mathematical Society (AMS); Madrid: Real Sociedad Matem\'atica Espa\~nola.
  xv, 256~p. \$~82.00 }, 2010.

\bibitem{1103.46310}
Jean Bourgain, Haim Brezis, and Petru Mironescu.
\newblock {Another look at Sobolev spaces.}
\newblock {Menaldi, Jos\'e Luis (ed.) et al., Optimal control and partial
  differential equations. In honour of Professor Alain Bensoussan's 60th
  birthday. Proceedings of the conference, Paris, France, December 4, 2000.
  Amsterdam: IOS Press; Tokyo: Ohmsha. 439-455 (2001).}, 2001.

\bibitem{MR1440033}
A.~Carpio.
\newblock Large time behaviour in convection-diffusion equations.
\newblock {\em Ann. Scuola Norm. Sup. Pisa Cl. Sci. (4)}, 23(3):551--574, 1996.

\bibitem{MR2257732}
E.~Chasseigne, M.~Chaves, and J.~D. Rossi.
\newblock Asymptotic behavior for nonlocal diffusion equations.
\newblock {\em J. Math. Pures Appl. (9)}, 86(3):271--291, 2006.

\bibitem{MR2888353}
Qiang Du, James~R. Kamm, R.~B. Lehoucq, and Michael~L. Parks.
\newblock A new approach for a nonlocal, nonlinear conservation law.
\newblock {\em SIAM J. Appl. Math.}, 72(1):464--487, 2012.

\bibitem{MR2924787}
Renjun Duan, Lizhi Ruan, and Changjiang Zhu.
\newblock Optimal decay rates to conservation laws with diffusion-type terms of
  regularity-gain and regularity-loss.
\newblock {\em Math. Models Methods Appl. Sci.}, 22(7):1250012, 39, 2012.

\bibitem{0762.35011}
M.~Escobedo and E.~Zuazua.
\newblock {Large time behavior for convection-diffusion equations in
  $\mathbf{R}^N$.}
\newblock {\em J. Funct. Anal.}, 100(1):119--161, 1991.

\bibitem{MR1233647}
Miguel Escobedo, Juan~Luis V{\'a}zquez, and Enrike Zuazua.
\newblock Asymptotic behaviour and source-type solutions for a
  diffusion-convection equation.
\newblock {\em Arch. Rational Mech. Anal.}, 124(1):43--65, 1993.

\bibitem{MR1266100}
Miguel Escobedo, Juan~Luis V{{\'a}}zquez, and Enrike Zuazua.
\newblock A diffusion-convection equation in several space dimensions.
\newblock {\em Indiana Univ. Math. J.}, 42(4):1413--1440, 1993.

\bibitem{MR2445437}
Loukas Grafakos.
\newblock {\em Classical {F}ourier analysis}, volume 249 of {\em Graduate Texts
  in Mathematics}.
\newblock Springer, New York, second edition, 2008.

\bibitem{MR3190994}
Liviu~I. Ignat and Ademir~F. Pazoto.
\newblock Large time behaviour for a nonlocal diffusion - convection equation
  related with gas dynamics.
\newblock {\em Discrete Contin. Dyn. Syst.}, 34(9):3575--3589, 2014.

\bibitem{pozo-ignat}
Liviu~I. Ignat and Alejandro Pozo.
\newblock A splitting method for the augmented burgers equation.
\newblock in preparation.

\bibitem{MR2356418}
Liviu~I. Ignat and Julio~D. Rossi.
\newblock A nonlocal convection-diffusion equation.
\newblock {\em J. Funct. Anal.}, 251(2):399--437, 2007.

\bibitem{MR2460931}
Liviu~I. Ignat and Julio~D. Rossi.
\newblock Refined asymptotic expansions for nonlocal diffusion equations.
\newblock {\em J. Evol. Equ.}, 8(4):617--629, 2008.

\bibitem{MR2542582}
Liviu~I. Ignat and Julio~D. Rossi.
\newblock Decay estimates for nonlocal problems via energy methods.
\newblock {\em J. Math. Pures Appl. (9)}, 92(2):163--187, 2009.

\bibitem{MR2557678}
Liviu~I. Ignat and Julio~D. Rossi.
\newblock Asymptotic expansions for nonlocal diffusion equations in
  {$L^q$}-norms for {$1\leq q\leq 2$}.
\newblock {\em J. Math. Anal. Appl.}, 362(1):190--199, 2010.

\bibitem{MR3155537}
Tatiana~I. Ignat.
\newblock Asymptotics for nonlocal evolution problems by scaling arguments.
\newblock {\em Differ. Equ. Appl.}, 5(4):613--626, 2013.

\bibitem{tesis-tatiana}
Tatiana~I. Ignat.
\newblock {\em Analytic and stochastic methods for nonlocal operators, PhD
  Thesis}.
\newblock IMAR, Bucharest, Romania, in preparation.

\bibitem{MR2739418}
Grzegorz Karch and Kanako Suzuki.
\newblock Spikes and diffusion waves in a one-dimensional model of chemotaxis.
\newblock {\em Nonlinearity}, 23(12):3119--3137, 2010.

\bibitem{MR0267257}
S.~N. Kru{\v{z}}kov.
\newblock First order quasilinear equations with several independent variables.
\newblock {\em Mat. Sb. (N.S.)}, 81 (123):228--255, 1970.

\bibitem{MR2138795}
Philippe Lauren{\c{c}}ot.
\newblock Asymptotic self-similarity for a simplified model for radiating
  gases.
\newblock {\em Asymptot. Anal.}, 42(3-4):251--262, 2005.

\bibitem{MR1201239}
P.-L. Lions, B.~Perthame, and E.~Tadmor.
\newblock A kinetic formulation of multidimensional scalar conservation laws
  and related equations.
\newblock {\em J. Amer. Math. Soc.}, 7(1):169--191, 1994.

\bibitem{MR2041005}
Augusto~C. Ponce.
\newblock An estimate in the spirit of {P}oincar{\'e}'s inequality.
\newblock {\em J. Eur. Math. Soc. (JEMS)}, 6(1):1--15, 2004.

\bibitem{MR571048}
M.~Schonbek.
\newblock Decay of solutions to parabolic conservation laws.
\newblock {\em Comm. Partial Differential Equations}, 5(5):449--473, 1980.

\bibitem{MR856876}
M.~Schonbek.
\newblock Uniform decay rates for parabolic conservation laws.
\newblock {\em Nonlinear Anal.}, 10(9):943--956, 1986.

\bibitem{MR1356749}
M.~Schonbek.
\newblock The {F}ourier splitting method.
\newblock In {\em Advances in geometric analysis and continuum mechanics
  (Stanford, CA, 1993)}, pages 269--274. Int. Press, Cambridge, MA, 1995.

\bibitem{MR2103702}
Denis Serre.
\newblock {$L^1$}-stability of nonlinear waves in scalar conservation laws.
\newblock In {\em Evolutionary equations. {V}ol. {I}}, Handb. Differ. Equ.,
  pages 473--553. North-Holland, Amsterdam, 2004.

\bibitem{MR916688}
Jacques Simon.
\newblock Compact sets in the space {$L^p(0,T;B)$}.
\newblock {\em Ann. Mat. Pura Appl. (4)}, 146:65--96, 1987.

\bibitem{MR2748435}
Joana Terra and Noemi Wolanski.
\newblock Asymptotic behavior for a nonlocal diffusion equation with absorption
  and nonintegrable initial data. {T}he supercritical case.
\newblock {\em Proc. Amer. Math. Soc.}, 139(4):1421--1432, 2011.

\bibitem{MR2805821}
Joana Terra and Noemi Wolanski.
\newblock Large time behavior for a nonlocal diffusion equation with absorption
  and bounded initial data.
\newblock {\em Discrete Contin. Dyn. Syst.}, 31(2):581--605, 2011.

\bibitem{MR1977429}
Juan~Luis V{\'a}zquez.
\newblock Asymptotic beahviour for the porous medium equation posed in the
  whole space.
\newblock {\em J. Evol. Equ.}, 3(1):67--118, 2003.
\newblock Dedicated to Philippe B{\'e}nilan.

\end{thebibliography}
\end{document}